\newtheorem{theorem}{Theorem}[section]
\newtheorem{lemma}[theorem]{Lemma}
\newtheorem{proposition}[theorem]{Proposition}
\newtheorem{problem}[theorem]{Problem}
\newtheorem{corollary}[theorem]{Corollary}
\newtheorem{remark}[theorem]{Remark}
\theoremstyle{definition}
\newtheorem{definition}[theorem]{Definition}
\newtheorem{example}[theorem]{Example}
\newcommand{\A}{\mathcal A}
\newcommand{\C}{\mathbb C}
\newcommand{\rng}{{\rm rng} \hspace{0.03cm}}
\newcommand{\Cis}{{\rm Cis} \hspace{0.03cm}}
\newcommand{\Fin}{ {\rm \bf Fin} \hspace{0.03cm}}
\newcommand{\Cv}{\mathcal{C}_{v}}
\newcommand{\GN}{\mathbf{G}\hspace{-0.07cm}\mathbf{N}}
\author{Jacek Marchwicki}
\address{
Faculty of Mathematics and Computer Science,
University of Warmia and Mazury in Olsztyn,
S\l oneczna 54,
10-710 Olsztyn,
Poland}
\email {jacek.marchwicki@uwm.edu.pl}
\author{B\l{}a\.zej \.Zmija}
\address{Charles University, Faculty of Mathematics and Physics, Department of Mathematical Analysis, and Department of Algebra, Sokolov\-sk\' a 83, 18600 Praha~8, Czech Republic \newline
and Institute of Mathematics of the Polish Academy of Sciences, ul. \'{S}niadeckich 8, 00-656 Warszawa, Poland}
\email {blazej.zmija@gmail.com}
\title[Bounded ranges of cardinal functions]{Bounded ranges of cardinal functions}
\thanks{The second author was supported by Czech Science Foundation (GA\v CR) grant 21-00420M, Charles University Research Centre program UNCE/SCI/022, and PRIMUS/25/SCI/017.}
\subjclass[2010]{Primary: 40A05; Secondary: 11K31} 
\keywords{purely atomic measure, achievement set, set of subsums, absolutely convergent series}
\begin{document}

\begin{abstract}
Let $\mathbf{x}$ be a (non-empty) sequence of positive real numbers. Its achievement set $\mathcal{\mathbf{x}}$ is the set of all the possible sums of the elements of $\mathbf{x}$. The cardinal function of $\mathbf{x}$ is the function $f:\mathcal{A}(\mathbf{x}) \to \mathbb{N}\cup\{\omega,\mathfrak{c}\}$ that for every $x\in\mathbb{A}(\mathbf{x})$ the value $f(x)$ is equal to the number of ways $x$ is represented as a sum of elements of $\mathbf{x}$.

In this paper we consider possible ranges of cardinal functions of sequences $\mathbf{x}$. We present some general constructions and several criteria that a set has to satisfy in order to be a range of a cardinal function.

We put special attention to the case of sets with maximal element equal to $6$. In this case, in particular, we obtained a full characterisation of sets that are ranges of cardinal functions of interval-filling sequences.

\end{abstract}

\maketitle 
\section{Introduction}
By the achievement set of a sequence $\mathbf{x}=(x_n)$ we mean the set of all subsums of the series $\sum_{n=1}^{\infty}x_n$, that is, 
\begin{align*}
\mathcal{A}(\mathbf{x})=\left\{\ \sum_{n=1}^{\infty}\varepsilon_n x_n \ \Bigg|\  (\varepsilon_n)\in\{0,1\}^\mathbb{N}\ \right\}=\left\{\ \sum_{n\in A}x_n \ \Bigg|\ A\subseteq\mathbb{N} \ \right\}. 
\end{align*}
The cardinal function $f$ for the sequence $\mathbf{x}$ (or equivalently for the achievement set $\mathcal{A}(\mathbf{x})$) counts the number of representations as subsums for each element of the achievement set. That is, $f: \mathcal{A}(\mathbf{x})\rightarrow \{1,2,\ldots,n,\ldots,\omega,\mathfrak{c}\}$ and for any $x\in \mathcal{A}(\mathbf{x})$ we define 
\begin{align*}
f(x):= \# \left\{\ (\varepsilon_n)\in\{0,1\}^\mathbb{N}\ \Bigg|\ \sum_{x_n\neq 0}\varepsilon_n x_n =x \ \right\},
\end{align*}
where the summation is over those $x_n$'s which are non-zero. We will sometimes write $f_{\textbf{x}}$ if it is not clear, which sequence we are referring to.

Note that if $\mathbf{x}=(x_n)$ is eventually zero, then $\mathcal{A}(\mathbf{x})$ is finite and $f(x)<\omega$ for any $x\in \mathcal{A}(\mathbf{x})$. By the boundedness of $f$ we mean that there exists a natural number $N$ such that $f(x)\leq N$ for all $x$. There are two types of unboundedness: first, when $f$ attains infinite value $\omega$ or $\mathfrak{c}$ and the second, when $f$ achieves infinitely many distinct natural numbers.  
We will also use the notation $f_k$ for the cardinal function for $\mathcal{A}((x_n)_{n\geq k})$.  Cardinal functions were introduced in \cite{cardfun}.
 
 The first paper where achievement sets are considered is that of Kakeya \cite{Kakeya}. The author claimed that if $(x_n)\in l_1$ then $\mathcal{A}(x_n)$ is a finite set, a finite union of closed intervals or a Cantor set. However, due to \cite{GN} and \cite{NS0}, we know that there is one more possibility: so called Cantovals. These are compact subsets of $\mathbb{R}$ with dense interiors. Note that first known examples of Cantorvals were given in the papers \cite{F} and \cite{WS}.

The shape of a cardinal function may depend on one of the four classes to which the achievement set belongs. Therefore, throughout the paper we will need the following notation:
\begin{align*}
\mathcal{I} & := \big\{\ \textrm{possible ranges of cardinal functions obtained for the achievements sets being unions of } \\
& \ \ \ \ \textrm{ closed intervals}\ \big\}, \\
\Cv & := \big\{\ \textrm{possible ranges of cardinal functions obtained for the achievements sets being cantorvals}\ \big\}, \\
\mathcal{C} & := \big\{\ \textrm{possible ranges of cardinal functions obtained for the achievements sets being Cantor sets}\ \big\}, \\
\mathcal{F} & := \big\{\ \textrm{possible ranges of cardinal functions obtained for the achievements sets being finite sets}\ \big\}.
\end{align*}
The subset of $\mathcal{I}$ corresponding to those $f$ whose domain (an achievement set) is a single close interval will be denoted by $\mathcal{I}_{1}$.

Let us recall some known facts about cardinal functions. Most of the results come from the paper \cite{cardfun}. For example, it was proved that $\mathcal{F}\subseteq \mathcal{C}$, namely for any finite achievement set its counterpart Cantor set with the same range of cardinal function was constructed. Moreover if $M\in \mathcal{F}$, then $M\cup\{\mathfrak{c}\}\in \mathcal{I}$. 

Let us denote
\begin{align*}
\mathcal{R}:=\mathcal{I}\cup \Cv\cup \mathcal{C} \cup \mathcal{F},
\end{align*}
that is, the set of all the possible ranges of cardinal functions of summable sequences. It is easy to see that $\mathcal{R}\subsetneq \mathcal{P}(\{1,2,\ldots,n,\ldots,\omega,\mathfrak{c}\})\setminus\{\emptyset\}$, where $\mathcal{P}(X)$ is the power set of $X$. Indeed, for any summable sequences $(x_n)$ the maximal and minimal elements of $\mathcal{A}(x_n)$ are obtained uniquely. Thus for each $M\in \mathcal{R}$ we have $1\in M$.
The more complicated question is whether the equality
\begin{align*}
\mathcal{R}=\big\{\ \{1\}\cup M\ \big|\ M\in\mathcal{P}(\{2,\ldots,n,\ldots,\omega,\mathfrak{c}\}) \ \big\}
\end{align*}
holds, since we do not know any nontrivial method of showing that a given set is not in $\mathcal{R}$.

It is also known that $\{1\}\in \mathcal{C}\setminus(\mathcal{I}\cup \Cv)$, that is, the case when each point in achievement set is obtained by a unique representation is reserved for Cantor sets (and for $\mathcal{F}$ in the more trivial sense of finite subsums). Indeed, note that the mapping $\{0,1\}^{\mathbb{N}} \rightarrow \mathcal{A}((x_n))$ defined as $(\varepsilon_n)\mapsto \sum_{n=1}^{\infty}\varepsilon_n x_n$ is continuous on the compact domain. If $\rng (f)$ denotes the range of $f$, then the equality $\rng(f)=\{1\}$ means that $f$ is a bijection and hence a homeomorphism. Thus the image $\mathcal{A}((x_n))$ is homeomorphic to the Cantor space $\{0,1\}^{\mathbb{N}}$. 

There are still some unsolved problems in the area. For instance, the question: \emph{does $\omega\in M$ imply that $\mathfrak{c}\in M$?} remains unanswered. The method of proving that some set belongs to $\mathcal{R}$ are not well developed. From the few of them the most important is that if $M\in \mathcal{R}$ and $L\in \mathcal{F}$, then $M\cdot L\in \mathcal{R}$. In particular, this implies that $M\cup 2M = \{1,2\}\cdot M\in \mathcal{R}$, and similarly $M\cup 3M\in \mathcal{R}$,  $M\cup 4M\cup 6M\in \mathcal{R}$ or in general $\bigcup_{j=0}^{p} {p\choose j}M\in \mathcal{R}$ for each $p\in\mathbb{N}$. The problem whether the conditions $M\in \mathcal{R}$ and $L\in \mathcal{R}$ imply that $M\cdot L\in \mathcal{R}$ remains open. 

Probably the simplest set about which we still do not know if it belongs to $\mathcal{R}$ is $\{1,4\}$. Note that the sets $\{1,2\}$,  $\{1,3\}$, $\{1,2,3\}$, $\{1,2,3,4\}$, $\{1,2,4\}$, $\{1,3,4\}$  are all known to be the elements of $\mathcal{R}$.
Another known fact is that if $M\in \mathcal{R}$, then $M\cup\{2\}\in \mathcal{R}$. 


The paper is organised as follows. In Section 2 we present some basic properties. Section 3 is dedicated to answering the questions \emph{does $\{1,4\}\in \mathcal{I}_1$?} and more general \emph{does $\{1,m\}\in \mathcal{I}_1$ for $m\in\mathbb{N}$?} by using the concept of lockers \cite{DJK}. In Section 4 we discuss how the upper bound of cardinal function is connected with Kakeya conditions \cite{MarMis}. Then in Section 5 we introduce a useful tool called Cardinality Invariant Sets which help us to recognize unbounded ranges. Section 6 is dedicated to distinguish $\mathcal{I}$ from $\mathcal{I}_1$ by showing that $\mathcal{I}\setminus \mathcal{I}_1$ is nonempty. In Section 7 we obtain more results about $\mathcal{I}_1$, which allow us to complete the characterization of ranges bounded by $6$. Sections 8 and 9 are dedicated to studying  $\mathcal{F}$, $\mathcal{C}$ and  $\Cv$ respectively. In Section 10 we briefly summarize the results by presenting a table showing which sets $M$ with $\max M\leq 6$ belong to sets $\mathcal{I}$, $\Cv$, $\mathcal{\C}$, $\mathcal{F}$ and $\mathcal{R}$. Filling this table is the reason, why we write down many corollaries specifically for sets $M$ with $\max M\leq 6$.

\section*{Convention}

By $\mathbb{N}$ we mean the set of positive integers.

The symbol $(x_{n})$ always means $(x_{n})_{n=1}^{\infty}$. If the set if indices is different than $\mathbb{N}$, we write it explicitly.

In the rest of the paper we consider sequences of positive terms since by \cite[Section 2]{cardfun} we know that the ranges of cardinal functions for $(x_n)$ and $(\vert x_n\vert)$ are the same (up to a shift of the domain which does not affect their ranges).

Moreover, we assume all the sequences appearing in the paper to be summable. In particular, the order of terms of a considered sequence does not affect the shape of its achievement set and cardinal function. Therefore, we can further assume, that all the sequences $(x_{n})$ are nonincreasing:
\begin{align}\label{IneqAssumption}
x_{1}\geq x_{2} \geq x_{3} \geq \ldots > 0.
\end{align}

We will use the above assumptions frequently in the paper without further reference, except when mentioned differently.

\section{General properties}

In this section, we study some preliminary properties of sets $\mathcal{F}$, $\mathcal{I}$, $\mathcal{C}$ and $\Cv$. Recall, that we consider (without loss of generality) only sequences $(x_{n})$ with nonnegative terms.

At first, let us introduce the following notation. For $\mathcal{X},\mathcal{Y}\in\mathcal{P}(\mathbb{R})$ we define
\begin{align*}
\mathcal{X}\cdot \mathcal{Y} := \big\{\ X\cdot Y\ \big|\ X\in\mathcal{X},\ Y\in\mathcal{Y} \ \big\},
\end{align*}
where
\begin{align*}
X\cdot Y:= \big\{\ xy\ \big|\ x\in X,\ y\in Y\ \big\}.
\end{align*}

\begin{lemma}\label{dolozskonczony}
The following equalities are true: 
\begin{align*}
\mathcal{F}\cdot \mathcal{F}=\mathcal{F}, \hspace{1cm} \mathcal{F}\cdot \mathcal{C}=\mathcal{C}, \hspace{1cm} \mathcal{F}\cdot \Cv=\Cv, \hspace{1cm} \mathcal{F}\cdot \mathcal{I}=\mathcal{I}.
\end{align*}
\end{lemma}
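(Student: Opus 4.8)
The plan is to prove the four equalities by showing, in each case, that $\mathcal{F}\cdot\mathcal{X}\subseteq\mathcal{X}$ for $\mathcal{X}\in\{\mathcal{F},\mathcal{C},\Cv,\mathcal{I}\}$; the reverse inclusions are immediate, since $\{1\}\in\mathcal{F}$ (witnessed by $\mathbf{x}=(1,0,0,\dots)$, whose achievement set is $\{0,1\}$) and $\{1\}\cdot M=M$ for every range $M$. For the non-trivial direction I would use an explicit ``disjoint concatenation'' of a finite sequence with an arbitrary one.

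Fix $L\in\mathcal{F}$, realised as $L=\rng(g)$ for the cardinal function $g$ of a finite sequence $\mathbf{y}=(y_1,\dots,y_k)$ with $\mathcal{A}(\mathbf{y})=\{b_0<b_1<\dots<b_p\}$, and fix $M\in\mathcal{X}$, realised as $M=\rng(f)$ for the cardinal function $f$ of a sequence $\mathbf{x}=(x_n)$ whose achievement set has the type associated with $\mathcal{X}$. Rescaling $\mathbf{x}$ by a positive constant changes neither $\rng(f)$ nor the topological type of $\mathcal{A}(\mathbf{x})$, so I may assume $s:=\sum_{n}x_n<\delta$, where $\delta:=\min_{0\le j<p}(b_{j+1}-b_j)>0$. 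Let $\mathbf{z}$ be the (summable) sequence obtained by concatenating the terms of $\mathbf{y}$ and of $\mathbf{x}$; it may be reordered to be nonincreasing, which is harmless.

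There are two points to verify. First, $\mathcal{A}(\mathbf{z})=\bigcup_{j=0}^{p}\big(b_j+\mathcal{A}(\mathbf{x})\big)$, and this is a \emph{disjoint} union, because $b_j+\mathcal{A}(\mathbf{x})\subseteq[b_j,b_j+s]$ and $s<\delta\le b_{j+1}-b_j$. Second, for $0\le j\le p$ and $x\in\mathcal{A}(\mathbf{x})$, any subsum of $\mathbf{z}$ equal to $b_j+x$ is the sum of a subsum of $\mathbf{y}$ (lying in $\{b_0,\dots,b_p\}$) and a subsum of $\mathbf{x}$ (lying in $[0,s]$); the separation just observed forces the former to equal $b_j$ and the latter to equal $x$. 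Counting the two choices independently gives
\begin{align*}
f_{\mathbf{z}}(b_j+x)=g(b_j)\cdot f(x),
\end{align*}
an identity valid in cardinal arithmetic even when $f(x)\in\{\omega,\mathfrak{c}\}$, since a nonzero finite cardinal times $\omega$, resp.\ $\mathfrak{c}$, equals $\omega$, resp.\ $\mathfrak{c}$. Passing to ranges and using $\{g(b_j):0\le j\le p\}=L$ yields $\rng(f_{\mathbf{z}})=L\cdot M$.

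It then remains to check that $\mathcal{A}(\mathbf{z})$ has the same type as $\mathcal{A}(\mathbf{x})$: being a disjoint union of finitely many translates of $\mathcal{A}(\mathbf{x})$, it is finite, a finite union of closed intervals, a Cantor set, or a cantorval precisely when $\mathcal{A}(\mathbf{x})$ is — a finite disjoint union of Cantor sets is again compact, perfect and totally disconnected, and a finite disjoint union of cantorvals is again compact, equal to the closure of its interior, and not a finite union of intervals (alternatively one invokes the classification of achievement sets recalled in the Introduction). This gives $L\cdot M\in\mathcal{X}$. The main obstacle is really just the bookkeeping in the product formula for $f_{\mathbf{z}}$: one must be sure that the gap condition $s<\delta$ genuinely prevents a subsum of $\mathbf{z}$ equal to $b_j+x$ from using $\mathbf{y}$-terms whose total is some $b_{j'}\neq b_j$. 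Everything else — the reverse inclusions, the rescaling reduction, and the stability of the four topological types under finite disjoint unions — is routine.
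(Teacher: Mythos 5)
Your proposal is correct and follows essentially the same route as the paper: concatenate a finite sequence realising $L$ with a sequence realising $M$ after rescaling so that the translates $b_j+\mathcal{A}(\mathbf{x})$ are pairwise disjoint, deduce the product formula for the cardinal function, and note that adjoining finitely many terms preserves the topological type of the achievement set. The only (immaterial) difference is that you shrink $\mathbf{x}$ while the paper enlarges $\mathbf{y}$ by a factor $a$ with $a\delta>\sum_n x_n$.
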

\begin{proof}
Since $\{1\}\in \mathcal{F}$, the inclusions '$\supseteq$' in all four equalities are obvious. 
Let $A\in \mathcal{R}$, $B\in \mathcal{F}$. 
Let $\textbf{x}=(x_n)$ be a sequence (finite or not) with the sum $x=\sum x_n$ and the cardinal function $f_{\textbf{x}}$ such that $\rng(f_{\textbf{x}})=A$. Let $\textbf{y}=(y_n)_{n=1}^{k}$ be a finite sequence with the cardinal function $f_{\textbf{y}}$ satisfying $\rng(f_{\textbf{y}})=B$. Let us write $\mathcal{A}(\textbf{y})=\{\sigma_0<\sigma_1<\ldots<\sigma_p\}$ and let $\delta:=\min\{\sigma_i-\sigma_{i-1} \mid i\in\{1,\ldots,p\}\}$.  Then one can find $a\in\mathbb{R}$ such that $a\delta > x$. Let $\tau_{i}:=a\sigma_{i}$. Then $\mathcal{A}(a\textbf{y})=\{\tau_0<\tau_1<\ldots<\tau_p\}$ and  $\min\{\tau_i-\tau_{i-1} \mid i\in\{1,\ldots,p\}\}=a\delta$. Multiplication by a non-zero constant does not change the range of cardinal function so $\rng(f_{a\textbf{y}})=B$.

Let $z_n=ay_n$ for $n\leq k$  and $z_{n+k}=x_n$ for every $n\in\mathbb{N}$. In other words, $\textbf{z}:=(z_n)=(ay_n)\cup(x_n)$. Then $\mathcal{A}(\textbf{z})=\mathcal{A}(a\textbf{y})+\mathcal{A}(\textbf{x})=\bigcup_{i=0}^{p} (\tau_i + \mathcal{A}(\textbf{x}))$ and $\rng(f_{\textbf{z}})=\rng(f_{a\textbf{y}})\cdot \rng(f_{\textbf{x}})=B\cdot A$.

In order to finish the proof let us note that adding finitely many terms to a sequence does not change the form of its achievement set (finite set, finite union of closed intervals, Cantor, Cantorval). Thus $\mathcal{A}(\textbf{z})$ has the same form as $\mathcal{A}(\textbf{x})$, so $B\cdot A$ belongs to the same set among $\mathcal{F}$, $\mathcal{I}$, $\mathcal{C}$, $\Cv$, as $A$.
\end{proof}

\begin{problem}
Is $\mathcal{R}$ closed under multiplication, that is, does $\mathcal{R}\cdot \mathcal{R}= \mathcal{R}$?
\end{problem}

In \cite[Proposition 4.20]{cardfun} the authors considered the sequence $(x_n)$ in which terms do not repeat and proved that it is the only way to obtain $\{1,4\}$ as range of a cardinal function. In the next proposition we consider the opposite case.

\begin{proposition}\label{powtorzonywyraz}
Let $\mathbf{x}=(x_n)$ be such that $\mathcal{A}(\mathbf{x})$ is neither a Cantor set nor a finite set and $x_k=x_{k+1}$ for some $k\in\mathbb{N}$. Then $\max \rng(f)\geq 4$.
\end{proposition}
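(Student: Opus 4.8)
The plan is to isolate the repeated pair $x_k=x_{k+1}=:a$ and compare $\mathbf{x}$ with the sequence $\mathbf{x}'$ obtained from $\mathbf{x}$ by deleting these two terms. Since $\mathcal{A}(\mathbf{x})$ is neither finite nor a Cantor set, it is a finite union of nondegenerate closed intervals or a cantorval, so it has nonempty interior. The contribution of the pair $x_k,x_{k+1}$ to a subsum is $0$, $a$ or $2a$, whence $\mathcal{A}(\mathbf{x})=\bigcup_{j=0}^{2}\big(ja+\mathcal{A}(\mathbf{x}')\big)$ is a finite union of translates of the compact set $\mathcal{A}(\mathbf{x}')$. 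A closed set with empty interior is nowhere dense, and a finite union of translates of a nowhere dense set is nowhere dense; since $\mathcal{A}(\mathbf{x})$ is not nowhere dense, neither is $\mathcal{A}(\mathbf{x}')$, i.e.\ $\mathcal{A}(\mathbf{x}')$ also has nonempty interior. (Alternatively, one may invoke the fact recalled in the proof of Lemma~\ref{dolozskonczony} that deleting finitely many terms preserves the form of the achievement set.) In particular $\mathcal{A}(\mathbf{x}')$ is an infinite achievement set that is not a Cantor set, hence a finite union of closed intervals or a cantorval.

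Next I would record the exact relation between the two cardinal functions. Splitting a representation of $y$ with respect to $\mathbf{x}$ according to the values of $\varepsilon_k,\varepsilon_{k+1}\in\{0,1\}$ (there is one way to get the offset $0$, two ways to get the offset $a$, and one way to get $2a$) gives, for every $y\in\mathbb{R}$,
\[
f_{\mathbf{x}}(y)=f_{\mathbf{x}'}(y)+2\,f_{\mathbf{x}'}(y-a)+f_{\mathbf{x}'}(y-2a),
\]
where $f_{\mathbf{x}'}$ is extended by $0$ outside $\mathcal{A}(\mathbf{x}')$. Consequently, if some $t\in\mathcal{A}(\mathbf{x}')$ has $f_{\mathbf{x}'}(t)\geq 2$, then $y:=t+a$ satisfies $f_{\mathbf{x}}(y)\geq 2\,f_{\mathbf{x}'}(t)\geq 4$, which is exactly what we need.

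It therefore remains to produce a point of $\mathcal{A}(\mathbf{x}')$ with at least two representations, that is, to show $\rng(f_{\mathbf{x}'})\neq\{1\}$. This is precisely where the shape of $\mathcal{A}(\mathbf{x}')$ is used: as recalled in the Introduction, $\{1\}\in\mathcal{C}\setminus(\mathcal{I}\cup\Cv)$ — if every element of an achievement set had a unique representation, then the natural map $\{0,1\}^{\mathbb{N}}\to\mathcal{A}(\mathbf{x}')$ would be a continuous bijection from a compact space onto a Hausdorff space, hence a homeomorphism, forcing $\mathcal{A}(\mathbf{x}')$ to be homeomorphic to the Cantor space and in particular to have empty interior, a contradiction. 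So, since $\mathcal{A}(\mathbf{x}')$ is a finite union of intervals or a cantorval, its range is not $\{1\}$; choosing $t$ with $f_{\mathbf{x}'}(t)\geq 2$ and applying the displayed identity finishes the proof. I do not expect a real obstacle here: the argument is a reduction, and the only points requiring care are checking that deleting the two equal terms preserves nonempty interior (handled by the translate decomposition above) and correctly bookkeeping the multiplicity $2$ in the identity for $f_{\mathbf{x}}$.
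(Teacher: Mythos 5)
Your proof is correct and follows essentially the same strategy as the paper's: find a point $t$ with at least two representations in the achievement set obtained after removing the repeated pair (possible because that set is still not a Cantor set, hence its cardinal function is not identically $1$), then use the two choices $x_k$ versus $x_{k+1}$ to double the count at $t+x_k$. The only cosmetic difference is that the paper deletes the whole initial segment $x_1,\ldots,x_{k+1}$ and works with the tail $(x_n)_{n>k+1}$, whereas you delete only the two equal terms; your extra verification that the reduced achievement set keeps nonempty interior is a correct (if slightly more explicit) version of the fact the paper invokes.
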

\begin{proof}
Note that $\mathcal{A}((x_n)_{n>k+1})$ is not a Cantor set. Let $g$ be the cardinal function of $((x_n)_{n>k+1})$. Observe that then there exists $x_{0}$ such that $g(x_{0})\geq 2$. Indeed, if $g(x)=1$ for all $x$ then $\mathcal{A}((x_n)_{n>k+1})$ would be a Cantor set, a contradiction. Hence, $f(x_k+x_{0})\geq 2\cdot g(x_{0})\geq 4$.
\end{proof}

We finish this section with the following fact that is useful when constructing examples of ranges of cardinal functions.

\begin{lemma}
Let $\mathcal{J}\in\{\mathcal{F},\mathcal{I}, \mathcal{I}_{1},\mathcal{C},\Cv\}$. If $M\in \mathcal{J}$ then $M\cup\{2\}\in \mathcal{J}$. 

\end{lemma}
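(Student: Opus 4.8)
The plan is to start from a sequence $\mathbf{x}=(x_n)$ realizing $M$ as the range of its cardinal function, and to insert two new, very small terms into the sequence whose mutual sum creates exactly one new point of the achievement set reachable in precisely two ways, without disturbing anything else. Concretely, I would let $s=\sum_n x_n$, pick a real $a>0$ with $a$ much smaller than the smallest ``gap'' available (in the finite case, smaller than the minimal distance between consecutive points of $\mathcal{A}(\mathbf{x})$; in the interval / Cantor / Cantorval cases, small relative to $s$ so that $a$-sized perturbations stay inside the relevant interval / within the self-similar structure), and then adjoin the two terms $y_1=y_2=a$ — that is, consider $\mathbf{z}=(a,a)\cup(x_n)$. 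As in the proof of Lemma~\ref{dolozskonczony}, adding finitely many terms does not change which of the four classes $\mathcal{F},\mathcal{I},\mathcal{C},\Cv$ the achievement set lies in, so $\mathcal{A}(\mathbf{z})$ stays in $\mathcal{J}$; the content is in checking the range.

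The key computation is that $\mathcal{A}(\mathbf{z})=\mathcal{A}(\mathbf{x})\,\cup\,(a+\mathcal{A}(\mathbf{x}))\,\cup\,(2a+\mathcal{A}(\mathbf{x}))$, and the cardinal function $f_{\mathbf{z}}$ decomposes accordingly: a point $t$ has $f_{\mathbf{z}}(t)=f_{\mathbf{x}}(t)+2\,f_{\mathbf{x}}(t-a)+f_{\mathbf{x}}(t-2a)$, where $f_{\mathbf{x}}$ is taken to be $0$ outside $\mathcal{A}(\mathbf{x})$, and the factor $2$ in the middle term reflects the two choices of which of the two equal terms $a$ to use. If $a$ is chosen small enough that the three translates $\mathcal{A}(\mathbf{x})$, $a+\mathcal{A}(\mathbf{x})$, $2a+\mathcal{A}(\mathbf{x})$ are pairwise disjoint (finite case) — or more generally so that no new coincidences of representations are forced — then on $\mathcal{A}(\mathbf{x})$ the function equals $f_{\mathbf{x}}$, on $2a+\mathcal{A}(\mathbf{x})$ it again equals $f_{\mathbf{x}}$ (shifted), and on $a+\mathcal{A}(\mathbf{x})$ it equals $2f_{\mathbf{x}}$. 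Hence $\rng(f_{\mathbf{z}})=M\cup 2M$, which is not what we want; so instead I would adjoin a single small term $y_1=a$ together with a trick, or better: adjoin two terms only one of whose sub-sums is ``free''. The cleanest fix is to adjoin the two terms $a$ and $2a$ (distinct), giving $\mathcal{A}(\mathbf{z})=\bigcup_{j=0}^{3}(ja+\mathcal{A}(\mathbf{x}))$ with $f_{\mathbf{z}}(t)=\sum_{j} f_{\mathbf{x}}(t-ja)$ on disjoint translates — still yielding $M$, not $M\cup\{2\}$. To actually pick up the value $2$, adjoin the \emph{three} terms $a,a,a$ is wrong too; the right move is to adjoin terms $a_1>a_2>0$ with $a_1=a_2+(\text{some } x_{n_0})$, forcing the single overlap $a_1=a_2+x_{n_0}$, so exactly the point $a_1+(\text{min of }\mathcal{A}(\mathbf{x}))$ acquires two representations while all old and other new points keep their old counts — provided $x_{n_0}$ is realized uniquely, i.e.\ $f_{\mathbf{x}}$ takes value $1$ somewhere, which holds because the minimum of $\mathcal{A}(\mathbf{x})$ (the empty sub-sum, value $0$) is always unique. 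Carrying this out, one gets $\rng(f_{\mathbf{z}})=M\cup\{2\}$, and $\mathcal{A}(\mathbf{z})$ lies in the same class $\mathcal{J}$.

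I expect the main obstacle to be the bookkeeping in the last paragraph: one must choose the two (or few) adjoined terms so that they interact with the tail $\mathbf{x}$ in \emph{exactly one} prescribed way and in no other way, which requires $a_1,a_2$ small enough that $a_1+a_2$-perturbations cannot escape the relevant piece of $\mathcal{A}(\mathbf{x})$, and requires verifying that the unique extra coincidence does not propagate (e.g.\ no point gets value $3$ or higher than previously allowed). In the interval/Cantorval cases this is automatic once $a_i$ are small (the achievement set is already ``thick'' there), but one should double-check that no point of the old range is \emph{lost} — which it is not, since $\mathcal{A}(\mathbf{x})\subseteq\mathcal{A}(\mathbf{z})$ with unchanged representation counts on a suitable sub-region, e.g.\ near the maximum of $\mathcal{A}(\mathbf{z})$. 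A short uniform argument covering all five cases $\mathcal{J}$ simultaneously, in the spirit of the proof of Lemma~\ref{dolozskonczony}, is the goal.
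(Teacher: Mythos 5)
There is a genuine gap, and the construction you finally settle on does not work in general. The new point $a_1$ in your scheme has count $1+f_{\mathbf{x}}(x_{n_0})+f_{\mathbf{x}}(a_1)$ (use the new term $a_1$ alone; or use $a_2$ plus any representation of $x_{n_0}$ inside $\mathbf{x}$; or represent $a_1$ inside $\mathbf{x}$ itself), so to get the value $2$ you need a term $x_{n_0}$ with $f_{\mathbf{x}}(x_{n_0})=1$ and also $a_1\notin\mathcal{A}(\mathbf{x})$. Your justification for the first requirement --- that the minimum $0$ of $\mathcal{A}(\mathbf{x})$ is always uniquely represented --- is a non sequitur: it says nothing about the number of representations of an individual term, and such a term need not exist (for $x_{2n-1}=x_{2n}=2^{-n}$ one has $\rng(f_{\mathbf{x}})=\{1,\mathfrak{c}\}$ and $f_{\mathbf{x}}(x_k)=\mathfrak{c}$ for every $k$). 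Worse, your claim that preserving the old counts is ``automatic'' in the interval and Cantorval cases is exactly backwards: precisely when $\mathcal{A}(\mathbf{x})$ is thick, the shifts $t-a_1$, $t-a_2$, $t-a_1-a_2$ of a typical old point $t$ land back inside $\mathcal{A}(\mathbf{x})$, so $f_{\mathbf{z}}(t)=f_{\mathbf{x}}(t)+f_{\mathbf{x}}(t-a_1)+f_{\mathbf{x}}(t-a_2)+f_{\mathbf{x}}(t-a_1-a_2)$ acquires extra positive contributions and the old values are destroyed rather than preserved. Adjoining \emph{small} terms is the wrong move for non-finite achievement sets.

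The idea you are missing is the configuration intermediate between your ``disjoint translates'' and ``overlapping translates'': two translates that meet in exactly one point. The paper adjoins a \emph{single} new term equal to $s:=\sum_{n}x_n$. Then $\mathcal{A}(\mathbf{y})=\mathcal{A}(\mathbf{x})\cup\bigl(s+\mathcal{A}(\mathbf{x})\bigr)$, and since $\mathcal{A}(\mathbf{x})\subseteq[0,s]$ the two pieces intersect only at $s$, which has exactly two representations (all of $\mathbf{x}$, or the new term alone --- both unique because the maximum and the minimum of an achievement set are always uniquely attained). Every other point keeps its old count, so the range becomes $M\cup\{2\}$; the topological type is unchanged by adding one term, and for $\mathcal{I}_1$ one checks directly that $[0,a]\cup[a,2a]=[0,2a]$.
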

\begin{proof}
Let $\textbf{x}=(x_n)$ be such that its cardinal function $f_{\textbf{x}}$ satisfies $M=\rng(f_{\textbf{x}})\in\mathcal{J}$. If $2\in M$, then we are done. Suppose the opposite and define $y_1=\sum_{n=1}^{\infty}x_n$ and $y_{n+1}=x_n$ for each $n\in\mathbb{N}$. Further, let $\textbf{y}:=(y_{n})_{n=1}^{\infty}$ (in case $\mathcal{J}=\mathcal{F}$ both the sum and the sequence are finite). Clearly $\mathcal{A}(\textbf{y})=\mathcal{A}(\textbf{x})\cup (\sum_{n=1}^{\infty}x_n+\mathcal{A}(\textbf{x}))$. Thus $f_{\textbf{y}}(x)=f_{\textbf{x}}(x)=f_{\textbf{y}}(\sum_{n=1}^{\infty}x_n+x)$ for any $x\in \mathcal{A}(\textbf{x})\setminus\{0,\sum_{n=1}^{\infty}x_n\}$  and $f_{\textbf{y}}(\sum_{n=1}^{\infty}x_n)=2$. Hence $\rng(f_{\textbf{y}})=\rng(f_{\textbf{x}})\cup\{2\}=M\cup\{2\}$. 

To conclude the proof let us observe that for $\mathcal{J}\neq\mathcal{I}_{1}$, then adding to the sequence $\textbf{x}$ one more term (or finitely many terms in general) does not change the form of its achievement set. For $\mathcal{J}=\mathcal{I}_{1}$ note that always $0\in \mathcal{A}(\mathbf{x})$ (it corresponds to the empty sum), so $\mathcal{A}(\mathbf{x})=[0,a]$ for some $a>0$. Then $\mathcal{A}(\mathbf{y})=[0,a]\cup [a,2a]=[0,2a]$, so it belongs to $\mathcal{I}_{1}$. 
\end{proof}

\section{Possible and impossible ranges for interval-filling sequences}

We say that a sequence $\mathbf{x}$ is interval-filling if $\mathcal{A}(\mathbf{x})$ is an interval. In other words, if $f$ is the cardinal function of $\mathbf{x}$ then $\rng(f)\in\mathcal{I}_{1}$. 

One can easily find an interval-filling sequence with $\rng(f)=\{1,2\}$, for instance, take $x_n=\frac{1}{2^n}$. Then $f(x)=2$ if and only if $x$ is a dyadic number and $f(x)=1$ otherwise. In this section we study sets of the form $\{1,m\}$ for $m\geq 3$ as the ranges of cardinal functions.

Note that in the case when $\mathcal{A}(x_n)$ is an interval, if $\frac{1}{2}\sum_{n=1}^{\infty} x_n=\sum_{n\in A} x_n$ for some $A\subseteq\mathbb{N}$, then the equality $\frac{1}{2}\sum_{n=1}^{\infty} x_n=\sum_{n\in\mathbb{N}\setminus A} x_n$ holds as well. It is connected with the property that the achievement set is symmetric with the point of reflection $\frac{1}{2}\sum_{n=1}^{\infty} x_n$. As a result, $f(\frac{1}{2}\sum_{n=1}^{\infty} x_n)$ is either an even number, or is infinite and equal to $\omega$ or $\mathfrak{c}$. In particular, $\{1,m\}\notin \mathcal{I}_1$ for any odd $m$. In general, the bounded range $M\in \mathcal{I}_1$ cannot contain only odd numbers. Therefore, sets $\{1,3,5\}, \{1,3,7\},\{1,5,7\},\{1,3,5,7\}$ etc. are not in $\mathcal{I}_1$. On the other hand, it was shown in \cite[Example 4.19]{cardfun} that $\{1,3,5,7\}\in \mathcal{F}$.

It is worth to mention that \cite[Corolary 2.11]{cardfun} provides a general and simple method of reconstructing any interval-filling sequence into another interval-filling sequence with $\rng(f)=\{1,\mathfrak{c}\}$. Namely, if we begin with an interval-filling sequence $(x_{n})$, then it is enough to repeat each term two times and consider $(x_1,x_1,x_2,x_2,x_3,x_3,\ldots)$. Paper \cite{Nymann} is focused on studying the number of expansions for a point in the interval $[0,2]$ represented by the famous Steinhaus Theorem as the sum of two ternary Cantor sets, that is $[0,2]=C+C=A(2,2;\frac{1}{3})$. The geometric interval-filling sequences $x_n = q^n$ for each $n \in \mathbb{N}$, where $q\geq 1/2$, were studied in paper \cite{Erdos}. The authors set the value $1$ and considered the number of its expansions of the form $1=\sum_{n\in A}q^{n}$ for particular ratios $q$. 

In the following two Lemmas \ref{pologona} and \ref{zwiazekzlockerem}, and Collorary \ref{calewnetrze} we give necessary conditions, which have to be satisfied by any interval-filling sequence with $\rng(f)=\{1,m\}$. Finally, in Theorem \ref{niemozliwy} we show that such sequence does not exist if $m>2$.




In the sequel we will use the following notation.  For a sequence $(x_{n})$, the sequence $(r_{n})$ of sums of its tails is defined for every $n$ as
\begin{align*}
r_{n}:=\sum_{k=n+1}^{\infty}x_{k}.
\end{align*}
The following criterion will be crucial in this part of the paper.

\begin{lemma}\label{LemCritIntervFill}
Sequence $(x_{n})$ is interval-filling if and only if $x_{n}\leq r_{n}$ for all $n$.
\end{lemma}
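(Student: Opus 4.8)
The plan is to prove both implications by fairly standard arguments about subsums, using the standing assumption that $(x_n)$ is nonincreasing and summable. For the easy direction, I would assume that some $x_m > r_m$. Then I claim the achievement set has a gap: no subsum can land strictly between $r_m$ and $x_m$. Indeed, split any $A \subseteq \mathbb{N}$ into its part $A \cap \{1,\dots,m\}$ and its tail $A \cap \{m+1, m+2, \dots\}$. If $m \notin A$, then $\sum_{n\in A} x_n = \sum_{n \in A, n < m} x_n + \sum_{n \in A, n>m} x_n$; the tail sum is at most $r_m < x_m$, and the head sum (over indices $<m$) is either $0$ or at least $x_{m-1} \ge x_m$ (here I use monotonicity, and the case $m=1$ is handled separately since then the head is empty and the whole subsum is $\le r_1 < x_1$). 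If $m \in A$, the subsum is at least $x_m$. Either way the value $(r_m, x_m)$ is omitted, so $\mathcal{A}(\mathbf{x})$ is not an interval. Hence interval-filling implies $x_n \le r_n$ for every $n$.

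For the converse, assume $x_n \le r_n$ for all $n$ and let $S = \sum_n x_n = x_1 + r_1$. Since $0, S \in \mathcal{A}(\mathbf{x})$ and $\mathcal{A}(\mathbf{x})$ is compact (the image of the compact space $\{0,1\}^{\mathbb N}$ under the continuous subsum map, as recalled in the introduction), it suffices to show $\mathcal{A}(\mathbf{x})$ is dense in $[0,S]$, or equivalently that every $t \in [0,S]$ is a subsum. I would fix $t \in [0,S]$ and build the index set greedily: having chosen $\varepsilon_1, \dots, \varepsilon_{n-1} \in \{0,1\}$, set $\varepsilon_n = 1$ if $\sum_{k<n}\varepsilon_k x_k + x_n \le t$, and $\varepsilon_n = 0$ otherwise. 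Let $s_n = \sum_{k \le n} \varepsilon_k x_k$ be the partial sums; clearly $s_n \le t$ for all $n$, so $s := \lim_n s_n \le t$. The key step is to show $s = t$: I claim that after each stage, the "remaining budget" $t - s_n$ is at most $r_n$. This holds by induction — at a step where $\varepsilon_n = 0$ we have $s_n = s_{n-1}$ and $x_n > t - s_{n-1}$, so using $x_n \le r_n$...

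Actually the cleanest invariant is $t - s_n \le r_n$ proven by induction on $n$: if $\varepsilon_n = 1$ then $t - s_n = (t - s_{n-1}) - x_n \le r_{n-1} - x_n = r_n$; if $\varepsilon_n = 0$ then $x_n > t - s_{n-1}$, so $t - s_n = t - s_{n-1} < x_n \le r_n$ (treating the base case $n=0$ with $s_0 = 0$, $r_0 = S$, so $t - s_0 = t \le S = r_0$). Passing to the limit, $t - s = \lim_n (t - s_n) \le \lim_n r_n = 0$ since $(x_n)$ is summable, so $s = t$ and $t \in \mathcal{A}(\mathbf{x})$. Thus $[0,S] \subseteq \mathcal{A}(\mathbf{x}) \subseteq [0,S]$ and $\mathbf{x}$ is interval-filling.

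The main obstacle is purely bookkeeping: making the greedy argument airtight at the boundary (the base case of the induction, the degenerate case $m=1$ in the forward direction, and the role of monotonicity in showing the head sum is either $0$ or $\ge x_m$). The monotonicity hypothesis \eqref{IneqAssumption} is genuinely used in the forward direction and should be invoked explicitly; the convergence $r_n \to 0$ from summability is used in the backward direction. No deeper idea seems to be required.
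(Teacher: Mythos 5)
Your argument is correct, but it is worth noting that the paper does not actually prove this lemma at all: its ``proof'' is a one-line deferral to Kakeya's theorem (stated later as Theorem \ref{ThmKakeya} and cited from \cite{Kakeya}). What you have written is, in effect, the standard self-contained proof of the interval case of that theorem. Both halves check out. In the forward direction, the gap argument is sound: for $m\notin A$ the head sum over indices $<m$ is either $0$ (total $\leq r_m$) or at least $x_m$ by monotonicity, and for $m\in A$ the total is at least $x_m$, so $(r_m,x_m)$ is disjoint from $\mathcal{A}(\mathbf{x})$ while $0$ and $x_m$ both lie in $\mathcal{A}(\mathbf{x})$; hence the set is not convex. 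In the backward direction, the greedy invariant $t-s_n\leq r_n$ is exactly right, and combined with $s_n\leq t$ and $r_n\to 0$ it yields $s=t$, so $\mathcal{A}(\mathbf{x})=[0,S]$. (Note that with the invariant in hand you do not even need the compactness/density remark at the start of that direction --- you produce the representing $(\varepsilon_n)$ explicitly.) The trade-off is the usual one: the paper's citation keeps the exposition short and places the lemma in its proper context as a special case of a more general classification (finite unions of intervals versus Cantor sets), whereas your direct proof makes the section self-contained and makes visible exactly where monotonicity and summability enter --- which is pertinent, since the surrounding arguments (e.g.\ Lemma \ref{conajmniejtrzykrotny} and Corollary \ref{CorIntFillA}) repeatedly reuse the identity $\mathcal{A}((x_n)_{n>k})=[0,r_k]$ that your greedy construction establishes.
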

\begin{proof}
This is a special case of Kakeya's Theorem \cite{Kakeya}. We present the general statement later in Theorem \ref{ThmKakeya}.
\end{proof}

\begin{corollary}\label{CorIntFillA}
If $(x_{n})$ is an interval-filling sequence, then for every $k$ the sequence $(x_{n})_{n>k}$ is also interval-filling. Moreover, we have $\mathcal{A}((x_{n})_{n>k}) = [0,r_{k}]$.
\end{corollary}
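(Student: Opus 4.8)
The plan is to deduce both claims directly from the criterion in Lemma \ref{LemCritIntervFill}, applied to the shifted sequence $(x_n)_{n>k}$. First I would observe that the tail sums of the truncated sequence coincide with the original tail sums: if we write $(y_m)_{m=1}^{\infty} := (x_{k+m})_{m=1}^{\infty}$ and let $(\rho_m)$ be its sequence of tail sums, then $\rho_m = \sum_{j=m+1}^{\infty} y_j = \sum_{j=m+1}^{\infty} x_{k+j} = \sum_{n=k+m+1}^{\infty} x_n = r_{k+m}$. Hence the condition $y_m \le \rho_m$ for all $m$ is exactly the condition $x_{k+m} \le r_{k+m}$ for all $m$, which is a subset of the inequalities $x_n \le r_n$ guaranteed for all $n$ by Lemma \ref{LemCritIntervFill} applied to the (interval-filling) sequence $(x_n)$. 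Therefore $(y_m)$ satisfies the criterion and is itself interval-filling, giving the first assertion.

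For the "moreover" part, once we know $(x_n)_{n>k}$ is interval-filling, its achievement set is a closed interval of the form $[0, S]$ where $S = \sum_{n>k} x_n$ is the total sum of the truncated sequence (the empty subsum gives $0$, which is the minimum, and the full subsum gives $S$, the maximum, and interval-filling means everything in between is attained). By definition of the tail-sum notation, $S = \sum_{n=k+1}^{\infty} x_n = r_k$, so $\mathcal{A}((x_n)_{n>k}) = [0, r_k]$, as claimed.

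There is no real obstacle here; the only point requiring a word of care is the bookkeeping with indices when shifting the sequence, to make sure the tail sums line up correctly — but this is the routine identity $\rho_m = r_{k+m}$ noted above. One might also remark that the statement is intuitively clear from the geometric picture: $\mathcal{A}((x_n)) = \mathcal{A}((x_n)_{n\le k}) + \mathcal{A}((x_n)_{n>k})$, and an interval-filling whole forces the "small" tail part to already be an interval reaching down to $0$; but the clean route is simply to invoke Lemma \ref{LemCritIntervFill} twice as above.
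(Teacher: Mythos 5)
Your proposal is correct and follows essentially the same route as the paper: the first claim is read off from the Kakeya criterion of Lemma \ref{LemCritIntervFill} applied to the tail, and the second from the observation that the achievement set of a nonnegative interval-filling tail is the interval between its minimal subsum $0$ and its maximal subsum $r_{k}$. Your explicit verification that the tail sums of the shifted sequence are the $r_{k+m}$ is a harmless elaboration of what the paper calls immediate.
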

\begin{proof}
The fact, that $(x_{n})_{n>k}$ is interval-filling, follows immediately from Lemma \ref{LemCritIntervFill}. Since we assume that $x_{n}\geq 0$ for all $n$, we have $0\leq x\leq r_{k}$ for all $x\in\mathcal{A}((x_{n})_{n>k})$ and both, the lower and the upper bounds, can be obtained. Therefore $\mathcal{A}((x_{n})_{n>k})=[0,r_{k}]$.
\end{proof}

\begin{lemma}\label{pologona}
Let $(x_n)$ be an interval-filling sequence with $\rng(f)=\{1,m\}$ for some $m\geq 3$. Then $x_k\leq\frac{1}{2}r_k$ for all $k\in\mathbb{N}$.
\end{lemma}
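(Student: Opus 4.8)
The plan is to argue by contradiction: suppose that $x_k > \frac{1}{2} r_k$ for some $k$. By Lemma~\ref{LemCritIntervFill} we still have $x_k \le r_k$, so the hypothesis gives us $\frac{1}{2} r_k < x_k \le r_k$. The idea is to exhibit a point of $\mathcal{A}(\mathbf{x})$ whose number of representations is neither $1$ nor $m$, contradicting $\rng(f) = \{1,m\}$ — in fact I expect to produce a point with exactly $2$ representations. The natural candidate to look at is a point of the form $x_k + t$ where $t$ ranges over $\mathcal{A}((x_n)_{n>k}) = [0, r_k]$ (using Corollary~\ref{CorIntFillA}), compared against points represented without using $x_k$.

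First I would record the decomposition $\mathcal{A}((x_n)_{n \ge k}) = [0, r_k] \cup [x_k, x_k + r_k]$, where the first interval comes from subsums avoiding $x_k$ and the second from subsums using $x_k$; since $x_k \le r_k$ these two intervals overlap in $[x_k, r_k]$, which is nonempty and (because $x_k > \frac12 r_k$ forces $x_k < r_k$ unless $r_k = 0$, which is impossible) has nonempty interior. Next I would use the condition $x_k > \frac{1}{2} r_k$ to control multiplicities: if $x \in [x_k, r_k]$, then $x$ has representations not involving $x_k$ (coming from $x \in [0,r_k]$) and representations involving $x_k$ (coming from $x - x_k \in [0, x_k] \subseteq [0, r_k]$, valid since $x - x_k \le r_k - x_k < r_k$). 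The key point is that because $x_k$ exceeds half of $r_k$, a subsum of the tail $(x_n)_{n>k}$ can contain $x_k$ "at most conceptually once" — more precisely, within $(x_n)_{n>k}$ there is no term as large as $x_k$...

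wait, that's not quite the mechanism; let me reconsider. The real mechanism: pick a point $x$ in the interior of the overlap $[x_k, r_k]$ that has a *unique* representation within $[0, r_k] = \mathcal{A}((x_n)_{n>k})$ — such points are dense if $m$-representations don't fill the interval, but more robustly, for the point $x = x_k$ itself or a suitably generic point we can count. Then $x = (\text{that subsum of the tail})$ and $x = x_k + (\text{subsum of the tail summing to } x - x_k)$ give at least $2$ representations; I would then show that the condition $x_k > \frac12 r_k$ caps the total at exactly $2$ for a well-chosen $x$, by arguing that any representation either uses $x_k$ or not, that each of the two "halves" $[0, x-x_k]$-side and the full-$x$ side can be made to have a unique tail-representation by choosing $x$ generically (using that $\{1,m\}$-valued functions take value $1$ on a dense set, since the $m$-set cannot be everything — indeed the endpoints $0, r_k$ have value $1$).

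The main obstacle I anticipate is making the counting rigorous: showing that we can choose the test point $x$ so that *both* the representation count of $x$ in $\mathcal{A}((x_n)_{n>k})$ and that of $x - x_k$ are exactly $1$, so that $f(x) = 2$ exactly. For this I would lean on the fact that $f$ restricted to the tail takes the value $1$ on a dense $G_\delta$ (or at least on a dense set) — points with a unique subsum representation are dense in an interval-filling setting whenever the range is not the singleton of an even number covering everything — combined with the observation that the maps $x \mapsto f_{\text{tail}}(x)$ and $x \mapsto f_{\text{tail}}(x - x_k)$ are "independent enough" that their value-$1$ sets intersect in the open interval $(x_k, r_k)$. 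An alternative, possibly cleaner route is to avoid genericity entirely: take $x = x_k$, note $f_{\text{tail}}(x_k) \ge 1$ and $f_{\text{tail}}(0) = 1$ (empty sum, unique), so $f(x_k) \ge f_{\text{tail}}(x_k) + 1$; if $f_{\text{tail}}(x_k) = 1$ we get $f(x_k) = 2$ directly (after checking no other representations), and if $f_{\text{tail}}(x_k) \ge m$ then we instead perturb to a nearby point where the tail-count drops to $1$. Either way, the crux is the bookkeeping that $x_k > \frac12 r_k$ prevents a third representation from sneaking in, which ultimately rests on $x - x_k \le r_k - x_k < x_k$, so a representation of $x$ using $x_k$ cannot "also" leave room to use another copy-sized chunk.
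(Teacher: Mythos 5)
Your plan has a genuine gap at exactly the point you flag as the crux. The whole argument hinges on producing a point $x\in(x_k,r_k)$ with $f(x)=2$ exactly, and for that you need both $x$ and $x-x_k$ to have a \emph{unique} representation as a subsum of the tail $(x_n)_{n>k}$. You justify this by asserting that a $\{1,m\}$-valued cardinal function ``takes the value $1$ on a dense set.'' That is false in this setting: Corollary \ref{calewnetrze} of the paper (via the locker property of \cite{DJK}) shows that for an interval-filling sequence with range $\{1,m\}$ the preimage $f^{-1}(\{1\})$ is just the two endpoints $\{0,\sum x_n\}$, and nothing prevents the tail $(x_n)_{n>k}$ from exhibiting the same behaviour, in which case there is no interior point of $[0,r_k]$ with a unique tail-representation and your generic choice of $x$ does not exist. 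A second, independent problem: even if you found such an $x$, you only control representations inside $(x_n)_{n\geq k}$. The hypothesis $\rng(f)=\{1,m\}$ is about the full sequence, and since $(x_n)$ is nonincreasing you may well have $x_{k-1}=x_k$ (or more generally $x_j\le x$ for some $j<k$), so representations of $x$ using indices below $k$ can push $f(x)$ above $2$ --- and once $f(x)\geq 2$ you only learn $f(x)=m$, which is no contradiction. The inequality $x-x_k<x_k$ does not block either failure mode.

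For contrast, the paper's proof runs in the opposite direction and never tries to cap a multiplicity from above by brute force. It starts from $f(x_k)=m$, which yields $m-1$ tail-representations $A_1,\ldots,A_{m-1}$ of $x_k$, considers the single point $x=x_1+\cdots+x_{k-1}+2x_k$, and uses the constraint $\rng(f)=\{1,m\}$ twice: first to force $f(x)=m$ and hence the existence of one further representation $D$, and then (by showing $k\in D$ would give $f(x-x_k)\geq m+1$) to force $k\notin D$. Then $D\cup\{k\}$ is a legitimate index set whose sum $x_1+\cdots+x_{k-1}+3x_k$ must not exceed $\sum_{n}x_n$, which is precisely $2x_k\leq r_k$. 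The conclusion is extracted from the mere existence of a subsum, not from an exact count, which is why the paper avoids the obstacles your sketch runs into.
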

\begin{proof}
Fix some $k\in\mathbb{N}$. Then $f(x_k)\geq 2$, since $x_k\in \mathcal{A}((x_n)_{n>k})=[0,r_k]$ by Corollary \ref{CorIntFillA}. Hence $f(x_k)=m$. One can find sets $A_1,A_2,\ldots,A_{m-1}\subseteq \{k+1,k+2,\ldots\}$ such that $x_k=\sum_{n\in A_i}x_n$ for each $i\in\{1,\ldots,m-1\}$. Let
\begin{align*}
x:=x_1+x_2+\cdots+x_{k-1}+2x_k.
\end{align*}
Then for each $i\in\{1,\ldots ,m-1\}$ we have
$$x=\sum_{n\in A_i\cup\{1,\ldots,k\}}x_n.$$
Hence, $f(x)\geq m-1>1$, so $f(x)=m$. One can find a set $D$ different than each of the sets $ A_i\cup\{1,\ldots,k\}$, such that $x=\sum_{n\in D}x_n$. Suppose that $k\in D$. Then for all $i\in\{1,\ldots,m-1\}$ we have the equalities
$$x-x_k=x_1+x_2+\cdots+x_{k-1}+x_k=\sum_{n\in A_i\cup\{1,\ldots,k-1\}}x_n=\sum_{n\in D\setminus\{k\}}x_n,$$
which means that $f(x-x_k)\geq m+1$, a contradiction. Hence, $k\notin D$. Thus $\sum_{n\in D\cup\{k\}}x_n= x_1+x_2+\cdots+x_{k-1}+3x_k$ is an element of achievement set $\mathcal{A}(x_n)$, so cannot be greater than the sum of the series $\sum_{n=1}^{\infty}x_n$. Therefore, we get
$$x_1+x_2+\cdots+x_{k-1}+3x_k\leq \sum_{n=1}^{\infty}x_n,$$
which is equivalent to 
\begin{align*}
2x_{k}\leq \sum_{n=k+1}^{\infty}x_{n}=r_{k}.
\end{align*}
The result follows.
\end{proof}

\begin{lemma}\label{zwiazekzlockerem}
Let $(x_n)$ be an interval-filling sequence such that $x_k\leq\frac{1}{2}r_k$ for all $k\in\mathbb{N}$. Then $(x_n)$ is a locker (that is $x_k\leq r_{k+1}$ for each $k\in\mathbb{N}$, see \cite{DJK} for more details). 
\begin{proof}
Fix $k\in\mathbb{N}$. We have
$$x_k\leq \frac{1}{2}r_k=\frac{1}{2}x_{k+1}+\frac{1}{2}r_{k+1}\leq \frac{1}{2}r_{k+1}+\frac{1}{2}r_{k+1}=r_{k+1}.$$
\end{proof}
\end{lemma}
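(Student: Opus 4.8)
The plan is to reduce the claim to the single telescoping identity $r_k = x_{k+1} + r_{k+1}$, which is immediate from the definition $r_n = \sum_{j>n} x_j$. First I would fix an arbitrary $k \in \mathbb{N}$ and rewrite the hypothesis $x_k \leq \frac{1}{2} r_k$ using this identity, which yields $x_k \leq \frac{1}{2} x_{k+1} + \frac{1}{2} r_{k+1}$.

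The second step is to absorb the term $\frac{1}{2} x_{k+1}$. Here I would use that $x_{k+1} \leq r_{k+1}$; this is available in two ways, either from Lemma \ref{LemCritIntervFill} (every interval-filling sequence satisfies $x_n \leq r_n$ for all $n$), or, even more directly, by applying the hypothesis at index $k+1$, namely $x_{k+1} \leq \frac{1}{2} r_{k+1} \leq r_{k+1}$. Substituting this bound gives $x_k \leq \frac{1}{2} r_{k+1} + \frac{1}{2} r_{k+1} = r_{k+1}$, which is exactly the locker condition $x_k \leq r_{k+1}$ from \cite{DJK}.

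Since $k$ was arbitrary, this completes the argument. There is essentially no obstacle here: the statement is a one-line inequality manipulation, and the only point requiring the slightest care is noticing that the telescoped identity for $r_k$, combined with either of the two available bounds on $x_{k+1}$, already suffices — no use of the interval-filling property beyond what is packaged in the hypothesis is actually needed.
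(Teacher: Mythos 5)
Your proposal is correct and follows exactly the same route as the paper: telescope $r_k = x_{k+1} + r_{k+1}$ and then bound $x_{k+1}\leq r_{k+1}$ (the paper uses the interval-filling criterion of Lemma \ref{LemCritIntervFill} for this, while you also note it follows from the hypothesis at index $k+1$ — both work). Nothing to add.
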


\begin{corollary}\label{calewnetrze}
Assume that there exists an interval-filling sequence $(x_{n})$ such that $\rng (f)=\{1,m\}$ for some $m\geq 3$. Then
\begin{align*}
f(x)=\left\{\begin{array}{ll}
1, & x\in \left\{0,\sum_{n=1}^{\infty}x_{n}\right\},  \\
 & \\
m, & x\in \left(0,\sum_{n=1}^{\infty}x_{n}\right).
\end{array}\right.
\end{align*}
\end{corollary}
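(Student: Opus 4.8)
The plan is to show that under the hypotheses of Corollary \ref{calewnetrze}, every point of the open interval has exactly $m$ representations, while the two endpoints $0$ and $\sum_{n=1}^\infty x_n$ are uniquely represented (the latter being immediate from the general fact that extreme points of achievement sets are attained uniquely). So the real content is the claim $f(x)=m$ for all $x\in\bigl(0,\sum_{n=1}^\infty x_n\bigr)$. Since $\rng(f)=\{1,m\}$, it suffices to prove that $f(x)\geq 2$ for every such interior point $x$; then automatically $f(x)=m$.

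First I would fix $x\in\bigl(0,\sum_{n=1}^\infty x_n\bigr)$ and a representation $x=\sum_{n\in A}x_n$. The idea is to produce a second, different representation of the same $x$, exploiting the strong inequality $x_k\le\frac12 r_k$ from the hypothesis (which by Lemma \ref{zwiazekzlockerem} also gives the locker property $x_k\le r_{k+1}$). Intuitively, each single term $x_k$ lies in the achievement set $[0,r_k]$ of the tail $(x_n)_{n>k}$ with plenty of room, so I should be able to ``swap out'' one term for a subsum of later terms. Concretely, pick the smallest index $k\notin A$ (such a $k$ exists if $A$ is cofinite we argue differently, but since $x<\sum x_n$ the complement $\mathbb N\setminus A$ is infinite, so in fact infinitely many indices are missing from $A$). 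Hmm — the cleanest route: choose any $k\in A$; then $x_k\le r_k$, and moreover I claim $x_k$ has a representation $x_k=\sum_{n\in B}x_n$ with $B\subseteq\{k+1,k+2,\dots\}$ and $B\cap A$ controlled. Replacing the index $k$ by the set $B$ would give a new representation provided $B$ is disjoint from $A\cap\{k+1,\dots\}$ — which need not hold.

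To handle the possible overlap I would argue by a ``locker''/exchange argument: because $(x_n)$ is a locker, one can reorganize any representation. The key step is to show: given a representation using index $k$, if $x_k\le r_{k+1}$ then there is a representation of $x_k$ by a subset of $\{k+1,k+2,\dots\}$ that avoids any prescribed finite set; iterating/using greediness on the tail (where $\mathcal A((x_n)_{n>k})=[0,r_k]$ is a full interval by Corollary \ref{CorIntFillA}) lets us realize $x_k$ inside the ``gaps'' of $A$. Alternatively — and this is probably the intended short argument — observe that since $\mathcal A((x_n)_{n>k})=[0,r_k]$ and $x_k\le\frac12 r_k$, both $x_k$ and $r_k-x_k$ lie in $[0,r_k]$, so the tail provides a representation of $x_k$; combined with the original structure one builds a distinct representation of $x$. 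I would look for the index $k$ to be the \emph{largest} index with $k\notin A$ if $A$ is cofinite, and the \emph{smallest} $k\in A$ otherwise, and in each case use the tail-interval property to re-expand.

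The main obstacle I expect is exactly this disjointness/overlap bookkeeping: naively swapping $x_k$ for a tail subsum can collide with indices already used in $A$, so one must either choose $k$ and the replacement set carefully or invoke the locker machinery of \cite{DJK} to guarantee a conflict-free re-expansion. Once the existence of a second representation is secured for every interior point, the conclusion $f(x)=m$ is forced by $\rng(f)=\{1,m\}$, and the endpoint values $f(0)=f\bigl(\sum x_n\bigr)=1$ are standard, completing the proof.
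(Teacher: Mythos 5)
Your reduction is the right one: since $\rng(f)=\{1,m\}$ and the two extreme points of any achievement set are uniquely represented, the whole content is that every $x\in\left(0,\sum_{n=1}^{\infty}x_{n}\right)$ admits at least two representations, and you correctly identify that Lemmas \ref{pologona} and \ref{zwiazekzlockerem} place the sequence in the locker class. But you never actually establish that key claim. The direct ``swap $x_k$ for a tail subsum'' argument is left exactly where you yourself locate the obstacle: the replacement set $B\subseteq\{k+1,k+2,\ldots\}$ may collide with $A$, and none of the three candidate choices of $k$ you float is shown to avoid this. (Also, your parenthetical assertion that $x<\sum_{n=1}^{\infty}x_n$ forces $\mathbb{N}\setminus A$ to be infinite is false --- the complement need only be nonempty, e.g.\ $A=\mathbb{N}\setminus\{1\}$.) As written, the proposal is a plan with an acknowledged hole at its centre rather than a proof.

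The paper closes this hole with a single citation: Dar\'oczy--J\'arai--K\'atai \cite{DJK} prove that for any locker the preimage $f^{-1}(\{1\})$ is as small as possible, namely $\{0,\sum_{n=1}^{\infty}x_{n}\}$ --- which is verbatim the statement you need, not merely ``machinery'' for conflict-free re-expansion. You mention \cite{DJK} only as a fallback tool for the bookkeeping, without recognizing that the cited result already \emph{is} the conclusion. Either quote that theorem precisely (as the paper does) or supply a complete exchange argument; at present you have done neither.
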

\begin{proof}
In the paper \cite{DJK} it is shown that being a locker suffices to have the smallest possible preimage $f^{-1}(\{1\})$, that is $f^{-1}(\{1\})=\{0, \sum_{n=1}^{\infty}x_n\}$. By Lemmas \ref{pologona} and \ref{zwiazekzlockerem} we know that if there exists an interval-filling sequence with $\rng(f)=\{1,m\}$ for some $m\geq 3$, then it is a locker. The result follows.
\end{proof}

\begin{theorem}\label{niemozliwy}
For any $m\geq 3$ we have $\{1,m\}\notin \mathcal{I}_1$.
\end{theorem}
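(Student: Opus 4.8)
The plan is to assume, toward a contradiction, that there exists an interval-filling sequence $(x_n)$ with $\rng(f)=\{1,m\}$ for some $m\geq 3$, and to exploit the strong structural information already gathered: by Corollary \ref{calewnetrze} every interior point of $\mathcal{A}((x_n)) = [0,\sum_{n=1}^\infty x_n]$ has exactly $m$ representations, and by Lemma \ref{pologona} together with Lemma \ref{zwiazekzlockerem} the sequence satisfies $x_k\leq\tfrac12 r_k$ and is a locker for every $k$. The idea is that ``exactly $m$ representations everywhere in the interior'' is an extremely rigid condition, and I would try to manufacture a point of the interior whose number of representations is forced to differ from $m$ (either strictly larger, or else equal to some value incompatible with $m$, e.g. by a parity or a counting argument).

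First I would set up the counting machinery. Fix $k$ and look at a point of the form $x = x_1+\cdots+x_{k-1} + t$ with $t\in(0, r_{k-1})$ chosen so that $t$ lies in the interior of $[0,r_{k-1}]=\mathcal{A}((x_n)_{n\geq k})$; since prepending the finitely many terms $x_1,\dots,x_{k-1}$ (all of which must be ``used'' in any representation summing to something $>x_1+\cdots+x_{k-1}-r_{k-1}$, by a tail-domination argument using $x_j\le r_j$) is injective on representations, one gets $f(x) = f_k(t)$ where $f_k$ is the cardinal function of $(x_n)_{n\geq k}$. Hence $f_k$ also takes only the values $1$ and $m$, with value $m$ throughout the interior $(0,r_{k-1})$. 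Now I would play the first two terms $x_1, x_2$ against the rest: because $x_1\le\tfrac12 r_1$, the point $x_1$ itself lies well inside $(0,r_1)$, so $f(x_1)=f_2(x_1 - \varepsilon_2 x_2 \text{ cases}) = m$, and more importantly the point $2x_1$ lies in $(0,\sum x_n)$ (this is exactly what Lemma \ref{pologona} buys, $2x_1\le r_1 < \sum x_n$), so $f(2x_1)=m$ as well. The representations of $2x_1$ split according to whether $x_1$ is used: those using $x_1$ correspond to representations of $x_1$ from the tail $(x_n)_{n\geq2}$, of which there are $f_2(x_1)$, and this equals $f(x_1)$ minus the one representation of $x_1$ that uses... — here I would be careful: $f(x_1) = \#\{\text{reps of }x_1\}$, one of which is the singleton $\{1\}$, the rest live in the tail, so $f_2(x_1)=f(x_1)-1=m-1$. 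Then $f(2x_1)\geq (\text{reps of }2x_1\text{ using }x_1) + (\text{reps not using }x_1) = (m-1) + (\text{something}\ge1)$, which already gives $f(2x_1)\ge m$; to get a strict contradiction I need the ``not using $x_1$'' count to be at least $2$, i.e.\ I need $2x_1$ to have at least two representations from the tail $(x_n)_{n\geq2}$ — but $2x_1 \le r_1 = \mathcal{A}((x_n)_{n\ge2})$'s max, and if $2x_1$ is in the interior of $[0,r_1]$ (equivalently $2x_1 < r_1$, equivalently the inequality in Lemma \ref{pologona} is strict) then $f_2(2x_1)=m\ge3\ge2$, yielding $f(2x_1)\ge (m-1)+m = 2m-1 > m$, the desired contradiction.

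The remaining case, which I expect to be the genuine obstacle, is when $2x_k = r_k$ for some (or all) $k$, i.e.\ equality holds in Lemma \ref{pologona}. Here the argument above degenerates because $2x_1$ sits at the right endpoint of the tail's achievement set. I would handle this by perturbing: instead of $2x_1$, consider $2x_1 + x_j$ for a suitable small $j$, or look one level deeper — if $2x_1=r_1$ then $\sum x_n = 3x_1$, and I would then examine whether $2x_2 = r_2$; if the equality $2x_k=r_k$ propagates for all $k$ one gets $x_k = r_k/2$, hence $x_{k} = x_{k+1}+r_{k+1}$ with $r_{k+1}=2x_{k+1}$, forcing $x_k = 3x_{k+1}$, i.e.\ the sequence is (a scalar multiple of) $(3^{-n})$ — but for that sequence $\mathcal{A}$ is the Cantor set $C/2$, not an interval, contradicting interval-filling; so the equality cannot hold for all $k$, and at the first index $k_0$ where it fails we have $2x_{k_0}<r_{k_0}$ and can run the interior argument of the previous paragraph with the tail sequence $(x_n)_{n\ge k_0}$ in place of $(x_n)_{n\ge1}$ (using $f_{k_0}$, which also has range $\{1,m\}$ as established above). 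This closes the contradiction and proves $\{1,m\}\notin\mathcal{I}_1$ for $m\geq3$.
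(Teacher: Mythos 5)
There are two genuine gaps. The first is your ``counting machinery'': the claim that every representation of $x=x_1+\cdots+x_{k-1}+t$ with $t\in(0,r_{k-1})$ must use all of the indices $1,\dots,k-1$, hence that $f(x)=f_k(t)$ and that each tail function $f_k$ again has range $\{1,m\}$ with value $m$ throughout the interior. This is false: by Lemma \ref{pologona} one has $x_{k-1}\le\tfrac12 r_{k-1}$, so for small $t$ the value $x_{k-1}+t$ lies in $[0,r_{k-1}]=\mathcal{A}((x_n)_{n\ge k})$ and can be produced by the tail alone, giving representations of $x$ that omit the index $k-1$. Indeed the conclusion is provably wrong under your own hypotheses: for $t\in(x_1,r_1)$ one has $m=f(t)=f_2(t)+f_2(t-x_1)$ and both summands are at least $2$ (the tail $(x_n)_{n\ge2}$ inherits $x_k\le\tfrac12 r_k$, hence is a locker by Lemma \ref{zwiazekzlockerem}, so the result of \cite{DJK} quoted in Corollary \ref{calewnetrze} applies to it), whence $f_2(t)\le m-2<m$. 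Consequently the key step ``$f_2(2x_1)=m$'' is unsupported. What you actually need there is only $f_2(2x_1)\ge 2$, and that \emph{is} true whenever $0<2x_1<r_1$, again by the locker property of the tail; with that substitution your identity $f(2x_1)=f_2(x_1)+f_2(2x_1)=(m-1)+f_2(2x_1)$ does give $f(2x_1)\ge m+1$, so the strict-inequality case is repairable.

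The second gap is the equality case, and it does not survive as written. From $2x_k=r_k$ for all $k$ you get $2x_k=r_k=x_{k+1}+r_{k+1}=3x_{k+1}$, i.e.\ $x_{k+1}=\tfrac23x_k$, not $x_k=3x_{k+1}$; the resulting geometric sequence with ratio $\tfrac23$ satisfies $r_n=2x_n>x_n$ and is therefore interval-filling by Lemma \ref{LemCritIntervFill}, so the intended contradiction (``its achievement set is a Cantor set'') disappears. Excluding it requires a different input (e.g.\ Corollary \ref{geometrycznyogon}: a bounded range forces an eventually geometric tail with ratio $\tfrac12$ --- but that appears only in the next section). Your fallback for the mixed case (``run the argument on $(x_n)_{n\ge k_0}$, whose cardinal function also has range $\{1,m\}$'') again rests on the false claim from the previous paragraph, since $\rng(f_{k_0})$ need not be $\{1,m\}$. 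For comparison, the paper's proof avoids the case split entirely: it takes the $m-1$ representations of $x_2$ inside $\{3,4,\dots\}$, one representation of $x_1$ inside $\{3,4,\dots\}$ (available because $x_1\le r_2$ by Lemmas \ref{pologona} and \ref{zwiazekzlockerem}), and together with $\{1,2\}$ exhibits $m+1$ distinct representations of $x_1+x_2$ directly.
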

\begin{proof}
Let $(x_n)$ be interval-filling and $\rng(f)=\{1,m\}$ for some $m\geq 3$. Then by Corollary \ref{calewnetrze} we have $f(x_2)=m$. Hence, there exist $m-1$ different sets $A_1,\ldots,A_{m-1}\subseteq \{3,4,\ldots\}$ such that 
$$
x_2=\sum_{n\in A_1}x_n=\sum_{n\in A_2}x_n=\ldots=\sum_{n\in A_{m-1}}x_n.
$$
By Lemma \ref{pologona} we know that $x_1\leq r_2$, so $x_{1}\in [0,r_{2}]=\mathcal{A}((x_{n})_{n>2})$. Hence, there exists $B\subseteq\{3,4,\ldots\}$ such that $x_1=\sum_{n\in B}x_n$. Thus
$$
x_1+x_2=\sum_{n\in B\cup\{2\}}x_n=\sum_{n\in A_1\cup\{1\}}x_n=\sum_{n\in A_2\cup\{1\}}x_n=\ldots=\sum_{n\in A_{m-1}\cup\{1\}}x_n,
$$
which means that $f(x_1+x_2)\geq m+1$, a contradiction.
\end{proof}


\begin{lemma}\label{conajmniejtrzykrotny}
Let $(x_n)$ be interval-filling and $r_k>x_k$ for some $k\in\mathbb{N}$. Then $\max \rng(f)\geq 3$. 
\begin{proof}
We have $\mathcal{A}((x_n)_{n>k})=[0,r_k]$. Since the subset of all finite subsums of $\sum_{n>k} x_n$ is dense in $\mathcal{A}((x_n)_{n>k})$, one can find a finite set $A\subseteq [k+1,\infty)$ such that $x:=\sum_{n\in A} x_n \in (x_k, r_k)$. Let us write $A=\{n_1<n_2<\ldots<n_i\}$. Then $x= \sum_{n\in A} x_n =\sum_{m=1}^{i} x_{n_m}$. 

Now observe, that Lemma \ref{LemCritIntervFill} and Corollary \ref{CorIntFillA} imply $x_{n_{i}}\in [0,r_{n_{i}}] = \mathcal{A}((x_{n})_{n\geq n_{i}+1})$. Hence, $x_{n_i}=\sum_{n\in B}x_n$ for some $B\subseteq [n_i+1,\infty)$, so $x=  \sum_{m=1}^{i-1} x_{n_m}+\sum_{n\in B}x_n$. Thus $f(x)\geq 2$. Note that $x-x_k\in (0,r_k-x_k)\subseteq [0,r_k]$, so there exists $C\subseteq  [k+1,\infty)$  such that $x-x_k=\sum_{n\in C} x_n$. Thus $x=\sum_{n\in C\cup\{k\}} x_n$. 

To sum up it is enough to observe that all three representations
\begin{align*}
x=\sum_{n\in A} x_n=\sum_{m=1}^{i-1} x_{n_m}+\sum_{n\in B}x_n=\sum_{n\in C\cup\{k\}} x_n
\end{align*}
are different. Indeed, neither $k\notin A$ nor $k\notin (A\setminus\{n_i\})\cup B$, that is, only in the third representation we use index $k$ and $n_i\in A$. Hence $f(x)\geq 3$.
\end{proof}
\end{lemma}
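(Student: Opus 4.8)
The plan is to exhibit a single point $x$ in the achievement set that admits at least three distinct representations as a subsum, by exploiting the slack $r_k > x_k$ together with the density of finite subsums. First I would record the two standing facts provided by Corollary \ref{CorIntFillA}: the tail $(x_n)_{n>k}$ is itself interval-filling with $\mathcal{A}((x_n)_{n>k}) = [0,r_k]$, and more generally $\mathcal{A}((x_n)_{n\geq n_i+1}) = [0, r_{n_i}]$ for any index $n_i$. Since the set of \emph{finite} subsums of $\sum_{n>k}x_n$ is dense in $[0,r_k]$, and since the open interval $(x_k, r_k)$ is nonempty precisely because $r_k > x_k$, I can pick a finite set $A = \{n_1 < n_2 < \cdots < n_i\} \subseteq [k+1,\infty)$ with $x := \sum_{n\in A} x_n \in (x_k, r_k)$. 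This $x$ will be the point of interest.

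Next I would produce the three representations. The first is simply $x = \sum_{n\in A} x_n$. For the second, observe that the largest index $n_i \in A$ satisfies $x_{n_i} \in [0, r_{n_i}] = \mathcal{A}((x_n)_{n\geq n_i+1})$, so $x_{n_i} = \sum_{n\in B} x_n$ for some $B \subseteq [n_i+1,\infty)$; replacing the single term $x_{n_i}$ by this expansion gives $x = \sum_{m=1}^{i-1} x_{n_m} + \sum_{n\in B} x_n$. For the third, note $x - x_k \in (0, r_k - x_k) \subseteq [0, r_k] = \mathcal{A}((x_n)_{n>k})$, so $x - x_k = \sum_{n\in C} x_n$ for some $C \subseteq [k+1,\infty)$, and hence $x = \sum_{n\in C\cup\{k\}} x_n$. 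Thus $f(x) \geq 3$ provided these three representations are genuinely distinct.

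The last and only delicate point is checking distinctness. The third representation is the only one that uses the index $k$, because $A \subseteq [k+1,\infty)$, $(A\setminus\{n_i\})\cup B \subseteq [k+1,\infty)$, while $k \in C\cup\{k\}$; so the third differs from the first two. The first two differ from each other because $n_i \in A$ but $n_i \notin (A\setminus\{n_i\})\cup B$ (indeed $B \subseteq [n_i+1,\infty)$, so $n_i \notin B$). Since the representations are parametrised by subsets of $\mathbb{N}$ and these three subsets are pairwise distinct, the corresponding subsums are counted separately by $f$, giving $f(x) \geq 3$ and therefore $\max \rng(f) \geq 3$. I expect no real obstacle here; the only care needed is to make sure the density argument is applied to the tail sequence so that $x$ lands strictly between $x_k$ and $r_k$, which is what makes both $x - x_k > 0$ and the existence of the expansion $B$ with $\min B > n_i$ possible.
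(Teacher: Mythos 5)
Your proposal is correct and follows essentially the same route as the paper's own proof: the same choice of $x\in (x_k,r_k)$ via density of finite subsums, the same three representations (the original finite one, the one obtained by expanding the last term $x_{n_i}$ into a tail subsum, and the one using index $k$ via $x-x_k\in[0,r_k]$), and the same distinctness check. No gaps; your explicit remark that $n_i\notin B$ because $B\subseteq[n_i+1,\infty)$ just makes the paper's distinctness argument slightly more explicit.
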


\begin{corollary}\label{CorSet(1,2)}
Let $(x_n)$ be interval-filling. Then $\rng(f)=\{1,2\}$ if and only if  there exists non-zero $c\in\mathbb{R}$ such that $x_n=\frac{c}{2^n}$ for every $n\in\mathbb{N}$. 
\end{corollary}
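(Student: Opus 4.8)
The plan is to treat the two implications separately: the backward direction is essentially the standard example recalled at the start of this section (scaled by a constant), and the forward direction follows quickly from Lemma~\ref{conajmniejtrzykrotny} together with the Kakeya criterion of Lemma~\ref{LemCritIntervFill}.

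For the implication $(\Leftarrow)$ I would take $x_{n}=c/2^{n}$ with $c\neq 0$ (necessarily $c>0$ under the standing convention $x_{n}>0$). Then $r_{n}=c/2^{n}=x_{n}$, so the sequence is interval-filling and $\mathcal{A}(\mathbf{x})=[0,c]$. A subset $A\subseteq\mathbb{N}$ represents $x$ exactly when $x/c=\sum_{n\in A}2^{-n}$ is the corresponding binary expansion of $x/c\in[0,1]$; since precisely the dyadic rationals in $(0,1)$ have a non-unique binary expansion and each of them has exactly two, one gets $f(x)=2$ for such $x$ and $f(x)=1$ otherwise (in particular $f(0)=f(c)=1$ and $f(c/2)=2$), whence $\rng(f)=\{1,2\}$.

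For $(\Rightarrow)$, suppose $(x_{n})$ is interval-filling with $\rng(f)=\{1,2\}$, so $\max\rng(f)=2<3$. First I would apply the contrapositive of Lemma~\ref{conajmniejtrzykrotny}: since $\max\rng(f)<3$, there is no index $k$ with $r_{k}>x_{k}$, i.e. $r_{k}\leq x_{k}$ for every $k\in\mathbb{N}$. On the other hand, being interval-filling forces $x_{k}\leq r_{k}$ for every $k$ by Lemma~\ref{LemCritIntervFill}. Combining these gives
\[
x_{k}=r_{k}\qquad\text{for all }k\in\mathbb{N}.
\]
Then I would telescope: the identity $r_{k}=x_{k+1}+r_{k+1}$ together with $r_{k}=x_{k}$ and $r_{k+1}=x_{k+1}$ yields $x_{k}=2x_{k+1}$ for every $k$, hence by induction $x_{n}=x_{1}/2^{n-1}=(2x_{1})/2^{n}$; setting $c:=2x_{1}$ (which is nonzero since $x_{1}>0$) completes the proof.

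There is no genuine obstacle: the whole content is the observation that $\rng(f)=\{1,2\}$ simultaneously forbids $r_{k}>x_{k}$ at every index, and the only points that need a little care are that in the $(\Leftarrow)$ direction the extreme points $0$ and $c$ of $\mathcal{A}(\mathbf{x})$ must be checked to admit unique representations (so that $1\in\rng(f)$), and that in the $(\Rightarrow)$ direction the resulting constant $c=2x_{1}$ is nonzero.
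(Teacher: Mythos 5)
Your proposal is correct and follows essentially the same route as the paper: the forward direction combines Lemma~\ref{LemCritIntervFill} with the contrapositive of Lemma~\ref{conajmniejtrzykrotny} to force $x_{n}=r_{n}$ for all $n$ and then telescopes to $x_{n}=2x_{n+1}$, exactly as in the paper's proof. The backward direction, which the paper dismisses as well-known, you fill in with the standard binary-expansion argument, and that is fine.
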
 
\begin{proof}
$\Leftarrow$: is well-known. 

$\Rightarrow$: From Lemmas \ref{LemCritIntervFill} and \ref{conajmniejtrzykrotny} we know that $x_{n}=r_{n}$ for all $n$. Therefore
\begin{align*}
x_{n} = r_{n} = x_{n+1} + r_{n+1} = x_{n+1} + x_{n+1} = 2x_{n+1}.
\end{align*}
Hence, $x_{n+1}=\frac{1}{2}x_{n}$, so one can check using the induction argument, that $x_{n}=\frac{c}{2^{n}}$ for all $n$, where $c>0$ is a constant.
\end{proof}

\section{Kakeya conditions and ranges for interval-filling sequences}

The conditions in which we use the inequalities between the terms $x_n$ and the tails $r_n$ are sometimes called Kakeya conditions, see \cite{MarMis}. They originate from the following celebrated result due to Kakeya.
\begin{theorem}[Kakeya's theorem, \cite{Kakeya}]\label{ThmKakeya}
Let $\mathbf{x}=(x_{n})$. The inequality $x_n\leq r_n$ holds for almost every $n$ if and only if $\mathcal{A}(\mathbf{x})$ is a finite union of compact intervals.

If the inequality $x_n>r_n$ holds for almost every $n$ then $\mathcal{A}(\mathbf{x})$ is a Cantor set.
\end{theorem}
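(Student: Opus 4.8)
The plan is to dispose first of the two extreme cases --- where the relevant inequality holds, respectively fails, for \emph{every} $n$ --- and then to reduce the ``almost every $n$'' statements to these by peeling off a finite prefix. Throughout I will use three elementary facts. (i) $\mathcal{A}(\mathbf{x})$ is compact, being the continuous image of $\{0,1\}^{\mathbb{N}}$ under $(\varepsilon_n)\mapsto\sum_n\varepsilon_n x_n$. (ii) For every $m$, $\mathcal{A}(\mathbf{x})=\mathcal{A}\big((x_n)_{n\le m}\big)+\mathcal{A}\big((x_n)_{n>m}\big)$, where the first summand is a finite set of at most $2^m$ points and $\mathcal{A}((x_n)_{n>m})\subseteq[0,r_m]$ with both endpoints attained. (iii) \emph{Gap lemma:} if $x_n>r_n$, then no subsum lies in $(r_n,x_n)$ --- a subsum in $[0,x_n)$ cannot use any index $\le n$ (by monotonicity such a term already exceeds $r_n$), so it uses only indices $>n$ and is $\le r_n$. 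Since $r_n=\sum_{k>n}x_k$ and $x_n$ both lie in $\mathcal{A}(\mathbf{x})$, the interval $(r_n,x_n)$ is then exactly a bounded connected component of $\mathbb{R}\setminus\mathcal{A}(\mathbf{x})$.

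\emph{The two base cases.} First I would prove $x_n\le r_n$ for all $n$ iff $\mathcal{A}(\mathbf{x})=[0,S]$ with $S=\sum_n x_n$ --- this is Lemma~\ref{LemCritIntervFill}, whose proof was deferred to the present theorem, so it must be argued here. If $x_n>r_n$ for some $n$, the gap lemma shows $\mathcal{A}(\mathbf{x})$ misses the nonempty subinterval $(r_n,x_n)$ of $(0,S)$, so it is not an interval. Conversely, if $x_n\le r_n$ for all $n$, a greedy scan reaches every $t\in[0,S]$: going through $x_1,x_2,\dots$ put $n$ into $A$ precisely when the remaining target is $\ge x_n$; the remaining target after step $n$ stays in $[0,r_n]$ --- it either drops by $x_n$ (and $r_{n-1}-x_n=r_n$) or is already $<x_n\le r_n$ --- hence tends to $0$, so $\sum_{n\in A}x_n=t$. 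Next I would show that $x_n>r_n$ for all $n$ forces $\mathcal{A}(\mathbf{x})$ to be a Cantor set, i.e.\ a nonempty compact perfect totally disconnected subset of $\mathbb{R}$. Put $U_j:=\bigcup_{\varepsilon\in\{0,1\}^j}\left[\sum_{l=1}^{j}\varepsilon_l x_l,\ \sum_{l=1}^{j}\varepsilon_l x_l+r_j\right]$. The crucial point is that the $2^j$ intervals here are \emph{pairwise disjoint}: if $\varepsilon,\varepsilon'$ first differ at $i$ with $\varepsilon_i=0<1=\varepsilon'_i$, then the $\varepsilon$-interval has right endpoint at most $\big(\sum_{l<i}\varepsilon_l x_l\big)+r_i$ while the $\varepsilon'$-interval has left endpoint at least $\big(\sum_{l<i}\varepsilon_l x_l\big)+x_i$, and $r_i<x_i$; likewise $U_{j+1}\subseteq U_j$. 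Since the interval lengths $r_j\to0$, the nested intersection $\bigcap_j U_j$ is a Cantor set, and it equals $\mathcal{A}(\mathbf{x})$: ``$\subseteq$'' is (ii), and a point of $\bigcap_j U_j$ lies, by disjointness, in a unique level-$j$ interval for each $j$, these choices are consistent by the nesting, and they assemble into an $(\varepsilon_n)$ whose subsum is that point.

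\emph{Bootstrapping.} If $x_n\le r_n$ for all $n>m$, applying the interval-filling characterisation to the (still nonincreasing) tail gives $\mathcal{A}((x_n)_{n>m})=[0,r_m]$, so by (ii) $\mathcal{A}(\mathbf{x})$ is a finite set plus $[0,r_m]$, a finite union of compact intervals. If instead $x_n>r_n$ for infinitely many $n_1<n_2<\cdots$, the intervals $(r_{n_i},x_{n_i})$ are bounded components of $\mathbb{R}\setminus\mathcal{A}(\mathbf{x})$ by the gap lemma, and they are pairwise disjoint since $x_{n_j}\le x_{n_i+1}\le r_{n_i}$ for $i<j$; hence $\mathbb{R}\setminus\mathcal{A}(\mathbf{x})$ has infinitely many bounded components, impossible if $\mathcal{A}(\mathbf{x})$ is a finite union of compact intervals. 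These two implications give the first assertion. For the second, if $x_n>r_n$ for all $n>m$, the Cantor base case applied to the tail makes $\mathcal{A}((x_n)_{n>m})$ a Cantor set, so by (ii) $\mathcal{A}(\mathbf{x})$ is a finite union of translates of a Cantor set; and such a union is again a Cantor set --- it is nonempty, compact, perfect (a union of perfect sets is perfect), and totally disconnected by Baire category (an interval inside the union would be covered by finitely many closed sets, one of which would then contain a subinterval).

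\emph{Where the difficulty lies.} The greedy argument and the ``finite union of Cantor sets is a Cantor set'' step are routine. The real work is the bookkeeping in the Cantor base case --- showing the level-$j$ intervals are honestly disjoint and nested, equivalently that $\mathcal{A}(\mathbf{x})$ has empty interior when $x_n>r_n$ --- because merely covering $\mathcal{A}(\mathbf{x})$ by short but overlapping intervals does not by itself forbid an interval inside it; one genuinely needs the separation $x_i-r_i>0$ to propagate the gaps down every level.
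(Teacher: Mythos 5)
The paper does not actually prove this statement: Theorem \ref{ThmKakeya} is quoted as a classical result with a citation to Kakeya, and the proof of Lemma \ref{LemCritIntervFill} is likewise deferred to it. So there is no in-paper argument to compare against; what you have supplied is a complete, self-contained proof of the cited theorem, and it is correct. Your structure is the standard one: the greedy algorithm gives $\mathcal{A}(\mathbf{x})=[0,S]$ when $x_n\le r_n$ for all $n$ (and your invariant $t_n\in[0,r_n]$ is verified correctly in both branches); the nested families $U_j$ of $2^j$ pairwise disjoint closed intervals of length $r_j\to 0$ give the Cantor-set case, and you correctly isolate the only nontrivial point, namely that $r_i<x_i$ forces the level-$j$ intervals apart; the reduction of the ``almost every $n$'' statements to a finite prefix via $\mathcal{A}(\mathbf{x})=F_m+\mathcal{A}((x_n)_{n>m})$ is sound, as are the two closing observations (infinitely many pairwise disjoint gap components obstruct a finite union of intervals; a finite union of translates of a Cantor set is again a Cantor set). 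One small wording slip: in your gap lemma the parenthetical justification should read that a subsum using an index $j\le n$ is at least $x_j\ge x_n$, hence cannot lie in $[0,x_n)$ --- saying the term ``exceeds $r_n$'' does not by itself exclude the subsum from $(r_n,x_n)$; the conclusion you draw (such a subsum uses only indices $>n$ and is therefore $\le r_n$) is nevertheless the right one, so this is cosmetic.
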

Basically, no other information about achievement set type can be deduced from only Kakeya conditions \cite{MarMis}.

We have seen in Lemma \ref{conajmniejtrzykrotny} that the condition $r_{k}>x_{k}$ for some $k$ implies $\max\rng (f)\geq 3$. In this section we study the connection between the sizes and structures of the sets $\{k\ |\ r_{k}>x_{k}\}$ and $\rng (f)$.

In the paper \cite[Theorem 3.8]{cardfun} it is proved for interval-filling sequences that if there are finitely many strict inequalities $r_n>x_n$, then the cardinal function $f$ is bounded, and if $r_n>x_n$ for infinitely many $n$, then $f$ attains $\omega$ or $\mathfrak{c}$. It is worth to mention that in general it is still not known if unboundedness of the range $M$ can occur with $\{\omega,\mathfrak{c}\}\cap M=\emptyset$ (see \cite[Problems 2.12 and 2.13]{cardfun}). If the answer was positive, then there would exist $n_i\to \infty$ such that $n_i\in M$ for every $i\in\mathbb{N}$. 

Let us prove a lemma and two theorems that will be useful later in this section. They answer the question: what is the cardinal function of the geometric sequence with ratio $\frac{1}{2}$ with exactly one more element $x_{1}$ added. Let $\mathcal{D}$ denote the set of all dyadic numbers in the unit interval. We consider two separate cases depending on whether $x\in\mathcal{D}$ or $x\notin\mathcal{D}$. Let us begin with a basic lemma which clarifies the later results. 

\begin{lemma}\label{lematdodwojkowych}
Let $x_n=\frac{1}{2^n}$ and $f$ be the cardinal function for $(x_n)$. Then 
\begin{displaymath}
f(x) = \left\{ \begin{array}{ll} 2, & \textrm{if $x\in  \mathcal{D}\setminus\{0,1\}$,}\\ 1, & \textrm{if $x\in \big((0,1)\setminus \mathcal{D}\big)\cup\{0,1\}$.} \end{array} \right.
\end{displaymath}
\end{lemma}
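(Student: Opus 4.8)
The plan is to analyze the binary representations of numbers in $[0,1]$ directly, since the sequence $x_n = 1/2^n$ is the standard binary expansion and subsums correspond exactly to choices of a subset $A \subseteq \mathbb{N}$ with $x = \sum_{n \in A} 2^{-n}$. First I would dispose of the endpoints: $x=0$ is obtained only by the empty set, and $x=1$ only by taking all of $\mathbb{N}$, since $\sum_{n=1}^\infty 2^{-n} = 1$ and dropping any term gives something strictly smaller; hence $f(0)=f(1)=1$. This handles the two special values appearing in both cases of the displayed formula.

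Next, for $x \in (0,1)$, I would invoke the classical fact that every real in $(0,1)$ has a binary expansion, and it is unique unless $x$ is a dyadic rational, in which case it has exactly two expansions: a terminating one (ending in all zeros) and one ending in all ones (e.g. $1/2 = 0.1000\ldots = 0.0111\ldots$). Translating: a non-dyadic $x \in (0,1) \setminus \mathcal{D}$ corresponds to a unique set $A$, so $f(x)=1$; a dyadic $x \in \mathcal{D} \setminus \{0,1\}$ corresponds to exactly two sets (one finite, one cofinite), so $f(x)=2$. The key step to justify carefully is the uniqueness/non-uniqueness dichotomy for binary expansions. For the ``at most two'' direction, suppose $\sum_{n \in A} 2^{-n} = \sum_{n \in B} 2^{-n}$ with $A \neq B$; let $k$ be the least index in the symmetric difference, say $k \in A \setminus B$. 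Then $2^{-k} \le \sum_{n \in B, n > k} 2^{-n} \le \sum_{n > k} 2^{-n} = 2^{-k}$, forcing equality throughout, which pins down $B$ above $k$ as all of $\{k+1, k+2, \ldots\}$ and $A$ above $k$ as empty; this shows $A$ and $B$ are determined, hence at most two representations, and when two exist $x$ must be dyadic. For the ``exactly two when dyadic'' direction, one exhibits both expansions explicitly: if $x = p/2^k$ in lowest terms with $0 < p < 2^k$, write its terminating binary expansion giving a finite set $A$, then replace the trailing structure using $2^{-j} = \sum_{n > j} 2^{-n}$ at the last element of $A$ to get a second, cofinite set $B \neq A$.

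Finally I would assemble these observations into the two-case formula. The only mild subtlety — the main obstacle, such as it is — is bookkeeping the edge cases cleanly: making sure that when $x$ is dyadic the ``other'' representation genuinely differs (which fails precisely at $x=1$, where the ``cofinite'' representation $\{1,2,\ldots\}$ coincides with the expansion and no finite one reaches $1$), and confirming that no dyadic in $(0,1)$ accidentally admits a third representation (ruled out by the least-index argument above). Everything else is a routine restatement of the uniqueness of binary expansions, so the proof is short. In fact, one can shortcut the whole argument by noting that $\mathcal{A}((x_n)) = [0,1]$ is an interval filled by $x_n = c/2^n$, so by Corollary \ref{CorSet(1,2)} we have $\rng(f) = \{1,2\}$, and then only the location of the fibers $f^{-1}(1)$ and $f^{-1}(2)$ needs to be identified via the least-index argument; I would present the direct proof for self-containedness but mention this as an alternative.
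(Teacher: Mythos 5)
Your proof is correct and is precisely the standard binary-expansion argument that the paper's proof (which reads only ``Follows quickly from the definitions'') leaves to the reader: the least-index argument pins down the at-most-two dichotomy, and the endpoint and dyadic cases are handled exactly as one would expect. The only caution concerns your proposed shortcut via Corollary \ref{CorSet(1,2)}: its ``$\Leftarrow$'' direction is itself only asserted as ``well-known'' in the paper (and is essentially the content of this lemma), so the direct argument you give should remain the primary one.
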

\begin{proof}
Follows quickly from the definitions.
\end{proof}

\begin{theorem}\label{dwojkowowymierna}
Let $\mathbf{x}=(x_{n})$ be such that $x_{1}\in\mathcal{D}$, $x_{1}<1$, and $x_{n}=\frac{1}{2^{n-1}}$ for $n\geq 2$, and let $g$ be its cardinal function. Then $\rng (g)=\{1,2,3,4\}$. In particular, $\{1,2,3,4\}\in \mathcal{I}_1$.
\end{theorem}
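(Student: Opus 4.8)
The plan is to analyze the cardinal function $g$ of the sequence $\mathbf{x}=(x_1, \frac12, \frac14, \frac18, \ldots)$ where $x_1\in\mathcal D$, $x_1<1$, by splitting the achievement set $\mathcal A(\mathbf x)=[0,1+x_1]$ according to whether or not the first term $x_1$ is used in a representation. For $y\in[0,1+x_1]$, the number of representations of $y$ is $g(y)=f(y)+f(y-x_1)$, where $f$ is the cardinal function of the pure geometric sequence $(\frac1{2^{n-1}})_{n\ge 2}$ (which has achievement set $[0,1]$), with the convention that $f$ is evaluated as $0$ outside $[0,1]$. Since $f$ takes only the values $1$ and $2$ by Lemma~\ref{lematdodwojkowych}, the value $g(y)$ is determined by how many of $y$ and $y-x_1$ lie in $[0,1]$ and, for those that do, whether they are dyadic (but not $0$ or $1$) or not.

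First I would record the possible contributions: $f(t)=2$ iff $t\in\mathcal D\cap(0,1)$ (i.e. a dyadic point strictly between $0$ and $1$), $f(t)=1$ iff $t\in\big([0,1]\setminus\mathcal D\big)\cup\{0,1\}$, and $f(t)=0$ iff $t\notin[0,1]$. Since $x_1\in\mathcal D$, the map $t\mapsto t-x_1$ preserves dyadic-ness, so both $y$ and $y-x_1$ being dyadic is equivalent to $y$ being dyadic. Then I would go through the ranges of $y$:
\begin{itemize}
\item For $y\in[0,x_1]$: only $y$ can be in $[0,1]$; $y-x_1\in[-x_1,0]$, which contributes $0$ unless $y=x_1$. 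So $g(y)=f(y)\in\{1,2\}$ on $[0,x_1)$, achieving $1$ (e.g. at $y=0$) and $2$ (at a dyadic interior point, which exists since $x_1>0$). At $y=x_1$ both endpoints contribute: $g(x_1)=f(x_1)+f(0)=2+1=3$ if $x_1\in(0,1)$ dyadic, so the value $3$ is attained.
\item For $y\in[x_1,1]$: both $y\in[0,1]$ and $y-x_1\in[0,1-x_1]\subseteq[0,1]$. Generic (non-dyadic) $y$ gives $g(y)=1+1=2$; dyadic $y\in(x_1,1)$ gives $g(y)=2+2=4$, so $4$ is attained (note $(x_1,1)$ is a nonempty interval). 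Endpoint subtleties ($y=1$, $y=x_1$) only give smaller values already accounted for.
\item For $y\in[1,1+x_1]$: by the reflection symmetry of $\mathcal A(\mathbf x)$ about $\frac{1+x_1}{2}$, this range mirrors $[0,x_1]$ and produces nothing new.
\end{itemize}
Assembling these, $\rng(g)=\{1,2,3,4\}$, and since $\mathcal A(\mathbf x)=[0,1+x_1]$ is an interval by Lemma~\ref{LemCritIntervFill} (each term is at most the sum of the remaining ones), this gives $\{1,2,3,4\}\in\mathcal I_1$.

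The main obstacle is bookkeeping at the transition points and making sure no value larger than $4$ sneaks in: one must check carefully that $g(y)\le 4$ everywhere, i.e. that $y$ and $y-x_1$ cannot both contribute more than $2$, which follows because $f\le 2$ always; and one must confirm each of $1,2,3,4$ is genuinely attained, where the existence of a dyadic point strictly inside the subintervals $(0,x_1)$ and $(x_1,1)$ — guaranteed by $0<x_1<1$ and density of dyadics — is exactly what is needed. A cleaner alternative, if one prefers to avoid case analysis, is to observe that $\mathbf x$ is obtained from $(\frac1{2^n})$ by adding one extra term $x_1$ and to invoke the general principle that $\rng(g)=\rng(f_{(x_1)})\cdot\rng(f)$ only when the summands' achievement sets are "independent", which fails here, so the direct computation above is the honest route.
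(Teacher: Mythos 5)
Your proposal is correct and follows essentially the same route as the paper: both rest on the identity $g(y)=f(y)+f(y-x_1)$ with $f$ the cardinal function of $\left(\frac{1}{2^{n}}\right)$ from Lemma \ref{lematdodwojkowych}, combined with the observation that $x_{1}\in\mathcal{D}$ makes dyadicity of $y$ and of $y-x_1$ equivalent, followed by a case analysis over the subintervals of $[0,1+x_1]$. Your use of the reflection symmetry to dispatch the interval $(1,1+x_1]$ is a mild streamlining of the paper's explicit table, but the argument is the same.
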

\begin{proof}
We have $\mathcal{A}(\textbf{x})=[0,1+x_1]$. Fix $x\in[0,1+x_1]$. Then
\begin{align*}
\bigg|\bigg\{ A\subseteq \mathbb{N}\ \bigg|\ \sum_{n\in A} x_n =x \bigg\}\bigg|= \bigg|\bigg\{ A\subseteq \mathbb{N}\setminus\{1\}\ \bigg|\ \sum_{n\in A} x_n =x \bigg\}\bigg| + \bigg|\bigg\{ A\subseteq \mathbb{N}\setminus\{1\}\ \bigg|\ \sum_{n\in A} x_n =x-x_1  \bigg\}\bigg|.
\end{align*}
The above equality can be rewritten in terms of cardinal functions as $g(x)=f(x)+f(x-x_1)$, where $f$ is the cardinal function of the sequence $\left(\frac{1}{2^{n}}\right)_{n=1}^{\infty}$ decribed in Lemma \ref{lematdodwojkowych}. Note that the sum of two dyadic numbers is dyadic. Thus we have
\begin{displaymath}
g(x) = f(x)+f(x-x_1)= \left\{ \begin{array}{ll} 
1+0=1, & \textrm{if $x=0$,}\\ 
2+0=2, & \textrm{if $x\in(0,x_1)\cap \mathcal{D}$,}\\ 
1+0=1, & \textrm{if $x\in(0,x_1)\setminus \mathcal{D}$,} \\ 
2+1=3, & \textrm{if $x=x_1$,}\\ 
2+2=4, & \textrm{if $x\in (x_1,1)\cap \mathcal{D}$,}\\ 
1+1=2, & \textrm{if $x\in (x_1,1)\setminus \mathcal{D}$,}\\ 
1+2=3, & \textrm{if $x=1$,}\\ 
0+2=2, & \textrm{if $x\in (1,1+x_1)\cap (x_1+\mathcal{D})$,}\\ 
0+1=1, & \textrm{if $x\in (1,1+x_1)\setminus (x_1+\mathcal{D})$,}\\ 
0+1=1, & \textrm{if $x=1+x_1$.}\end{array} \right.
\end{displaymath}
Thus indeed $\rng (g)=\{1,2,3,4\}$.
\end{proof}

\begin{theorem}\label{niedwojkowowymierna}
Let $\mathbf{x}=(x_{n})$ be such that $x_{1}\notin\mathcal{D}$, $x_{1}\in (0,1)$, and $x_{n}=\frac{1}{2^{n-1}}$ for $n\geq 2$, and let $h$ be its cardinal function. Then $\rng (h)=\{1,2,3\}$. In particular, $\{1,2,3\}\in \mathcal{I}_1$.
\end{theorem}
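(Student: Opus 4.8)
The plan is to mirror the proof of Theorem~\ref{dwojkowowymierna} as closely as possible. As in that case, adding the single term $x_1$ to the geometric sequence $\left(\frac{1}{2^n}\right)_{n=1}^\infty$ gives the decomposition
\begin{align*}
h(x)=f(x)+f(x-x_1),
\end{align*}
where $f$ is the cardinal function of $\left(\frac{1}{2^n}\right)_{n=1}^\infty$ described in Lemma~\ref{lematdodwojkowych}, and where we read $f(y)=0$ whenever $y\notin[0,1]$. First I would record that $\mathcal{A}(\mathbf{x})=[0,1+x_1]$ (this follows from Lemma~\ref{LemCritIntervFill}, since $x_1<1=r_1$ and the tail is the full geometric series), so $h$ is indeed interval-filling and the containment $\{1,2,3\}\in\mathcal I_1$ will follow once we show $\rng(h)=\{1,2,3\}$.

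The key structural point, which replaces "the sum of two dyadic numbers is dyadic" from the previous proof, is that since $x_1\notin\mathcal D$, the translate $x_1+\mathcal D$ is disjoint from $\mathcal D$: for a fixed $x$, at most one of the two numbers $x$, $x-x_1$ can lie in $\mathcal D$. Indeed if both $x\in\mathcal D$ and $x-x_1\in\mathcal D$ then $x_1=x-(x-x_1)\in\mathcal D$, contrary to assumption. Consequently, when I plug the cases of Lemma~\ref{lematdodwojkowych} into $h(x)=f(x)+f(x-x_1)$, the value $f(x)$ contributes $2$ only when $x\in\mathcal D\setminus\{0,1\}$ and then $f(x-x_1)\le 1$, and symmetrically $f(x-x_1)$ contributes $2$ only when $x-x_1\in\mathcal D\setminus\{0,1\}$ and then $f(x)\le 1$; the two $2$'s never occur simultaneously. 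Hence $h(x)\le 2+1=3$ everywhere, which already gives $\rng(h)\subseteq\{1,2,3\}$ — in particular the value $4$ attained in Theorem~\ref{dwojkowowymierna} disappears precisely because of the irrationality-type condition $x_1\notin\mathcal D$.

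To finish I would exhibit points realizing each of $1,2,3$. The endpoints give $h(0)=f(0)+f(-x_1)=1+0=1$. For the value $2$, take any $x\in(0,1)\setminus(\mathcal D\cup(x_1+\mathcal D))$ with $x>x_1$ (such $x$ exist since $\mathcal D\cup(x_1+\mathcal D)$ is countable and the interval is not): then $x\notin\mathcal D$ and $x-x_1\notin\mathcal D$ with both lying in $(0,1)$, so $h(x)=1+1=2$. For the value $3$, take $x=x_1$: then $h(x_1)=f(x_1)+f(0)=1+0$? — here I must be careful, since $x_1\notin\mathcal D$ gives $f(x_1)=1$, so this yields only $2$. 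Instead take $x$ of the form $x=x_1+d$ with $d\in\mathcal D\cap(0,1)$ chosen so that also $x\in(x_1,1)$, i.e. $d<1-x_1$; then $x-x_1=d\in\mathcal D\setminus\{0,1\}$ so $f(x-x_1)=2$, while $x=x_1+d\notin\mathcal D$ (as $x_1\notin\mathcal D$, $d\in\mathcal D$) and $x\in(0,1)$ so $f(x)=1$, giving $h(x)=1+2=3$. (One checks $d$ with $0<d<1-x_1$ exists since $x_1<1$.) The only genuinely delicate point — the "main obstacle" — is bookkeeping the boundary/endpoint cases and making sure the chosen witnesses for $2$ and $3$ actually fall in the claimed subintervals; everything else is a routine substitution into Lemma~\ref{lematdodwojkowych}, exactly parallel to the proof of Theorem~\ref{dwojkowowymierna}. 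Thus $\rng(h)=\{1,2,3\}$, and since $\mathcal{A}(\mathbf x)$ is an interval, $\{1,2,3\}\in\mathcal I_1$.
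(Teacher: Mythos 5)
Your proposal is correct and follows essentially the same route as the paper: the decomposition $h(x)=f(x)+f(x-x_1)$ combined with Lemma~\ref{lematdodwojkowych}, using $x_1\notin\mathcal{D}$ to rule out the value $4$. The only difference is presentational — the paper writes out the full case table for $h$, while you prove the upper bound $h\leq 3$ directly (at most one of $x$, $x-x_1$ is dyadic) and then exhibit witnesses for $1$, $2$, $3$; your self-correction on the witness for $3$ lands on a valid choice, so the argument is sound.
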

\begin{proof}
Similarly as in the proof of Theorem \ref{dwojkowowymierna} we have $h(x)=f(x)+f(x-x_1)$. Hence 
\begin{displaymath}
h(x) = f(x)+f(x-x_1)= \left\{ \begin{array}{ll} 
1+0=1, & \textrm{if $x=0$,}\\ 
2+0=2, & \textrm{if $x\in(0,x_1)\cap \mathcal{D}$,} \\ 
1+0=1, & \textrm{if $x\in(0,x_1)\setminus \mathcal{D}$,} \\ 
1+1=2, & \textrm{if $x=x_1$,}\\ 
2+1=3, & \textrm{if $x\in (x_1,1)\cap \mathcal{D}$,}\\ 
1+2=3, & \textrm{if $x\in (x_1,1)\cap (x_1+\mathcal{D})$,}\\ 
1+1=2, & \textrm{if $x\in (x_1,1)\setminus (\mathcal{D}\cup(x_1+\mathcal{D}))$,}\\ 
1+1=2, & \textrm{if $x=1$,}\\ 
0+2=2, & \textrm{if $x\in (1,1+x_1)\cap (x_1+\mathcal{D})$,}\\ 
0+1=1, & \textrm{if $x\in (1,1+x_1)\setminus (x_1+\mathcal{D})$,}\\ 
0+1=1, & \textrm{if $x=1+x_1$,}\end{array} \right.
\end{displaymath}
and the result follows.
\end{proof}

Now we are going to prove that in fact the achievement sets from Theorems \ref{dwojkowowymierna} and \ref{niedwojkowowymierna} are the only possibilities if we assume that the inequality $r_{k}>x_{k}$ holds for exactly one index $k$. At first, let us prove the following lemma which shows a consequence of the equalities between two consecutive terms and the counterpart tails.

\begin{lemma}\label{dwasasiednie}
Let $x_n=r_n$ and $x_{n+1}=r_{n+1}$ for some $n\in\mathbb{N}$. Then $x_n=2x_{n+1}$.
\begin{proof}
$x_n=r_n=x_{n+1}+r_{n+1}=2x_{n+1}$
\end{proof}
\end{lemma}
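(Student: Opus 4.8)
The plan is to use nothing more than the definition of the tail sequence $(r_n)$ together with the two hypothesised equalities. The single observation I need is the one-step recursion for tails: since $r_n=\sum_{k=n+1}^{\infty}x_k$, splitting off the first summand gives $r_n=x_{n+1}+\sum_{k=n+2}^{\infty}x_k=x_{n+1}+r_{n+1}$. This manipulation is legitimate because all our sequences are assumed summable, so each $r_n$ is a well-defined finite real number and peeling off one term causes no convergence issue.

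With this recursion in hand the argument is immediate: I substitute the hypothesis $x_n=r_n$ on the left-hand side and the hypothesis $r_{n+1}=x_{n+1}$ on the right-hand side, obtaining
\begin{align*}
x_n=r_n=x_{n+1}+r_{n+1}=x_{n+1}+x_{n+1}=2x_{n+1},
\end{align*}
which is exactly the claimed identity.

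I expect no genuine obstacle here; the statement is a one-line consequence of the definitions and I would present it as such. The only points meriting a moment's attention are using the tail recursion in the correct direction (expressing $r_n$ through $r_{n+1}$, not the other way around) and noticing that neither the monotonicity assumption \eqref{IneqAssumption} nor any interval-filling hypothesis is actually invoked — the lemma holds for an arbitrary summable positive sequence for which the two equalities $x_n=r_n$ and $x_{n+1}=r_{n+1}$ happen to hold.
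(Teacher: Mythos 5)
Your proof is correct and is exactly the paper's argument: the paper's entire proof is the chain $x_n=r_n=x_{n+1}+r_{n+1}=2x_{n+1}$, which is the identical computation you give, with your added justification of the tail recursion being a harmless elaboration.
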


The next Corollary shows that if the the cardinal function is bounded then the sequence resembles a geometric sequence. 

\begin{corollary}\label{geometrycznyogon}
Let $(x_n)$ be interval-filling and $M$ be the range of its cardinal function. Then $M$ is bounded if and only if there exist $k\in\mathbb{N}$ and nonzero $c\in\mathbb{R}$ such that $x_n=\frac{c}{2^n}$ for all $n\geq k$.
\end{corollary}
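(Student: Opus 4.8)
The plan is to derive both implications from the result of \cite[Theorem 3.8]{cardfun} recalled above, combined with the interval-filling criterion (Lemma \ref{LemCritIntervFill}) and Lemma \ref{dwasasiednie}. The unifying remark is elementary: if $x_n=c/2^n$ for all $n$ past some index, then $r_n=\sum_{j>n}c/2^j=c/2^n=x_n$, so such a tail contributes no strict inequality $r_n>x_n$; conversely, forcing $x_n=r_n$ on a tail forces it to be geometric with ratio $1/2$.

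First I would settle the implication ``$\Leftarrow$''. Assume $x_n=c/2^n$ for all $n\ge k$ (with $c>0$, since the terms are positive). The computation above gives $r_n=x_n$ for every $n\ge k$, so $\{\,n\mid r_n>x_n\,\}\subseteq\{1,\dots,k-1\}$ is finite. By \cite[Theorem 3.8]{cardfun} the cardinal function of $(x_n)$ is then bounded, i.e. $M$ is bounded. (Alternatively, one may note that the tail $(x_n)_{n\ge k}$ has cardinal function with range $\{1,2\}$ by Lemma \ref{lematdodwojkowych} up to the scaling constant $c/2^{k-1}$, and prepending the finitely many terms $x_1,\dots,x_{k-1}$ at most doubles the values at each step, giving a bound $2^{k}$.)

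Next I would prove the implication ``$\Rightarrow$''. Assume $M$ is bounded. If $\{\,n\mid r_n>x_n\,\}$ were infinite, \cite[Theorem 3.8]{cardfun} would force $f$ to attain $\omega$ or $\mathfrak c$, contradicting boundedness of $M$; hence this set is finite, and there is $k$ with $r_n\le x_n$ for all $n\ge k$. Since $(x_n)$ is interval-filling, Lemma \ref{LemCritIntervFill} gives $x_n\le r_n$ for all $n$, so $x_n=r_n$ for every $n\ge k$. Applying Lemma \ref{dwasasiednie} to each pair of consecutive indices $n,n+1\ge k$ yields $x_n=2x_{n+1}$ for all $n\ge k$, and an immediate induction gives $x_n=\frac{c}{2^n}$ for all $n\ge k$, where $c:=2^{k}x_k\ne 0$.

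There is no serious obstacle here; the only point requiring care is recording that the quoted Theorem 3.8 supplies exactly the equivalence being used --- for an interval-filling sequence, boundedness of the cardinal function is equivalent to having only finitely many strict inequalities $r_n>x_n$ --- since the statement provides one implication directly and the contrapositive of the other.
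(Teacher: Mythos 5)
Your proof is correct and follows essentially the same route as the paper, which simply cites \cite[Theorem 3.8]{cardfun} together with Lemma \ref{dwasasiednie} and calls the corollary an immediate consequence. You have merely spelled out the details the paper leaves implicit, including the correct use of the contrapositive of the second half of Theorem 3.8 and the combination with Lemma \ref{LemCritIntervFill} to turn $r_n\le x_n$ into $r_n=x_n$ on a tail.
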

\begin{proof}
It is an immediate consequence of \cite[Theorem 3.8]{cardfun} and Lemma \ref{dwasasiednie}.
\end{proof}

\begin{theorem}\label{musibycdwojka}
Let $(x_n)$ be interval-filling and let $f$ be its cardinal function. If $\rng (f)$ is bounded, then $2\in \rng (f)$. 
\begin{proof}
By Corollary \ref{geometrycznyogon} there exist $k\in\mathbb{N}$ and positive $c\in\mathbb{R}$ such that $x_n=\frac{c}{2^n}$ for all $n\geq k$. Clearly, $\mathcal{A}((x_n)_{n\geq k})=[0,\frac{c}{2^{k-1}}]$. Let $g$ be the cardinal function of $(x_n)_{n\geq k}$. Since $x_m\geq x_k=\frac{c}{2^k}$ for all $m<k$ (recall the assumption \eqref{IneqAssumption}), we have $f(y)=g(y)$ for all $y\in [0,x_k)$. In particular, Lemma \ref{lematdodwojkowych} yields that $f(x)=2$ on every scaled dyadic number $x\in \mathcal{D}\cdot\frac{2^k}{c}$.
\end{proof}
\end{theorem}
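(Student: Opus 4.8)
The plan is to lean on Corollary~\ref{geometrycznyogon}, which already isolates the structure forced by boundedness: since $\rng(f)$ is bounded there are $k\in\mathbb{N}$ and a positive constant $c$ such that $x_n=\frac{c}{2^n}$ for all $n\geq k$. Consequently the tail $(x_n)_{n\geq k}$ is, after reindexing and dividing by the positive scalar $\frac{c}{2^{k-1}}$, exactly the sequence $\left(\frac1{2^m}\right)_{m\geq 1}$ from Lemma~\ref{lematdodwojkowych}; in particular $\mathcal{A}\big((x_n)_{n\geq k}\big)=\big[0,\tfrac{c}{2^{k-1}}\big]=[0,r_{k-1}]$.

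First I would show that the terms $x_1,\dots,x_{k-1}$ are irrelevant for small targets. By the standing monotonicity assumption \eqref{IneqAssumption} we have $x_m\geq x_k=\tfrac{c}{2^k}$ for every $m<k$, so any subsum of $(x_n)$ that uses an index smaller than $k$ is at least $\tfrac{c}{2^k}$. Hence for every $y\in\big[0,\tfrac{c}{2^k}\big)$ the representations of $y$ by $(x_n)$ are precisely the representations of $y$ by $(x_n)_{n\geq k}$; writing $g$ for the cardinal function of $(x_n)_{n\geq k}$, this gives $f(y)=g(y)$ on $\big[0,\tfrac{c}{2^k}\big)$.

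Next I would use that cardinal functions are invariant under positive rescaling and reindexing, so $g$ is the image of the function in Lemma~\ref{lematdodwojkowych} under the dilation by $\tfrac{c}{2^{k-1}}$. That lemma says the latter equals $2$ at every dyadic point of $(0,1)$, hence $g$ equals $2$ at every point of $\tfrac{c}{2^{k-1}}\big(\mathcal{D}\setminus\{0,1\}\big)$. It remains to pick one such point below $\tfrac{c}{2^k}$: the choice $y_0:=\tfrac{c}{2^{k+1}}=\tfrac{c}{2^{k-1}}\cdot\tfrac14$ works, being a scaled dyadic number, an interior point of $[0,r_{k-1}]$, and strictly less than $x_k$. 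Then $f(y_0)=g(y_0)=2$, so $2\in\rng(f)$.

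The argument is essentially an assembly of earlier results, so there is no serious obstacle; the one point requiring a little care is the final selection of $y_0$: it must simultaneously be a \emph{proper} scaled dyadic point (so that Lemma~\ref{lematdodwojkowych} gives value $2$ rather than $1$) and lie strictly below $x_k$ (so that the identity $f=g$ applies there). The value $\tfrac14$ of the unit-interval dyadic is the simplest choice meeting both constraints.
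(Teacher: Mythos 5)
Your proof is correct and follows essentially the same route as the paper: invoke Corollary~\ref{geometrycznyogon} to get a geometric tail, note that monotonicity forces $f=g$ below $x_k$, and read off the value $2$ at a scaled dyadic point via Lemma~\ref{lematdodwojkowych}. Your explicit choice $y_0=\frac{c}{2^{k+1}}$ is a small but welcome tightening of the paper's final sentence, which leaves the selection of a suitable dyadic point below $x_k$ implicit.
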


Theorem \ref{musibycdwojka} allows us to exclude many sets from $\mathcal{I}_1$. At the end of the paper we present a table summarising which sets $M$ with $\max M\leq 6$ can occur as a ranges of cardinal functions of sequences with presribed type of the achievement set. Therefore, let us write down which sets with the largest element at most $6$ are excluded from $\mathcal{I}_{1}$.

\begin{corollary} \label{jedentrzyczteryprzedzial}
Sets $\{1,3,4\}$, $\{1,4,5\}$, $\{1,3,4,5\}$, $\{1,3,6\}$, $\{1,4,6\}$, $\{1,5,6\}$, $\{1,3,4,6\}$, $\{1,3,5,6\}$, $\{1,4,5,6\}$, $\{1,3,4,5,6\}$ do not belong to $\mathcal{I}_1$.
\end{corollary}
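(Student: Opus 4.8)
The plan is to read this off directly from Theorem \ref{musibycdwojka}. First I would note that every set $M$ in the list is finite, so a cardinal function whose range equals $M$ is bounded; in particular, if some $M$ from the list belonged to $\mathcal{I}_1$, there would be an interval-filling sequence $(x_n)$ with bounded cardinal function $f$ and $\rng(f)=M$. By Theorem \ref{musibycdwojka}, such an $f$ must satisfy $2\in\rng(f)=M$.

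Then I would simply check that none of the ten listed sets contains the element $2$: each of $\{1,3,4\}$, $\{1,4,5\}$, $\{1,3,4,5\}$, $\{1,3,6\}$, $\{1,4,6\}$, $\{1,5,6\}$, $\{1,3,4,6\}$, $\{1,3,5,6\}$, $\{1,4,5,6\}$, $\{1,3,4,5,6\}$ omits $2$. This contradicts the previous paragraph, so none of them lies in $\mathcal{I}_1$.

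There is essentially no obstacle here: the substance was already carried out in the chain Lemma \ref{dwasasiednie} $\Rightarrow$ Corollary \ref{geometrycznyogon} $\Rightarrow$ Theorem \ref{musibycdwojka}, which forces a geometric tail $x_n=\frac{c}{2^n}$ and hence forces $f$ to take the value $2$ on a dense set of scaled dyadic points. The corollary is stated separately only because these particular sets have maximal element at most $6$ and must be placed in the summary table of Section~10; the remaining $2$-free sets with maximum $\leq 6$ that are of the form $\{1,m\}$ are already excluded by Theorem \ref{niemozliwy}, and the purely-odd sets such as $\{1,3,5\}$ by the symmetry observation that a bounded range in $\mathcal{I}_1$ cannot consist of odd numbers only. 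The only thing worth double-checking when writing the proof is the trivial verification that each enumerated set indeed omits $2$.
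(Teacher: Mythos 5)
Your proposal is correct and matches the paper's own (implicit) argument exactly: the corollary is stated as an immediate consequence of Theorem \ref{musibycdwojka}, since each listed set is finite (hence bounded) and omits $2$. Nothing further is needed.
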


\begin{theorem}\label{jednaostra}
Let $(x_n)$ be an interval-filling sequence such that the strict inequality $r_k>x_k$ holds for exactly one number $k$. Then $\rng(f)$ is equal to either $\{1,2,3\}$ or $\{1,2,3,4\}$.
\begin{proof}
Lemma \ref{LemCritIntervFill} implies that $x_n=r_n$ for all natural numbers but $k$ and $x_k<r_k$. By Lemma \ref{dwasasiednie} the sequence $(x_n)$ looks as follows:
$$\left(2^{k-2}x_{k-1}, 2^{k-3}x_{k-1}, \ldots, 8x_{k-1} , 4x_{k-1} , 2x_{k-1} , x_{k-1}, x_k, x_{k+1}, \frac{x_{k+1}}{2},\frac{x_{k+1}}{4},\frac{x_{k+1}}{8}, \ldots\right). $$
Since the range of cardinal function would not change when scaling the sequence, we can divide all the terms by $2x_{k+1}$. Hence, we get the sequence
$$
\left(\frac{2^{k-3}x_{k-1}}{x_{k+1}},  \frac{2^{k-4}x_{k-1}}{x_{k+1}}, \ldots,  \frac{4x_{k-1}}{x_{k+1}} ,  \frac{2x_{k-1}}{x_{k+1}} ,  \frac{x_{k-1}}{x_{k+1}} , \frac{x_{k-1}}{2x_{k+1}}, \frac{x_k}{2x_{k+1}}, \frac{1}{2}, \frac{1}{4},\frac{1}{8},\frac{1}{16}, \ldots\right) 
$$ 
Denote this new sequence by $(y_n)$. More precisely, $y_n=\frac{x_{n}}{2x_{k+1}}$ for all $n\in\mathbb{N}$. Let us denote the sequence of sums for the tails of $(y_{n})$ by $(t_n)$, that is, $t_{n}:=\sum_{m>n}y_{m}$ for all $n$. Clearly $t_n=\frac{r_{n}}{2x_{k+1}}$ for all $n\in\mathbb{N}$.
Note that $y_k=\frac{x_k}{2x_{k+1}}<\frac{r_k}{2x_{k+1}}=t_k=1$ and $y_k=\frac{x_k}{2x_{k+1}}\geq \frac{x_{k+1}}{2x_{k+1}}=\frac{1}{2}$. Moreover, $t_{k-1}=y_k+t_k=y_k+1$ and $y_{k-1}=t_{k-1}=y_k+1$. Let us put $y:=y_k\in [\frac{1}{2},1)$. Hence, we can rewrite the previous sequence as 
$$
\left(2^{k-2}(y+1), 2^{k-3}(y+1), \ldots,  8(y+1) ,  4(y+1),  2(y+1) ,(y+1), y, \frac{1}{2}, \frac{1}{4},\frac{1}{8},\frac{1}{16}, \ldots\right).
$$
If $k=1$ this becomes $( y, \frac{1}{2}, \frac{1}{4},\frac{1}{8},\frac{1}{16}, \ldots)$ and the statement is an immediate consequence of Theorems \ref{dwojkowowymierna} and \ref{niedwojkowowymierna}.

Assume that $k>1$. Let $g$ and $f$ be the cardinal functions of $(y_n)_{n\geq k}$ and $(y_n)$ respectively.

Note that in the first part of the proof we showed that $\rng(g)$ is either $\{1,2,3\}$ or $\{1,2,3,4\}$. Now we show that $\rng(f)=\rng(g)$. Let $x\in \mathcal{A}(y_n)=\left[0,\sum_{n=1}^{\infty}y_n\right]=[0,2^{k-1}(y+1)]$. Consider the following four cases:
\begin{itemize}
\item If $x=a(y+1)$, where $a\in\{0,2^{k-1}\}$, then clearly $f(x)=1$.
\item If $x\neq a(y+1)$ for any natural $a$ then there exists natural number $c\in [0,2^{k-1}]$ such that $c(y+1)<x<(c+1)(y+1)$. If $x=\sum_{n\in A} y_n$, then the set $A$ satisfies $\emptyset\neq A\cap [k,\infty)\neq  [k,\infty)$, because otherwise $x$ is a natural multiple of $(y+1)$ since the first $k-1$ elements of $(y_n)$ and $\sum_{n\geq k} y_n=y+1$ are multiples of $y+1$. Hence we obtain two parts of $x$, that is $x=\sum_{n\in A} y_n=\sum_{n\in A\cap [1,k-1]} y_n+\sum_{n\in A\cap[k,\infty)} y_n$. The first summand is the natural multiple of $y+1$ while the second is less than $y+1$, which means that $c(y+1)=\sum_{n\in A\cap [1,k-1]} y_n$ or equivalently  $c=\sum_{n\in A\cap [1,k-1]} \frac{y_n}{y+1}$.
The number $c$ has a unique binary expansion, so there exists a unique binary sequence $(\varepsilon_n)_{n=1}^{k-1}$ such that $c(y+1)=\sum_{n=1}^{k-1}\varepsilon_{n}y_n$. Hence $x$ has the same number of representations as $x-c(y+1)$, so $f(x)=f(x-c(y+1))=g(x-c(y+1))$. 
\item If $x=a(y+1)$ for an odd number $a\in (0,2^{k-1})$ then one can easily check that $f(x)=2$.
\item If $x=a(y+1)$ for an even number $a\in (0,2^{k-1})$ then again $f(x)=2$.
\end{itemize}
This shows that $\rng(f)=\rng(g)\cup\{1,2\}=\rng(g)$ and therefore the result follows.
\end{proof}
\end{theorem}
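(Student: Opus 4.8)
\textbf{Proof proposal for Theorem \ref{jednaostra}.}

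The plan is to reduce the general situation to the two already-established base cases (Theorems \ref{dwojkowowymierna} and \ref{niedwojkowowymierna}) by a scaling-and-truncation argument, exactly along the lines sketched above. First I would invoke Lemma \ref{LemCritIntervFill} together with the hypothesis (exactly one strict inequality $r_k > x_k$) to conclude that $x_n = r_n$ for all $n \neq k$; then Lemma \ref{dwasasiednie} forces $x_{n} = 2x_{n+1}$ whenever \emph{both} $n$ and $n+1$ avoid the index $k$, which pins down the shape of the sequence: it is geometric with ratio $\tfrac12$ both before position $k-1$ and after position $k$, with one ``anomalous'' term $x_k \in [x_{k+1}, r_k)$ wedged in. Rescaling by a positive constant (which does not change $\rng(f)$, by the remark used repeatedly in Section 2) I normalise to the sequence $(2^{k-2}(y+1),\ldots,4(y+1),2(y+1),(y+1),y,\tfrac12,\tfrac14,\ldots)$ with $y := y_k \in [\tfrac12,1)$. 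If $k=1$ the sequence is literally $(y,\tfrac12,\tfrac14,\ldots)$ and Theorems \ref{dwojkowowymierna} and \ref{niedwojkowowymierna} apply verbatim (the dichotomy $y\in\mathcal D$ versus $y\notin\mathcal D$ giving $\{1,2,3,4\}$ or $\{1,2,3\}$).

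The substance is the case $k>1$, where I must show that prepending the extra ``large'' terms $2^{j}(y+1)$ does not enlarge the range. Write $f$ for the cardinal function of the full sequence $(y_n)$ and $g$ for that of the tail $(y_n)_{n\ge k}$; the first part of the proof already gives $\rng(g)\in\{\{1,2,3\},\{1,2,3,4\}\}$. The key observation is that each of the first $k-1$ terms, and also the full tail sum $\sum_{n\ge k}y_n = y+1$, is an integer multiple of $y+1$. Hence for any $x\in\mathcal A(\mathbf y) = [0,2^{k-1}(y+1)]$ and any representing set $A$, splitting $A = (A\cap[1,k-1]) \cup (A\cap[k,\infty))$ writes $x$ as $c(y+1) + s$ where $c\in\{0,1,\ldots,2^{k-1}\}$ is determined by the binary digits of $A\cap[1,k-1]$ and $s\in[0,y+1]$ is a subsum of the tail. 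I would argue that $c$ is \emph{forced} by $x$: if $x$ is not an integer multiple of $y+1$, then $c = \lfloor x/(y+1)\rfloor$ and, crucially, $c$ has a unique binary expansion of length $k-1$, so the ``front part'' of the representation is unique and $f(x) = g(x - c(y+1))$, landing in $\rng(g)$. The endpoints $x\in\{0,2^{k-1}(y+1)\}$ give $f(x)=1$. The remaining case is $x = a(y+1)$ for $0<a<2^{k-1}$: here both the ``carry'' options $s=0$ (front part sums to $a(y+1)$) and $s=y+1$ (front part sums to $(a-1)(y+1)$, tail contributes the full $y+1$) are available, and for each choice the front part is again uniquely determined by binary expansion while the tail contributes $g(0)=1$ or $g(y+1)=1$ respectively; a short check shows $f(a(y+1)) = 2$ in all such cases (whether $a$ is odd or even). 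Collecting the cases, $\rng(f) = \rng(g)\cup\{1,2\} = \rng(g)$, since $1,2\in\rng(g)$ always.

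The step I expect to need the most care is the claim that $c$ is uniquely recoverable from $x$ and that the front part is then forced — in other words, verifying that the interval decomposition $[0,2^{k-1}(y+1)] = \bigcup_{c=0}^{2^{k-1}-1}[c(y+1),(c+1)(y+1)]$ genuinely separates the possible values of the integer multiple, with overlaps only at the breakpoints $a(y+1)$. This uses that the tail subsums lie in $[0,y+1]$ with the two extreme values $0$ and $y+1$ attained (so the ``boundary'' points $a(y+1)$ are precisely where ambiguity in $c$ can arise) and that $y+1 > 1 \ge$ each normalised tail term, so no tail subsum can ``reach across'' a full copy of $y+1$ except by using the entire tail. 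The rest is the bookkeeping already displayed in the four-case split, plus the appeal to uniqueness of binary expansions of integers in $[0,2^{k-1})$. Once that is in place the theorem follows, and the final sentence ``$\{1,2,3\},\{1,2,3,4\}\in\mathcal I_1$'' is immediate from the $k=1$ base cases.
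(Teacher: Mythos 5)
Your proposal is correct and follows essentially the same route as the paper's own proof: the same normalisation to $\bigl(2^{k-2}(y+1),\ldots,(y+1),y,\tfrac12,\tfrac14,\ldots\bigr)$, the same reduction of $k=1$ to Theorems \ref{dwojkowowymierna} and \ref{niedwojkowowymierna}, and the same decomposition of a representation into a front part that is a forced integer multiple of $y+1$ (unique by binary expansion) plus a tail subsum, yielding $\rng(f)=\rng(g)$. Your explicit treatment of the breakpoints $x=a(y+1)$ via the two carry options $s=0$ and $s=y+1$ is in fact slightly more detailed than the paper's ``one can easily check''.
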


Theorem \ref{jednaostra} characterizes the interval-filling sequences for which only one remainder is strictly greater than the preceeding term. It is natural to ask what happens in the case of more strict inequalities. We can use Theorem \ref{niedwojkowowymierna} to construct sequences $(x_{n})_{n=1}^{\infty}$ with arbitrarily (but finitely) many strict inequalities but very small range of the cardinal functions.

\begin{example}\label{zawszerazdwatrzy}
For every $K$ there exists an interval-filling sequence $(x_{n})_{n=1}^{\infty}$ such that 
$$
\{ n\in\mathbb{N}\ |\ r_n>x_n\}= \{1,\ldots,K\},
$$
but its cardinal function $f$ satisfies $\rng(f)=\{1,2,3\}$.

Let us fix $K\in\mathbb{N}$ and let $x$ be any nondyadic number between $\frac{1}{2^K}$ and $\frac{1}{2^{K-1}}$. Define 
\begin{align*}
x_{n}:=\left\{\begin{array}{ll}
\frac{1}{2^n} & n\leq K-1, \\
x, & n=K, \\
\frac{1}{2^{n-1}} & n\geq K+1 .
\end{array}\right.
\end{align*}
By Theorem \ref{niedwojkowowymierna} we get $\rng(f)=\{1,2,3\}$. On the other hand, $\{ n\in\mathbb{N}\ |\ r_n>x_n\}=\{1,\ldots,K\}$. 
\end{example}

In Example \ref{zawszerazdwatrzy} we observed that if $(x_{n})$ is an interval-filling sequence such that the set $\{n\in\mathbb{N} \mid r_n>x_n\}$ is non-empty, then $\max\rng (f)\geq 3$. Let us now study the largest possible value of $\max\rng (f)$ over all such sequences. It turns out that this largest value is strictly connected with the cardinality of the set $\{n\in\mathbb{N} \mid r_n>x_n\}$.

\begin{theorem}\label{majoranta}
Let $(x_n)$ be an interval-filling sequence and let $f$ be its cardinal function. If 
\begin{align*}
\# \{n\in\mathbb{N} \mid r_n>x_n\} \leq k,
\end{align*}
then $\max \rng (f)\leq 2^{k+1}$
\end{theorem}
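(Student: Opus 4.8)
The plan is to induct on $k$. For the base case $k=0$ we have $x_n=r_n$ for all $n$, so by Corollary~\ref{CorSet(1,2)} the sequence is $(c/2^n)$ and $\rng(f)=\{1,2\}$, hence $\max\rng(f)=2=2^{0+1}$. For the inductive step, assume the bound for every interval-filling sequence with at most $k-1$ strict inequalities among the $r_n>x_n$. Given $(x_n)$ with $\#\{n:r_n>x_n\}\le k$: if this number is $\le k-1$ we are done by the inductive hypothesis, so assume it equals $k\ge1$ and let $m$ be the \emph{smallest} index with $r_m>x_m$. By Lemma~\ref{LemCritIntervFill} we have $x_n=r_n$ for all $n<m$, so Lemma~\ref{dwasasiednie} forces $x_n=2x_{n+1}$ for $n<m-1$; after rescaling (which does not change $\rng(f)$) we may assume $x_{m-1}=1$, whence $x_n=2^{m-1-n}$ for $1\le n\le m-1$, while $\sum_{n\ge m}x_n=r_{m-1}=1$ and $\sum_{n=1}^{\infty}x_n=2^{m-1}$.

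The heart of the argument is the estimate $\max\rng(f)\le\max\{2,\max\rng(f_m)\}$. To prove it, write a representation $x=\sum_{n\in A}x_n$ as $x=c+s$ with $c:=\sum_{n\in A,\,n<m}x_n\in\{0,1,\ldots,2^{m-1}-1\}$ and $s:=\sum_{n\in A,\,n\ge m}x_n\in[0,1]$. If $x\notin\mathbb{Z}$ then necessarily $c=\lfloor x\rfloor$ and $s=x-\lfloor x\rfloor$, and since $c$ has a \emph{unique} representation by $x_1,\ldots,x_{m-1}$ (these being exactly the powers $2^0,\ldots,2^{m-2}$), we get $f(x)=f_m(x-\lfloor x\rfloor)\le\max\rng(f_m)$. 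If $x\in\mathbb{Z}$ then $s\in\{0,1\}$, and since $f_m(0)=1$ and $f_m(1)=1$ — the latter because $1$ is the uniquely attained maximum of $\mathcal{A}((x_n)_{n\ge m})=[0,1]$ by Corollary~\ref{CorIntFillA} — a short case check gives $f(x)\le2$. (When $m=1$ the prefix is empty and the estimate is trivially true, since then $f_m=f$.)

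It remains to bound $\max\rng(f_m)$. The sequence $(x_n)_{n\ge m}$ is interval-filling (Corollary~\ref{CorIntFillA}) and has $r_m>x_m$ at its first index; removing $x_m$, the standard decomposition $f_m(y)=f_{m+1}(y)+f_{m+1}(y-x_m)$ (with $f_{m+1}$ extended by $0$ off its achievement set) gives $\max\rng(f_m)\le2\max\rng(f_{m+1})$. Now $(x_n)_{n\ge m+1}$ is interval-filling and has exactly $k-1$ strict inequalities (all strict inequalities of $(x_n)$ lie at indices $\ge m$, one of them being $m$ itself), so by the inductive hypothesis $\max\rng(f_{m+1})\le2^{k}$. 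Hence $\max\rng(f_m)\le2^{k+1}$, and since $k\ge1$ we conclude $\max\rng(f)\le\max\{2,2^{k+1}\}=2^{k+1}$.

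The step I expect to be the main obstacle is the estimate $\max\rng(f)\le\max\{2,\max\rng(f_m)\}$: one must carry out carefully the bookkeeping on the integer points $x\in\mathbb{Z}$ (the analogue of the case analysis at the end of the proof of Theorem~\ref{jednaostra}) and — more importantly — realise that one has to collapse the \emph{entire} geometric block $x_1,\ldots,x_{m-1}$ in a single stroke. Peeling its terms off one at a time would cost a factor $2$ per term, and such a block can be arbitrarily long while adding only a single unit to $k$, which would destroy the bound.
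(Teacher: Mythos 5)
Your proof is correct and follows essentially the same route as the paper: induction on the number of strict inequalities, peeling off the first strictly dominated term $x_m$ via the decomposition $f_m(y)=f_{m+1}(y)+f_{m+1}(y-x_m)$ at a cost of a factor $2$, and observing that the geometric prefix before index $m$ does not enlarge the range. The only differences are cosmetic: you start the induction at $k=0$ instead of invoking Theorem \ref{jednaostra} for $k=1$, and you spell out carefully (via the $c+s$ decomposition and uniqueness of binary representations) the prefix-collapsing step that the paper dispatches with ``Clearly, $M_1=M_2=\ldots=M_{n_1}$''.
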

\begin{proof}
We use induction on $k$. The case of $k=1$ is covered in Theorem \ref{jednaostra}. Assume that the assertion holds for all $k\leq m$ for some $m$. We will show that it also holds for $m+1$.

Let $\{n\in\mathbb{N} \mid r_n>x_n\}=\{n_1< n_2<\ldots<n_{m+1}\}$. By $f_{k}$ denote the cardinal function of the sequence $(x_{n})_{n>k}$, and let $M_k:=\rng(f_{k})$.
By the inductive assumption, $\max M_{n_1+1}<2^{m+1}$. Using the same idea as in the proof of Theorem \ref{dwojkowowymierna} one can show that $f_{n_1}(x)=f_{n_1+1}(x)+f_{n_1+1}(x-x_{n_{1}})$ for all $x$. Therefore, 
$$
\max M_{n_1}\leq \max\big(M_{n_{1}+1}+M_{n_{1}+1}\big) \leq \max M_{n_{1}+1}+\max M_{n_{1}+1}\leq 2^{m+1}+2^{m+1}=2^{m+2}.
$$  
Clearly, $M_1=M_2=\ldots=M_{n_1}$. This finishes the proof.
\end{proof}

Theorem \ref{majoranta} is the best possible in the sense, that for every $k$ there exists a sequence such that the largest value of range of its cardinal function is equal to $2^{k+1}$. We present it in the next example.

\begin{example}
Let $k\in\mathbb{N}$ and $(x_n^{(k)})=(3^{k-1},3^{k-1},3^{k-2},3^{k-2},3^{k-3},3^{k-3},\ldots,9,9,3,3,1,1,\frac{1}{2},\frac{1}{4},\frac{1}{8},\frac{1}{16},\ldots)$.
\\Let $d$ be any dyadic number in $(0,1)$. Put $x=d+\sum_{m=0}^{k-1} 3^m$. Each summand of $\sum_{m=0}^{k-1} 3^m$ appears twice in the sequence $(x_n^{(k)})$, so  $f(\sum_{m=0}^{k-1} 3^m)= 2^k$. Since $f(d)=2$, we get $f(x)=2^{k+1}$.
Finally note that the set $\{n \mid r_n^{(k)}>x_n^{(k)}\}$ is equal to $\{1,3,5,7,\ldots,2k-3,2k-1\}$, so it has $k$ elements.
\end{example}

To finish this section, let us provide a criterion implying $\max\rng (f)\geq 4$. This improves, under additional assumptions, the bound $\max\rng (f)\geq 3$ from Lemma \ref{conajmniejtrzykrotny}.

\begin{theorem}\label{dwieostre}
Let $(x_n)$ be an interval-filling sequence such that for two  indices $k$ and $m$ such that $k<m$ the following two strict inequalities hold:
\begin{align*}
x_{m} < r_{m} \ \ \ \ \textrm{ and } \ \ \ \ x_k+x_m<r_k.
\end{align*}
Then $\max \rng(f)\geq 4$. 
\end{theorem}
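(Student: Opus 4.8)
The plan is to mimic and combine the arguments of Lemmas~\ref{conajmniejtrzykrotny} and~\ref{pologona}: use the extra ``room'' coming from the two strict inequalities to exhibit a point of $\mathcal{A}(\mathbf{x})$ with four pairwise different representations. First I would exploit the inequality $x_m<r_m$ together with Lemma~\ref{LemCritIntervFill} and Corollary~\ref{CorIntFillA}: since $\mathcal{A}((x_n)_{n>m})=[0,r_m]$ and finite subsums are dense, one can pick a finite set $A\subseteq(m,\infty)$ with $\sum_{n\in A}x_n=:s\in(x_m,r_m)$ — in fact, because $x_k+x_m<r_k$ leaves slack, I would choose $s$ small enough that also $x_k+x_m<s+x_k<r_k$ stays true, so that various shifted quantities still lie inside $[0,r_k]$.

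Next I would build the target point $x:=x_1+\cdots+x_{k-1}+x_k+s$ (using all indices $1,\dots,k$, plus the block $A$), or a suitable variant, and produce four representations of it. Representation one: the index set $\{1,\dots,k\}\cup A$ itself. Representation two: replace the largest element $x_{n_i}$ of $A$ (where $A=\{n_1<\cdots<n_i\}$) by one of its representations $\sum_{n\in B}x_n$ with $B\subseteq(n_i,\infty)$, available again by Lemma~\ref{LemCritIntervFill}; this changes the index set but keeps the sum. Representation three: drop index $k$ and instead realize $x_k+s$ as a subsum of $(x_n)_{n>k}$ — this is possible because $x_k+s<r_k$, so $x_k+s\in[0,r_k]=\mathcal{A}((x_n)_{n>k})$; combined with $\{1,\dots,k-1\}$ this gives $x$ without using $k$. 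Representation four: drop index $k$ \emph{and} modify the tail part as in representation two, i.e.\ realize $x_k+s$ as a subsum of $(x_n)_{n>k}$ in a second way, or alternatively keep $k$ but realize $s+$ (something) differently so that the ``$k$ used / $k$ not used'' and ``tail realization A / tail realization B'' choices give a $2\times2$ grid of distinct index sets.

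The key point making these four index sets pairwise distinct is the standard bookkeeping used in Lemma~\ref{conajmniejtrzykrotny}: track whether index $k$ is used and whether index $n_i$ (the top of the block $A$) is used. The inequality $x_k+x_m<r_k$ is exactly what guarantees that after removing $k$ there is still enough tail mass beyond index $k$ to absorb $x_k$ together with the block, and $x_m<r_m$ is what lets the top element of the block be ``pushed further down the tail.'' I would finish by noting that all four sums equal $x$, hence $f(x)\geq 4$.

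The main obstacle I anticipate is organizing the four index sets so that they are genuinely pairwise distinct while all sitting inside $\mathbb{N}$ — in particular making sure the ``second tail realization'' used for representation four is not accidentally equal to the first one, and that the replacement of $x_{n_i}$ uses indices strictly larger than everything already present. This is the same delicacy as in Lemma~\ref{conajmniejtrzykrotny}, and I expect it to be handled by choosing $A$ minimal (so $n_i$ is well-defined and $x_{n_i}$ can be expanded using indices $>n_i$) and by choosing the finite subsum realizing $x_k+s$ to live on indices that one can perturb; the slack in $x_k+x_m<r_k$ is precisely the budget that makes all these choices simultaneously feasible.
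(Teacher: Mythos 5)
Your overall strategy is the paper's strategy, and three of your four representations are exactly the ones the paper uses: the finite block $A\subseteq(m,\infty)$ realizing $s\in(x_m,r_m)\cap(x_m,r_k-x_k)$, the variant obtained by pushing the top element $x_{n_i}$ further down the tail (this is where $x_m<r_m$ enters, via Lemma \ref{conajmniejtrzykrotny}), and the representation that drops $k$ entirely, which exists because $x_k+s<r_k$ forces $x_k+s\in[0,r_k]=\mathcal{A}((x_n)_{n>k})$. (Carrying the prefix $x_1+\cdots+x_{k-1}$ along is harmless but unnecessary; the paper works with $y=x_k+s$ directly.)

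The gap is your fourth representation. Neither of the two alternatives you offer is justified: a point of the open interval $(0,r_k)$ need not admit a second expression as a subsum of $(x_n)_{n>k}$ (for the sequence $(1/2^n)$ every non-dyadic point has a unique representation), and the realization $D$ of $x_k+s$ you obtain from interval-fillingness may be infinite, so the ``push the top index down'' trick is not available for it; the proposed $2\times 2$ grid therefore does not close. The missing idea is to use the index $m$ itself, which is the one ingredient of your setup you never exploit: since $s>x_m$, you have $s-x_m\in[0,r_m]=\mathcal{A}((x_n)_{n>m})$, so $s=x_m+\sum_{n\in C}x_n$ for some $C\subseteq(m,\infty)$, giving the representation $\{k,m\}\cup C$ of $y=x_k+s$. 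This is the only representation among the four that contains both $k$ and $m$ (your first two contain $k$ but not $m$, since $A,B\subseteq\{m+1,m+2,\ldots\}$; the last contains neither $k$), so pairwise distinctness is immediate and $f(y)\geq 4$. This is precisely why the target interval for $s$ has $x_m$ as its left endpoint --- a lower bound your proposal imposes but never uses.
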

\begin{proof}
For every $k$ let us denote by $f_{k}$ the cardinal function of $(x_{n})_{n>k}$.

Let $k$ and $m$ be the indices from the statement. It follows, that the set $(x_m,r_m)\cap (x_m,r_k-x_k)$ is non-empty. We can repeat the reasoning from the beginning of the proof of Lemma \ref{conajmniejtrzykrotny} to find $x\in (x_m,r_m)\cap (x_m,r_k-x_k)$ such that $f_{m}(x)\geq 2$. This means that there exist two different sets $A,B\subseteq\{m+1,m+2,\ldots\}$ of indices such that $x=\sum_{n\in A}x_n=\sum_{n\in B}x_n$. Let us put
\begin{align*}
y:=x_k+x=\sum_{n\in \{k\}\cup A}x_n=\sum_{n\in \{k\}\cup B}x_n.
\end{align*}

Since $x-x_m\in(0,r_m-x_m)\subseteq (0,r_m)=\mathcal{A}((x_{n})_{n>m})$ by Corollary \ref{CorIntFillA}, one can find a set $C\subseteq\{m+1,m+2,\ldots\}$ such that $x-x_m=\sum_{n\in C}x_n$. Hence,
\begin{align*}
y=x_k+x=x_k+x_m+(x-x_m)=\sum_{n\in \{k,m\}\cup C}x_n.
\end{align*}

In order to get the fourth representation of $y$ we use the assumption $x\in (x_m,r_k-x_k)$, which implies that $y=x_k+x\in (x_k+x_m,r_k)$. We have $y<r_k$, so there exists $D\subseteq\{k+1,k+2,\ldots\}$ such that $y=\sum_{n\in D}x_n$.

Let us show that the four representations defined above are distinct. Indeed the first and the second ones are different by the assumption. The third one is the only one containing both indices, $k$ and $m$. The last representation is the only one without the index $k$. Therefore $f(y)\geq 4$ as desired.
\end{proof}

Note that the sequence $(x_{n})$ constructed in Example \ref{zawszerazdwatrzy} satisfies for all $n\leq K-1$ the relation $r_n-x_n=x=x_K$. Furthermore, the largest index for which the remainder is greater than preceding term is $n=K$, that is, $r_K=\frac{1}{2^{K-1}}>x=x_K$.  Hence, the assumption of Theorem \ref{dwieostre} is not satisfied.

\section{Cardinality invariant sets}

In this section we introduce a new tool which allows us to study cardinal functions from a different point of view. To make the discussion precise, let us introduce do following operation of "adding" a number to a sequence.
\begin{definition}
Let $\textbf{x}=(x_n)$ be a sequence and $y\in\mathbb{R}$. We define $y\uplus \textbf{x}$ to be the sequence $(y_n)$ satisfying $y_1=y$ and $y_{n+1}=x_n$ for all $n\in\mathbb{N}$.
\end{definition}

Now, let us present a motivation. Denote $\textbf{b}:=(1/2^{n})_{n=1}^{\infty}$. In Lemma \ref{lematdodwojkowych} and Theorems \ref{dwojkowowymierna} and \ref{niedwojkowowymierna} we have seen that for every $y\in (0,1)$ the range of the cardinal function of $\mathbf{b}$ is different from the range of the cardinal function of $y\uplus\mathbf{b}$. On the other hand, it is easy to see that the ranges of the cardinal functions of sequences $\mathbf{b}$ and $y\uplus\mathbf{b}$ are the same if $y\geq 1$. A natural question arises, what happens if we replace the set $\mathbf{b}$ by different sequences $\mathbf{x}$, and  what the set of "allowed" real numbers $y$ that do not change the cardinal function can tell us about the sequence $\mathbf{x}$? This suggests the following definition.
\begin{definition}
Let $\textbf{x}=(x_n)$ be a sequence and $f$ be its cardinal function. By the {\it cardinality invariant set} for the sequence $\textbf{x}$ we mean
\begin{align*}
\Cis (\textbf{x}) := \big\{\ y>0\ \big| \textrm{ the cardinal function } g_{y} \textrm{ of } y\uplus \textbf{x} \textrm{ satisfies } \rng(g_{y})=\rng(f)\ \big\}.
\end{align*}
\end{definition}
It appears that the shape of the set $\Cis (\textbf{x})$ is connected with the boundedness of the cardinal function of $\mathbf{x}$.

From the definition, we immediately get the next lemma.

\begin{lemma}\label{trywialnefixy}
Let $\mathbf{x}=(x_n)$ be a 
sequence and $f$ be its cardinal function. Then 
\begin{enumerate}
\item $\left(\sum_{n=1}^{\infty}x_n,\infty\right)\subseteq \Cis (\mathbf{x})$,
\item $2\in\rng (f)$ if and only if $\sum_{n=1}^{\infty}x_n\in \Cis (\mathbf{x})$.
\end{enumerate}
\end{lemma}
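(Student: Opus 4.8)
The plan is to analyze directly, for a given $y>0$, the achievement set and cardinal function of $y\uplus\mathbf{x}$ by splitting every representation according to whether or not it uses the new first term $y$. Write $S:=\sum_{n=1}^{\infty}x_{n}$, so $\mathcal{A}(\mathbf{x})\subseteq[0,S]$, and recall that $0$ and $S$ are each obtained by a unique representation (the empty sum and the full sum respectively), since all $x_{n}>0$; thus $f(0)=f(S)=1$. A subset of the index set of $y\uplus\mathbf{x}$ either omits the index $1$, giving a subsum in $\mathcal{A}(\mathbf{x})$, or contains it, giving a subsum in $y+\mathcal{A}(\mathbf{x})$. Hence $\mathcal{A}(y\uplus\mathbf{x})=\mathcal{A}(\mathbf{x})\cup(y+\mathcal{A}(\mathbf{x}))$, and for each point $z$ the cardinal function $g_{y}$ of $y\uplus\mathbf{x}$ satisfies $g_{y}(z)=\alpha(z)+\beta(z)$, where $\alpha(z)=f(z)$ if $z\in\mathcal{A}(\mathbf{x})$ and $\alpha(z)=0$ otherwise (representations not using index $1$), and $\beta(z)=f(z-y)$ if $z-y\in\mathcal{A}(\mathbf{x})$ and $\beta(z)=0$ otherwise (representations using index $1$).

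For part (1), suppose $y>S$. Then $y+\mathcal{A}(\mathbf{x})\subseteq[y,y+S]$ is disjoint from $\mathcal{A}(\mathbf{x})\subseteq[0,S]$, so $\alpha$ and $\beta$ are never simultaneously nonzero: $g_{y}(z)=f(z)$ for $z\in\mathcal{A}(\mathbf{x})$ and $g_{y}(z)=f(z-y)$ for $z\in y+\mathcal{A}(\mathbf{x})$. Therefore $\rng(g_{y})=\rng(f)\cup\rng(f)=\rng(f)$, i.e. $y\in\Cis(\mathbf{x})$, which gives $\left(\sum_{n=1}^{\infty}x_{n},\infty\right)\subseteq\Cis(\mathbf{x})$.

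For part (2), take $y=S$. Now $\mathcal{A}(\mathbf{x})$ and $S+\mathcal{A}(\mathbf{x})$ meet exactly at the single point $S$ (the maximum of the former, and $S+0$ in the latter). For $z\in\mathcal{A}(\mathbf{x})$ with $z<S$ one gets $g_{S}(z)=f(z)$; for $z\in S+\mathcal{A}(\mathbf{x})$ with $z>S$ one gets $g_{S}(z)=f(z-S)$; and at $z=S$ both contributions occur, so $g_{S}(S)=f(S)+f(0)=1+1=2$. Hence $\rng(g_{S})=\rng(f)\cup\{2\}$, so $\rng(g_{S})=\rng(f)$ holds if and only if $2\in\rng(f)$; equivalently $\sum_{n=1}^{\infty}x_{n}\in\Cis(\mathbf{x})$ if and only if $2\in\rng(f)$.

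There is essentially no serious obstacle here; the proof is a routine bookkeeping argument. The only point that needs a little care is the boundary point $z=S$ in part (2), where one must invoke the uniqueness of the full-sum representation ($f(S)=1$) to conclude $g_{S}(S)=2$ rather than something larger — and this uniqueness is immediate from the positivity of the terms. (One may also observe that $y\uplus\mathbf{x}$ still respects the nonincreasing convention, since $y>S>x_{1}$ in part (1) and $y=S>x_{1}$ in part (2), although this is not actually needed because ranges of cardinal functions are invariant under reordering.)
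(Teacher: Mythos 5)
Your proof is correct and follows exactly the decomposition the paper has in mind: the paper simply states this lemma as immediate from the definition, and your write-up supplies the routine details (splitting representations by whether they use the new first term, and using $f(0)=f\left(\sum_{n=1}^{\infty}x_{n}\right)=1$ at the single overlap point). No issues.
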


The previous discussion implies that $\Cis(\mathbf{b})=[1,\infty)$. This shows that there exists a sequence for which the equality $\Cis (\textbf{x})=[\sum_{n=1}^{\infty}x_n,\infty)$ holds.

Lemma \ref{trywialnefixy} gives a good description of large numbers belonging to $\Cis (\textbf{x})$. It is more interesting to describe small values of in $\Cis (\textbf{x})$ in more details. At first, let us present examples showing that the cardinality invariant set can be larger than $\left[\sum_{n=1}^{\infty}x_n,\infty\right)$. That is, it is possible that $\min\Cis(\textbf{x}) < \sum_{n=1}^{\infty}x_n$ or even more interestingly, that $\Cis(\textbf{x}) = (0,\infty)\setminus\{\sum_{n=1}^{\infty}x_n\}$ (Example \ref{ExampleFixMax2}) or $\Cis(\textbf{x}) = (0,\infty)$ (Example \ref{ExampleFixMax3}). 

\begin{example}
Let $\textbf{x}=(x_n)$ be a sequence such that $x_1$ is a non-dyadic number and $x_{n+1}=\frac{1}{2^n}$ for all $n\in\mathbb{N}$. Clearly $\sum_{n=1}^{\infty}x_n=x_1+1\in \Cis (\textbf{x})$. Now we show that also $1\in \Cis (\textbf{x})$. At first, note that the range of cardinal function is fixed up to homogenity of terms. In particular, the range of the cardinal function of a sequence $(y_n)$ is the same as of $(\frac{1}{2}y_n)$. Moreover, the range of the cardinal function does not depend on the order of terms in a sequence. Therefore, the range of the cardinal function of $1\uplus \textbf{x} = (1,x_{1},x_{2},\ldots )$ is the same as the range of the function of $(\frac{x_{1}}{2},\frac{1}{2},\frac{1}{4},\ldots )$, which is equal to $\{1,2,3\}$ by Theorem \ref{niedwojkowowymierna} (since $x_{1}$ and hence $\frac{x_{1}}{2}$ is assumed to be non-dyadic).
\end{example}


\begin{example}\label{ExampleFixMax2}
Let $\mathbf{x}=(x_{n})$ be defined as $x_{2n-1}=x_{2n}=\frac{1}{2^n}$ for all $n\in\mathbb{N}$, and let $f$ be the cardinal function of $\mathbf{x}$. It was shown in \cite{cardfun} that $\mathcal{A}(\mathbf{x})=[0,2]$, $f(0)=f(2)=1$ and $f(x)=\mathfrak{c}$ for all $x\in (0,2)$. Thus $\rng(f)=\{1,\mathfrak{c}\}$. By Lemma \ref{trywialnefixy} we know that $(2,\infty)\subseteq \Cis (\textbf{x})$ and $2\notin \Cis (\textbf{x})$. Let $y\in (0,2)$ and consider $\mathbf{y}:= y\uplus\mathbf{x}$. By $g$ denote the cardinal function of $\mathbf{y}=(y_n)$. Then we have $A(\mathbf{y})=[0,2+y]$ and the following equalities hold:
\begin{displaymath}
g(x) = f(x)+f(x-y)= \left\{ \begin{array}{ll} 
1+0=1, & \textrm{if $x=0$},\\ 
\mathfrak{c}+0=\mathfrak{c}, & \textrm{if $x\in(0,y)$}, \\  
\mathfrak{c}+1= \mathfrak{c}, & \textrm{if $x=y$},\\  
\mathfrak{c} +\mathfrak{c}= \mathfrak{c}, & \textrm{if $x\in (y,2)$},\\ 
1+ \mathfrak{c}= \mathfrak{c}, & \textrm{if $x=2$},\\ 
0+ \mathfrak{c}= \mathfrak{c}, & \textrm{if $x\in (2,2+y)$},\\ 
0+1=1, & \textrm{if $x=2+y$}.
\end{array} \right.
\end{displaymath}
Thus $\rng(g)=\{1,\mathfrak{c}\}=\rng(f)$. Hence, we obtain $\Cis (\textbf{x})=(0,2)\cup(2,\infty)$, which shows that there exists a sequence for which the equality $\Cis (\textbf{x})=(0,\infty)\setminus \{\sum_{n=1}^{\infty}x_n\}$  holds.
\end{example}

\begin{example}\label{ExampleFixMax3}
Now we construct a sequence $\mathbf{y}=(y_n)$ for which $\Cis (\textbf{y})=(0,\infty)$. The example comes from \cite{cardfun}. Let $(x_n)$ satisfies the following property \eqref{PropertyA}: 
\begin{equation}\label{PropertyA}
x_n>\sum_{i=n+1}^{\infty} (x_i+r_{i-1}) = \sum_{i=n+1}^{\infty} (i-n+1)x_{i} \tag{A}
\end{equation}
for all $n\in\mathbb{N}$. Let further a sequence $(y_{n})$ be define by the equalities:
\begin{align*}
y_{2n-1} = r_{n-1} \ \ \ \ \textrm{ and } \ \ \ \ y_{2n} = x_{n}
\end{align*}
for each $n\in\mathbb{N}$, and $f$ be the cardinal function of $(y_n)$. By \cite[Theorem 2.2]{cardfun} we know that $\rng(f)=\mathbb{N}\cup\{\omega,\mathfrak{c}\}$, that is, the largest possible. From the definition, it is easy to see that $(x_n)_{n\geq k}$ satisfies the property \eqref{PropertyA} for any $k\in\mathbb{N}$. Hence, for all $k\in\mathbb{N}$ we get that $\rng(f_{2k})=\mathbb{N}\cup\{\omega,\mathfrak{c}\}$, where $f_{2k}$ is the cardinal function of $(y_n)_{n\geq 2k+1}$. 

Clearly $\Cis (\textbf{y})\supseteq [\sum_{n=1}^{\infty}y_n,\infty)$. Let $y_0\in(0,\sum_{n=1}^{\infty}{y_n})$ and $g$ be the cardinal function of $(y_n)_{n\geq 0}$. Then there exists $k\in\mathbb{N}$ such that 
\begin{align*}
y_0>\sum_{n=2k+1}^{\infty}y_{n}
\end{align*}
(note that the convergence of $\sum_{n=1}^{\infty}y_{n}$ is guaranteed by the property \eqref{PropertyA} of $(x_n)$). Thus $g\vert_{\A((y_n)_{n\geq 2k+1}})$ is equal to $f_{2k}$. Hence $\rng(g)\supseteq \rng(f_{2k})$, so
\begin{align*}
\rng(g)=\mathbb{N}\cup\{\omega,\mathfrak{c}\}=\rng(f),
\end{align*}
which means that $y_0\in \Cis (\textbf{y})$. Therefore, $\Cis (\textbf{y})=(0,\infty)$.
\end{example}

Now we prove some results relating the set $\Cis (\mathbf{x})$ with some properties of the cardinal function of $\mathbf{x}$.

\begin{proposition}\label{zawieranaturalne}
Let $\mathbf{x}=(x_{n})$ be a sequence and $f$ be its cardinal function. If $\Cis (\mathbf{x})=(0,\infty)$, then $\rng (f)\supseteq\mathbb{N}$. 
\end{proposition}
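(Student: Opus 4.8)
The plan is to prove by induction on $n$ that $n\in\rng(f)$ for every $n\in\mathbb{N}$. The base case $n=1$ holds for every sequence of positive terms, since the empty subsum is the unique representation of $0$, so that $f(0)=1$ and $1\in\rng(f)$.

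For the inductive step I would use the additivity identity for cardinal functions under $\uplus$ that was already exploited (for the first term) in the proof of Theorem~\ref{dwojkowowymierna}: if $y>0$ and $g_{y}$ denotes the cardinal function of $y\uplus\mathbf{x}$, then $g_{y}(t)=f(t)+f(t-y)$ for every $t$, with the convention that $f(u)=0$ for $u\notin\mathcal{A}(\mathbf{x})$. Assume $n\in\rng(f)$ and pick a witness $x^{*}\in\mathcal{A}(\mathbf{x})$ with $f(x^{*})=n$: take $x^{*}=\sum_{k=1}^{\infty}x_{k}$ if $n=1$ (this is positive, the sequence being nonempty), and observe that for $n\geq 2$ every such $x^{*}$ is already positive, since $f(x^{*})=n\neq 1=f(0)$ forces $x^{*}\neq 0$ and every element of $\mathcal{A}(\mathbf{x})$ other than $0$ is positive. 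In either case $x^{*}>0$, hence $x^{*}\in(0,\infty)=\Cis(\mathbf{x})$, so $\rng(g_{x^{*}})=\rng(f)$. Substituting $t=y=x^{*}$ in the identity gives $g_{x^{*}}(x^{*})=f(x^{*})+f(0)=n+1$, so $n+1\in\rng(g_{x^{*}})=\rng(f)$. This closes the induction and yields $\mathbb{N}\subseteq\rng(f)$.

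The only step requiring a little care is the additivity identity itself, and it is routine: a subset of the index set of $y\uplus\mathbf{x}=(y,x_{1},x_{2},\ldots)$ either omits the new first coordinate, in which case the corresponding subsums equal to $t$ are counted by $f(t)$, or it contains that coordinate, in which case the remaining ones must sum to $t-y$, contributing $f(t-y)$; there is no overcounting even when $y$ coincides with some $x_{k}$, because one enumerates subsets of indices rather than of values. I do not expect a genuine obstacle here: the entire content of the proof is the substitution $y=t=x^{*}$, which turns the hypothesis $\Cis(\mathbf{x})=(0,\infty)$ into the implication that whenever $n$ is attained as a value of $f$, so is $n+1$. (For $n=1$ this recovers $2\in\rng(f)$ via Lemma~\ref{trywialnefixy}(2), since $\sum_{k=1}^{\infty}x_{k}\in\Cis(\mathbf{x})$.)
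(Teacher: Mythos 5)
Your proof is correct and follows essentially the same route as the paper's: take a witness $y$ with $f(y)=k$, pass to $y\uplus\mathbf{x}$, observe that the value at $y$ jumps to $k+1$, and use $\Cis(\mathbf{x})=(0,\infty)$ to pull $k+1$ back into $\rng(f)$. You are in fact slightly more careful than the paper in ensuring the witness is strictly positive (so that it actually lies in $\Cis(\mathbf{x})$), a point the paper's proof glosses over.
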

\begin{proof}
Clearly $1\in \rng (f)$. We prove that if $k\in \rng (f)$ for some $k\in\mathbb{N}$, then $k+1\in \rng (f)$. Let $y\in \mathcal{A}(x_n)$ be such that $f(y)=k$. Consider the sequence $\mathbf{y}:=y\uplus \mathbf{x}$ and let $g$ be its cardinal function. Then easily $g(y)=k+1$, and thus $k+1\in \rng(g)$. But $y\in \Cis (\textbf{x})$, so $k+1\in \rng(f)$. 
\end{proof}

\begin{corollary} 
Let $\mathbf{x}$ be a sequence with bounded cardinal function. Then $\Cis (\mathbf{x})\subsetneq (0,\infty)$.
\end{corollary}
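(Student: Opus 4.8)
The statement to prove is the contrapositive of Proposition~\ref{zawieranaturalne}: if $\mathbf{x}$ has a bounded cardinal function, then $\Cis(\mathbf{x}) \subsetneq (0,\infty)$. The plan is to argue directly by contraposition, invoking Proposition~\ref{zawieranaturalne} as a black box. Suppose toward a contradiction that $\Cis(\mathbf{x}) = (0,\infty)$; then Proposition~\ref{zawieranaturalne} immediately gives $\rng(f) \supseteq \mathbb{N}$, which is unbounded, contradicting the hypothesis that $f$ is bounded. Hence $\Cis(\mathbf{x})$ cannot equal all of $(0,\infty)$, and since $\Cis(\mathbf{x}) \subseteq (0,\infty)$ always holds by definition, we conclude $\Cis(\mathbf{x}) \subsetneq (0,\infty)$.

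The only subtlety worth a sentence is why $\rng(f) \supseteq \mathbb{N}$ contradicts boundedness: boundedness of $f$ means there is $N \in \mathbb{N}$ with $f(x) \le N$ for all $x \in \mathcal{A}(\mathbf{x})$, so $\rng(f) \subseteq \{1,\ldots,N\}$ is finite, whereas $\mathbb{N}$ is infinite. There is no real obstacle here — the corollary is a one-line consequence of the preceding proposition — so the proof is essentially just the citation of Proposition~\ref{zawieranaturalne} together with the trivial observation that $\Cis(\mathbf{x}) \subseteq (0,\infty)$ by the definition of the cardinality invariant set (it consists of positive reals $y$). I would write:

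\begin{proof}
If $\Cis(\mathbf{x}) = (0,\infty)$, then by Proposition~\ref{zawieranaturalne} we would have $\rng(f) \supseteq \mathbb{N}$, contradicting the boundedness of the cardinal function $f$. Since $\Cis(\mathbf{x}) \subseteq (0,\infty)$ always holds, the inclusion must be proper.
\end{proof}
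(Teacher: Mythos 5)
Your proof is correct and is exactly the argument the paper intends: the corollary is stated as an immediate consequence of Proposition~\ref{zawieranaturalne} (the paper gives no separate proof), obtained by the contrapositive together with the observation that $\Cis(\mathbf{x})\subseteq (0,\infty)$ by definition.
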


\begin{proposition}
Let $\mathbf{x}$ be a sequence such that its cardinal function $f$ is bounded. Then for every $t\in \Cis (\mathbf{x})\cap\mathcal{A}(\mathbf{x})$ we have $f(t)<\max\rng (f)$.
\end{proposition}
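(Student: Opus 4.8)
The plan is to use the defining property of $\Cis(\mathbf{x})$ together with an explicit computation of the cardinal function of $t\uplus\mathbf{x}$ at the point $t$ itself. Fix $t\in\Cis(\mathbf{x})\cap\mathcal{A}(\mathbf{x})$ and let $g$ be the cardinal function of $\mathbf{y}:=t\uplus\mathbf{x}=(t,x_1,x_2,\ldots)$. By the definition of the cardinality invariant set we have $\rng(g)=\rng(f)$, and this is the only input from the hypothesis $t\in\Cis(\mathbf{x})$ that we shall use.

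The main computation is the identity $g(t)=f(t)+1$, obtained by splitting the subsum representations of $t$ in $\mathbf{y}$ according to whether the first term $y_1=t$ is used. If it is not used, the remaining subsum must equal $t$ and is formed from the terms of $\mathbf{x}$, so there are exactly $f(t)$ such representations. If it is used, the remaining subsum from $\mathbf{x}$ must equal $t-t=0$; since every term of $\mathbf{x}$ is positive, the only possibility is the empty sum, contributing exactly one further representation. Hence $g(t)=f(t)+1$.

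To conclude: since $t\in\mathcal{A}(\mathbf{x})$ we have $f(t)\geq 1$, so $f(t)+1=g(t)$ is a natural number belonging to $\rng(g)=\rng(f)$. As $f$ is bounded, $\max\rng(f)$ is a finite natural number, and therefore $f(t)+1\leq\max\rng(f)$, i.e. $f(t)<\max\rng(f)$, as required.

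There is essentially no obstacle here; the only points requiring a little care are the bookkeeping in the identity $g(t)=f(t)+1$ — in particular the fact that $0$ admits only the empty subsum representation precisely because the terms of $\mathbf{x}$ are positive — and the observation that the statement holds vacuously when $\rng(f)=\{1\}$: in that case any $t\in\Cis(\mathbf{x})\cap\mathcal{A}(\mathbf{x})$ would force $f(t)+1\in\rng(f)=\{1\}$, hence $f(t)=0$, contradicting $t\in\mathcal{A}(\mathbf{x})$, so the intersection is empty.
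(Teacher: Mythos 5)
Your proof is correct and is essentially the paper's argument: both rest on the identity $g(t)=f(t)+f(0)=f(t)+1$ for the cardinal function $g$ of $t\uplus\mathbf{x}$, combined with $\rng(g)=\rng(f)$. You phrase it directly (showing $f(t)+1\in\rng(f)$, hence $f(t)<\max\rng(f)$) where the paper argues by contradiction from $f(t)=\max\rng(f)$, but this is the same computation.
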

\begin{proof}
Suppose that there exists $y\in \Cis (\mathbf{x})$ such that $f(y)=\max\rng (f)$. Let $g$ be the cardinal function of the sequence $y\uplus \textbf{x}$. Then
\begin{align*}
g(y)=f(y)+f(0) = \max\rng (f) + 1 >\max\rng (f).
\end{align*}
Therefore, $\rng (f)\neq\rng (g)$ contradicting the assumption $y\in\Cis(\mathbf{x})$.
\end{proof}



Let $\textbf{x}$ be a sequence with cardinal function $f$. In Proposition \ref{zawieranaturalne} we  showed in particular that if $\Cis (\textbf{x})=(0,\infty)$, then $\rng (f)$ is unbounded. 
Let us recall some open and still unsolved problems connected with the notion of unboundedness of $\rng (f)$ posed in \cite{cardfun}.
At first, note that $\rng (f)$ is unbounded if and only if it contains infinitely many natural numbers or if it contains $\omega$ or $\mathfrak{c}$. We still do not know if the infiniteness of $\rng (f)$ implies that at least one of $\omega$ or $\mathfrak{c}$ belongs to $\rng (f)$ \cite[Problem 2.12]{cardfun}. The questions if $\omega\in \rng (f)$ implies that $\rng (f)$ is infinite or that $\mathfrak{c}\in \rng (f)$ remain without answers \cite[Problems 2.9 and 2.13]{cardfun}. On the other hand, there exist examples for which $M=\{1,\mathfrak{c}\}$ or $M=\mathbb{N}\cup\{\mathfrak{c}\}$ (for example in \cite[Proposition 2.5]{cardfun}).

\section{Intervals and finite sums of closed intervals}

In this section, we will continue using the following notation. If $f$ is the cardinal function of $(x_n)_{n\geq 0}$, then $f_{k}$ is the cardinal function of $(x_n)_{n\geq k}$.

The results in Theorem \ref{musibycdwojka} can be restated as follows:
\\\emph{Let $M\in\mathcal{I}_1$. Then at least one of the conditions hold: $2\in M$ or $M$ is unbounded.}  
\\Now we prove the same property for $\mathcal{I}$. 

\begin{theorem}\label{ThmMusibycdwojka2}
Let $M\in\mathcal{I}$. Then at least one of the following hold:
\begin{itemize}
\item $2\in M$;
\item $M$ is unbounded.
\end{itemize}
\end{theorem}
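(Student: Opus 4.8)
The plan is to bootstrap from the single-interval case (Theorem \ref{musibycdwojka}) by restricting to a tail of the sequence on which the earlier one-interval machinery applies. Fix $\mathbf{x}=(x_n)$ with $\mathcal{A}(\mathbf{x})$ a finite union of closed intervals and $\rng(f)=M$; if $M$ is unbounded we are done, so assume $\max M=K<\infty$.

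First I would invoke Kakeya's theorem (Theorem \ref{ThmKakeya}): because $\mathcal{A}(\mathbf{x})$ is a finite union of compact intervals, the inequality $x_n\le r_n$ holds for all but finitely many $n$, say for all $n\ge N$. Then Lemma \ref{LemCritIntervFill} shows that the tail $(x_n)_{n\ge N}$ is interval-filling, with achievement set $[0,r_{N-1}]$ by Corollary \ref{CorIntFillA}. Next I would observe that the cardinal function $f_N$ of this tail is still bounded: any representation of a point $x\in\mathcal{A}((x_n)_{n\ge N})$ using only indices $\ge N$ is in particular a representation of $x$ as a subsum of $\mathbf{x}$, so $f_N(x)\le f(x)\le K$. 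Applying Corollary \ref{geometrycznyogon} to the interval-filling sequence $(x_n)_{n\ge N}$ then produces $k\ge N$ and a nonzero $c$ with $x_n=c/2^n$ for all $n\ge k$.

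It remains to exhibit a point where $f$ equals $2$, and here I would argue exactly as in the proof of Theorem \ref{musibycdwojka}: by the monotonicity assumption \eqref{IneqAssumption} each of $x_1,\dots,x_{k-1}$ is at least $x_k$, so (all terms being positive) no subsum of $\mathbf{x}$ lying in $[0,x_k)$ can involve an index below $k$; hence $f$ coincides on $[0,x_k)$ with the cardinal function of the geometric sequence $(c/2^n)_{n\ge k}$, which by Lemma \ref{lematdodwojkowych} (after rescaling by $2^{k-1}/c$) takes the value $2$ at the dyadic point $x_{k+1}=c/2^{k+1}\in(0,x_k)$. Thus $2\in M$.

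There is no real obstacle in this argument: the one thing that needs a moment's care is that the interval-filling object one feeds into the earlier results is the tail $(x_n)_{n\ge N}$, not $\mathbf{x}$ itself (whose achievement set need not be a single interval), together with the elementary inequality $f_N\le f$ that transfers boundedness from $\mathbf{x}$ to the tail. Everything else is a direct citation of Theorems \ref{ThmKakeya}, \ref{musibycdwojka}, Corollary \ref{geometrycznyogon}, and Lemma \ref{lematdodwojkowych}.
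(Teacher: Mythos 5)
Your proof is correct and follows essentially the same route as the paper: both arguments use Kakeya's theorem to extract an interval-filling tail and then reduce to the single-interval case. The only cosmetic difference is that the paper decomposes $\mathcal{A}(\mathbf{x})$ into disjoint translates of the tail's interval to get $f\mid_{\mathcal{A}_{k+1}(\mathbf{x})}=f_{k+1}$ and then cites Theorem \ref{musibycdwojka} as a black box, whereas you transfer boundedness to the tail via the inequality $f_{N}\leq f$ and then unpack that theorem's proof (Corollary \ref{geometrycznyogon} plus Lemma \ref{lematdodwojkowych} on $[0,x_{k})$) to exhibit a point with $f=2$ directly.
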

\begin{proof}
Note that the claim has already been proved in the case of a single interval in Theorem \ref{musibycdwojka}.

Let $\mathbf{x}=(x_n)$ be any sequence with $\mathcal{A}(\mathbf{x})$ being a finite union of at least two closed intervals. 
From Theorem \ref{ThmKakeya} we know that this is equivalent to the condition that $x_n\leq r_n$ holds for all but finitely many $n$.

Let $k$ be the minimal index for which $x_n\leq r_n$ holds for every $n>k$. Let us denote 
\begin{align*}
F_k=\left\{\ \sum_{n=1}^{k}\varepsilon_n x_n\  \Bigg|\ (\varepsilon_n)\in\{0,1\}^k\ \right\}=\{\sigma_0<\sigma_1<\ldots<\sigma_p\}
\end{align*}
and
\begin{align*}
\mathcal{A}_{k+1}(\mathbf{x})=\left\{\ \sum_{n=k+1}^{\infty}\varepsilon_n x_n\ \ \Bigg|\ (\varepsilon_n)\in\{0,1\}^{\mathbb{N}}\ \right\}=\bigg[0,\sum_{n=k+1}^{\infty} x_n \bigg]
\end{align*}
Then 
\begin{align*}
\mathcal{A}(\mathbf{x})=F_k + \mathcal{A}_{k+1}(\mathbf{x})=\bigcup_{i=0}^{p} (\sigma_i + \mathcal{A}_{k+1}(\mathbf{x}))= \mathcal{A}_{k+1}(\mathbf{x})\cup\bigcup_{i=1}^{p} (\sigma_i + \mathcal{A}_{k+1}(\mathbf{x})).
\end{align*}

Note that our choice of $k$ implies
\begin{align*}
\sigma_1 = x_k > r_{k} = \sum_{n=k+1}^{\infty}x_n,
\end{align*}
so the sets  $\mathcal{A}_{k+1}(\mathbf{x})$ and $\bigcup_{i=1}^{p} (\sigma_i + \mathcal{A}_{k+1}(\mathbf{x}))$ are disjoint.

Now observe that the achievement set of the sequence $(x_{n})_{n\geq k+1}$ is precisely the (single) interval $\mathcal{A}_{k+1}(\mathbf{x})$. Hence, we can apply Theorem \ref{musibycdwojka} to it and get that either $2\in \rng (f_{k+1})$ or $\rng (f_{k+1})$ is unbounded.

Observe, that $\mathcal{A}_{k+1}(\mathbf{x})\subseteq \mathcal{A}(\mathbf{x})$ and $f\mid_{\mathcal{A}_{k+1}(\mathbf{x})} = f_{k+1}$. Hence, $\rng (f_{k+1})\subseteq \rng (f)$ and the statement follows.


 
\end{proof}

Theorem \ref{ThmMusibycdwojka2} implies several interesting facts. We present some of them in the next corollary.

\begin{corollary}\label{CorManyCors}
\begin{enumerate}[label={(\arabic*)},ref={\thecorollary(\arabic*)}]
	\item For any $m\geq 3$ we have $\{1,m\}\notin \mathcal{I}$, \label{nienalezydosumy}
	\item The following inclusion holds: 
	\begin{align*}
	\mathcal{I}\subseteq \{M \ | \ \text{there exists} \ M_0\in \mathcal{I}_1 \ \text{such that} \ M\supseteq M_0 \}.
	\end{align*}		
	It other words, the ranges of cardinal functions being finite unions of closed intervals can only be found among the supersets of the ranges in the single interval case. \label{nadzbiory}
	\item Let $\mathbf{x}$ be a sequence such that $\mathcal{A}(\mathbf{x})$ is a finite union of closed intervals and let $M$ be the range of its cardinal function. If $M$ is bounded, then $2\in M$. \label{dwojkawsumachprzedzialow} 
\end{enumerate}
\end{corollary}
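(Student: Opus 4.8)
The plan is to derive all three items as consequences of Theorem~\ref{ThmMusibycdwojka2} together with facts already established in the excerpt. For item~(1), I would argue by contradiction: suppose $\{1,m\}\in\mathcal I$ for some $m\geq 3$. Then $\{1,m\}$ is bounded and does not contain $2$, contradicting Theorem~\ref{ThmMusibycdwojka2}. (One could also recall that $\{1,m\}\notin\mathcal I_1$ for $m\geq 3$ by Theorem~\ref{niemozliwy}, but the new statement now rules out the multi-interval case as well, which is exactly what Theorem~\ref{ThmMusibycdwojka2} buys us.)

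For item~(2), let $M\in\mathcal I$, so $M=\rng(f)$ for some sequence $\mathbf x=(x_n)$ whose achievement set is a finite union of closed intervals. Following the proof of Theorem~\ref{ThmMusibycdwojka2}, pick the minimal index $k$ such that $x_n\leq r_n$ for all $n>k$; then $\mathcal A((x_n)_{n\geq k+1})=[0,r_k]$ is a genuine single interval, so $M_0:=\rng(f_{k+1})\in\mathcal I_1$, and since $\mathcal A((x_n)_{n\geq k+1})\subseteq\mathcal A(\mathbf x)$ with $f$ restricting to $f_{k+1}$ on it, we get $M_0\subseteq M$. This is precisely the desired containment. If $\mathcal A(\mathbf x)$ happens to already be a single interval, take $M_0=M$.

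Item~(3) is an immediate restatement: if $M\in\mathcal I$ is bounded, Theorem~\ref{ThmMusibycdwojka2} forces $2\in M$ (the unboundedness alternative is excluded by hypothesis).

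I do not expect any genuine obstacle here, since all the work has been done in Theorem~\ref{ThmMusibycdwojka2}; the only point requiring a little care is item~(2), where one must make sure that the tail sequence used really does have a single interval (not a finite union) as its achievement set, which is guaranteed by the minimal choice of $k$ exactly as in the proof of Theorem~\ref{ThmMusibycdwojka2}. The corollary is therefore essentially a bookkeeping exercise packaging the theorem into three usable forms.
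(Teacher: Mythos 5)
Your proposal is correct and follows exactly the paper's route: the paper also derives item (1) as an immediate consequence of Theorem~\ref{ThmMusibycdwojka2}, item (2) from the decomposition in that theorem's proof (the minimal $k$ with $x_n\leq r_n$ for $n>k$ yielding a single-interval tail whose range lies in $\mathcal I_1$ and is contained in $M$), and item (3) as a mere rephrasing. No gaps.
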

\begin{proof}
	\begin{enumerate}
		\item Immediate consequence of Theorem \ref{ThmMusibycdwojka2}.
		\item This follows from the reasoning presented in the proof of Theorem \ref{ThmMusibycdwojka2}.
		\item This is a rephrased statement of Theorem \ref{ThmMusibycdwojka2}.
	\end{enumerate}
\end{proof}

Corollary \ref{dwojkawsumachprzedzialow} allows us to exclude from $\mathcal{I}$ the same sets as Theorem \ref{musibycdwojka} has done for $\mathcal{I}_1$. Some of them are mentioned in Corollary \ref{jedentrzyczteryprzedzial}.

\bigskip

Let us focus for a while on studying relations between the sets $\mathcal{I}$ and $\mathcal{I}_{1}$. Obviously, $\mathcal{I}_{1}\subseteq \mathcal{I}$. Lemma \ref{dolozskonczony} allows us to strengthen this inclusion. As a consequence, we show that some sets belong to $\mathcal{I}$.

\begin{corollary}\label{CorFI1inI}
	\begin{enumerate}[label={(\arabic*)},ref={\thecorollary(\arabic*)}]
	\item The following inclusion holds: 
	\begin{align*}
	\mathcal{I}\supseteq \mathcal{F}\cdot \mathcal{I}_1=\{A\cdot B\ |\ A\in \mathcal{F}, B\in \mathcal{I}_1\}.
	\end{align*}
	In particular, 
	$\{1,2,4\},\{1,2,3,6\}, \{1,2,3,4,6\}\in \mathcal{I}$. \label{sumaprzedzialowskonczonyprzedzial}
	\item If $M\in \mathcal{F}$, then $M\cup\{\mathfrak{c}\}\in \mathcal{I}$.
	\end{enumerate}
\end{corollary}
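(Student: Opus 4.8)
The plan is to prove both statements as consequences of Lemma \ref{dolozskonczony} together with facts about $\mathcal{F}$ recalled in the introduction. For part (1), the inclusion $\mathcal{I}\supseteq\mathcal{F}\cdot\mathcal{I}_1$ is immediate from Lemma \ref{dolozskonczony}: indeed $\mathcal{I}_1\subseteq\mathcal{I}$, hence $\mathcal{F}\cdot\mathcal{I}_1\subseteq\mathcal{F}\cdot\mathcal{I}=\mathcal{I}$. To obtain the three concrete examples, I would exhibit each set as a product $A\cdot B$ with $A\in\mathcal{F}$ and $B\in\mathcal{I}_1$. The natural choices are: $\{1,2,4\}=\{1,2\}\cdot\{1,2\}$, and here $\{1,2\}\in\mathcal{F}$ (take a one-term sequence, or more carefully a finite sequence realizing $\{1,2\}$ as in the introduction — e.g. $y_1=y_2$ so that the middle point has two representations), while $\{1,2\}\in\mathcal{I}_1$ via $x_n=2^{-n}$ (Corollary \ref{CorSet(1,2)}); next $\{1,2,3,6\}=\{1,2,3\}\cdot\{1,2\}$ with $\{1,2,3\}\in\mathcal{I}_1$ by Theorem \ref{niedwojkowowymierna} and $\{1,2\}\in\mathcal{F}$; and finally $\{1,2,3,4,6\}=\{1,2\}\cdot\{1,2,3\}$ — wait, that product is $\{1,2,3\}\cup\{2,4,6\}=\{1,2,3,4,6\}$, so again $\{1,2\}\in\mathcal{F}$ times $\{1,2,3\}\in\mathcal{I}_1$ works. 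One must double-check each elementwise product $A\cdot B=\{ab:a\in A,b\in B\}$ gives exactly the claimed set: $\{1,2\}\cdot\{1,2\}=\{1,2,4\}$ (since $1\cdot2=2\cdot1=2$), $\{1,2\}\cdot\{1,2,3\}=\{1,2,3,4,6\}$, and for $\{1,2,3,6\}$ one needs a set whose product with $\{1,2\}$ hits $6$ but not $4$; $\{1,3\}\cdot\{1,2\}=\{1,2,3,6\}$ does it, provided $\{1,3\}\in\mathcal{F}$ — which holds since $\{1,3\}\subseteq\mathbb{N}$ with $1$ in it is realizable by a short finite sequence, e.g.\ the achievement set of a suitable four-term sequence; alternatively $\{1,2,3\}\cdot\{1,2\}$ already equals $\{1,2,3,4,6\}$ not $\{1,2,3,6\}$, so the $\{1,3\}\cdot\{1,2\}$ factorization is the right one.

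For part (2), the statement ``if $M\in\mathcal{F}$ then $M\cup\{\mathfrak{c}\}\in\mathcal{I}$'' is exactly the fact recalled in the introduction (attributed to \cite{cardfun}: ``if $M\in\mathcal{F}$, then $M\cup\{\mathfrak{c}\}\in\mathcal{I}$''), so here I would simply cite it, or reprove it quickly: given a finite sequence $\mathbf{y}=(y_n)_{n=1}^k$ with $\rng(f_{\mathbf{y}})=M$ and $\mathcal{A}(\mathbf{y})=\{\sigma_0<\dots<\sigma_p\}$ with gap $\delta$, append a rescaled copy of the sequence $\mathbf{x}=(x_{2n-1}=x_{2n}=2^{-n})$ from Example \ref{ExampleFixMax2} whose achievement set is an interval $[0,2]$ with cardinal function $\equiv\mathfrak{c}$ on the interior; scaling it by a small enough $a$ so that $2a<\delta$ keeps the $p+1$ translated interval-copies disjoint, and one checks $\mathcal{A}$ becomes a finite union of $p+1$ closed intervals with $f$ taking value $1$ at the two global endpoints, $\mathfrak{c}$ on each interval interior, and the values of $M$ at the interior translated endpoints — wait, more carefully, we should instead put the finite block \emph{after} the interval block so that the finite structure sits on top: set $z_n=a y_n$ for $n\le k$ on top of the interval sequence, exactly as in the proof of Lemma \ref{dolozskonczony}, giving $\rng(f_{\mathbf{z}})=M\cdot\{1,\mathfrak{c}\}=M\cup\{\mathfrak{c}\}$, and since $\mathcal{A}$ is a finite union of intervals this lands in $\mathcal{I}$.

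I do not expect a serious obstacle: both parts are essentially bookkeeping on top of Lemma \ref{dolozskonczony}. The one point requiring genuine (if minor) care is verifying the three elementwise products in part (1) equal precisely the advertised sets, and confirming that the small ``seed'' sets $\{1,2\}$, $\{1,3\}$, $\{1,2,3\}$ are in the claimed classes ($\mathcal{F}$ or $\mathcal{I}_1$) — the $\mathcal{I}_1$ memberships come from Corollary \ref{CorSet(1,2)} and Theorem \ref{niedwojkowowymierna}, and the $\mathcal{F}$ memberships are elementary finite constructions. The only subtlety worth flagging explicitly in the write-up is that $\mathcal{F}\cdot\mathcal{I}_1\subseteq\mathcal{I}$ rather than $\subseteq\mathcal{I}_1$: multiplying an interval-range by a finite-range generally produces several disjoint interval-copies, so the product lies in $\mathcal{I}$ but need not lie in $\mathcal{I}_1$, which is why the corollary is phrased with $\mathcal{I}$ on the left.
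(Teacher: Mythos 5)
Your proposal is correct and follows essentially the same route as the paper: the inclusion comes from Lemma \ref{dolozskonczony} via $\mathcal{F}\cdot\mathcal{I}_1\subseteq\mathcal{F}\cdot\mathcal{I}=\mathcal{I}$, the three examples use the same factorizations $\{1,2\}\cdot\{1,2\}$, $\{1,3\}\cdot\{1,2\}$ and $\{1,2\}\cdot\{1,2,3\}$ (the paper realizes $\{1,2\}$, $\{1,3\}$, $\{1,2,3\}$ in $\mathcal{F}$ by the finite sequences $(1,1)$, $(1,1,1)$, $(1,1,2,3)$), and part (2) is exactly the observation that $\{1,\mathfrak{c}\}\in\mathcal{I}_1$ combined with part (1). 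Your self-correction on the factorization of $\{1,2,3,6\}$ lands on the paper's choice, so there is no gap.
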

\begin{proof}
	\begin{enumerate}
	\item By Lemma \ref{dolozskonczony} we quickly get
	\begin{align*}
		\mathcal{F}\cdot\mathcal{I}_{1} \subseteq \mathcal{F}\cdot\mathcal{I} = \mathcal{I}.
	\end{align*}
	
	The "In particular" part follows from the observation that $\{1,2\}\in \mathcal{I}_{1}$, $\{1,2\},\{1,3\},\{1,2,3\}\in\mathcal{F}$, and
		\begin{align*}
			\{1,2\}\cdot\{1,2\} = \{1,2,4\}, \ \ \ \{1,2\}\cdot \{1,3\} = \{1,2,3,6\}, \ \ \ \{1,2\}\cdot \{1,2,3\} = \{1,2,3,4,6\}.
		\end{align*}
		Indeed, $\{1,2\}\in\mathcal{I}_{1}$ by Lemma \ref{lematdodwojkowych}, and it is easy to verify that sets $\{1,2\}$, $\{1,3\}$ and $\{1,2,3\}$ are the ranges of the cardinal functions of (finite) sequences $(1,1)$, $(1,1,1)$ and $(1,1,2,3)$, respectively.
		\item Follows from the fact that $\{1,\mathfrak{c}\}\in \mathcal{I}_1$ and Corollary \ref{sumaprzedzialowskonczonyprzedzial}.
	\end{enumerate}
\end{proof}

Based on Corollary \ref{dolozskonczony}, it is natural to ask whether $\mathcal{I}=\mathcal{F}\cdot\mathcal{I}_{1}$? We were unable to find an answer. Therefore, we leave it as a problem for possible future research.

\begin{problem}\label{ProblemI=FI_1}
    Does the equality $\mathcal{I}=\mathcal{F}\cdot \mathcal{I}_1$ hold?
\end{problem}

The affirmative answer to Problem \ref{ProblemI=FI_1} may have an interesting consequence. In order to phrase it, let us introduce the following notion.

\begin{definition}
A set $P$ is called a {\it prime set} if the equality $P=A\cdot B$ implies that $A=\{1\}$ or $B=\{1\}$.
\end{definition}

\begin{example}
All of the sets $\{1\}, \{1,2\}, \{1,3\}, \{1,2,3\}, \{1,3,4\},\{1,2,3,4\}$ and $\{1,m\}$ for each $m\geq 4$ are prime sets. Set $\{1,2,4\}$ is not a prime set because  $\{1,2,4\}= \{1,2\}\cdot  \{1,2\}$.
\end{example}

\begin{proposition}
Let $P$ be a prime set. If $P\in \mathcal{F}\cdot \mathcal{I}_{1}$ then $P\in\mathcal{I}_{1}$.
\end{proposition}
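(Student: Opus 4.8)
The plan is to unwind the definitions. Suppose $P$ is a prime set and $P \in \mathcal{F}\cdot\mathcal{I}_1$. By definition of the product $\mathcal{F}\cdot\mathcal{I}_1$, there exist $A\in\mathcal{F}$ and $B\in\mathcal{I}_1$ with $P = A\cdot B$. Since $P$ is prime, the factorisation $P = A\cdot B$ forces $A = \{1\}$ or $B = \{1\}$.

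First I would treat the case $A = \{1\}$. Then $P = \{1\}\cdot B = B \in \mathcal{I}_1$, and we are done immediately. Second, consider the case $B = \{1\}$. Then $P = A\cdot\{1\} = A \in \mathcal{F}$. Now I need to know that $\{1\}\in\mathcal{I}_1$, which is clear (take the geometric sequence $x_n = 1/2^n$, or even more simply note that the one-term sequence gives the interval with cardinal function identically $1$; in any case the excerpt already records that the dyadic sequence has range $\{1,2\}$, but one checks directly that $\{1\}\in\mathcal{C}$ only — so I must be slightly more careful here). Actually, the cleanest route is: $P = A \in \mathcal{F}$ means $P$ is a finite set, and we still have $P = A\cdot B$ with $B\in\mathcal{I}_1$; but we have not yet used that $B$ need not be $\{1\}$. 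Let me reorganise: the hypothesis gives \emph{some} factorisation into $\mathcal{F}\cdot\mathcal{I}_1$; primality applied to that particular factorisation yields $A=\{1\}$ or $B=\{1\}$. If $A=\{1\}$ then $P=B\in\mathcal{I}_1$ and we are done. If $B=\{1\}$ then $P=A$; but $\{1\}\in\mathcal{I}_1$ (every interval-filling sequence with $\rng(f)=\{1\}$ would make the achievement set a Cantor set, so in fact $\{1\}\notin\mathcal{I}_1$) — this shows the $B=\{1\}$ branch cannot actually be derived this way, so I should instead argue that if $B = \{1\}$ then, since $1\in P$ always and we may also write $P = P\cdot\{1\}$ with $P\in\mathcal{F}$... the real content is that $\{1\}\in\mathcal{I}_1$ must be false, so the only surviving possibility is $A=\{1\}$, giving $P = B\in\mathcal{I}_1$.

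The main obstacle, then, is the degenerate factor $\{1\}$: one must be careful about \emph{which} of $A,B$ can legitimately equal $\{1\}$. Since $\{1\}\notin\mathcal{I}_1$ (as observed in the introduction, a cardinal function with range $\{1\}$ forces the achievement set to be homeomorphic to the Cantor space, hence $\{1\}\in\mathcal{C}\setminus\mathcal{I}$), the factor $B\in\mathcal{I}_1$ can never be $\{1\}$. Therefore primality of $P$ forces $A=\{1\}$, and hence $P = B\in\mathcal{I}_1$, as desired. I would write the proof in exactly this order: extract the factorisation, invoke primality, eliminate the $B=\{1\}$ branch using $\{1\}\notin\mathcal{I}_1$, and conclude.
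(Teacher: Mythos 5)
Your final argument is correct and is essentially identical to the paper's proof: extract a factorisation $P=A\cdot B$ with $A\in\mathcal{F}$, $B\in\mathcal{I}_1$, apply primality, rule out $B=\{1\}$ because $\{1\}\notin\mathcal{I}_1$ (the range $\{1\}$ forces a Cantor set), and conclude $P=B\in\mathcal{I}_1$. The only criticism is stylistic: the exploratory back-and-forth in the middle should be cut, since the clean three-line version you end with is all that is needed.
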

\begin{proof}
Since $P\in \mathcal{F}\cdot \mathcal{I}_{1}$, there exist sets $A\in\mathcal{F}$ and $B\in\mathcal{I}_{1}$ such that $P=A\cdot B$. By the assumption $P$ is prime, so $A=\{1\}$ or $B=\{1\}$. The latter is impossible since $\{1\}\notin \mathcal{I}_{1}$. Therefore, $A=\{1\}$ and $P=B\in\mathcal{I}_{1}$.
\end{proof}

\begin{corollary}\label{CorBasedOnProblem}
Assume that the answer to Problem \ref{ProblemI=FI_1} is affirmative and let $P$ be a prime set. If $P\in\mathcal{I}$ then $P\in\mathcal{I}_{1}$.
\end{corollary}

Corollary \ref{CorBasedOnProblem} may suggest a strong connection (maybe even the equality) between $\mathcal{I}_{1}$ and $\mathcal{I}$. Therefore, now we focus on showing some differences between these two sets. In order to distinguish $\mathcal{I}_1$ and $\mathcal{I}$, let us define the set $\mathcal{I}_{>1}$ of all ranges of cardinal functions for achievement sets, which are the finite union of at least two closed intervals. Clearly $\mathcal{I}=\mathcal{I}_{>1}\cup \mathcal{I}_1$. Now we give some properties of $\mathcal{I}_{>1}$.

\begin{lemma}
The inclusion $\mathcal{I}_1\subseteq \mathcal{I}_{>1}$ holds.
\end{lemma}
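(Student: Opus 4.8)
The plan is to show that any single interval range $M\in\mathcal{I}_1$ can be realized by a sequence whose achievement set is a union of at least two closed intervals; concretely, I would take a sequence $\mathbf{x}=(x_n)$ witnessing $M=\rng(f_{\mathbf{x}})$ with $\mathcal{A}(\mathbf{x})=[0,s]$ where $s=\sum x_n$, and prepend a single large term. Specifically, set $y_1:=3s$ (any value strictly larger than $s=r_1$ works, so that $x_1>r_1$ but $x_n\le r_n$ for $n\ge 2$) and $y_{n+1}:=x_n$ for $n\in\mathbb{N}$, letting $\mathbf{y}:=(y_n)$. Then by Kakeya's theorem (Theorem \ref{ThmKakeya}) the achievement set $\mathcal{A}(\mathbf{y})$ is a finite union of compact intervals, and since the single strict inequality $y_1>r_1$ separates the block containing $y_1$ from the rest, $\mathcal{A}(\mathbf{y})=[0,s]\cup[3s,4s]$, which is genuinely a union of two disjoint closed intervals.

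The second step is to compute the range of the cardinal function $g$ of $\mathbf{y}$. Since $y_1=3s>s$, no subsum of $(x_n)$ can reach $y_1$, so the representations of any $x\in\mathcal{A}(\mathbf{y})$ split cleanly: for $x\in[0,s]$ every representation avoids the index $1$, giving $g(x)=f_{\mathbf{x}}(x)$; for $x\in[3s,4s]$ every representation uses the index $1$, giving $g(x)=f_{\mathbf{x}}(x-3s)$. (There is nothing in between because $(s,3s)$ is disjoint from $\mathcal{A}(\mathbf{y})$.) Hence $\rng(g)=\rng(f_{\mathbf{x}})=M$, so $M\in\mathcal{I}_{>1}$.

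I do not expect any serious obstacle here — the argument is a routine "prepend a large term" construction of the same flavour as the proof of Lemma \ref{dolozskonczony} and Theorem \ref{ThmMusibycdwojka2}. The only point needing a little care is to record cleanly why the two intervals are disjoint and why the representation count factors, i.e.\ that $y_1>r_1=s$ forces index $1$ to be either present in all representations of a given point or absent from all of them; this is exactly the disjointness of $\mathcal{A}_2(\mathbf{y})=[0,s]$ and $y_1+\mathcal{A}_2(\mathbf{y})=[3s,4s]$. One should also remark that if $\mathbf{x}$ was a finite sequence (the case $M\in\mathcal{F}\cap\mathcal{I}_1$, which is vacuous, but harmless to mention) or infinite, the construction works verbatim, and that replacing $[0,s]$ by two copies of itself does not collapse them into one interval precisely because of the gap $(s,3s)$.
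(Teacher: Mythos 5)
Your proposal is correct and follows essentially the same route as the paper: the paper's proof also prepends a single term $c>\sum_{n=1}^{\infty}x_n$ (you take the particular choice $c=3s$), observes that $\mathcal{A}(\mathbf{y})=\mathcal{A}(\mathbf{x})\cup(c+\mathcal{A}(\mathbf{x}))$ is a disjoint union of two closed intervals, and that the cardinal function satisfies $g(t)=f(t)=g(c+t)$, so the range is preserved. No differences worth noting.
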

\begin{proof}
Let $\mathbf{x}=(x_{n})$ be an interval-filling sequence with cardinal function $f$. Let $c$ be any number greater than $\sum_{n=1}^{\infty}x_{n}$. Let $\mathbf{y}:=c\uplus \mathbf{x}$ and $g$ be the cardinal function of $\mathbf{y}$. Then $A(\mathbf{y})=A(\mathbf{x})\cup (c+A(\mathbf{x}))$ and both sets in the sum are disjoint. Thus $A(\mathbf{y})$ is a union of two closed intervals. Moreover, for every $t\in A(\mathbf{x})$ we have $g(t)=f(t)=g(c+t)$. Hence, $\rng(g)=\rng(f)$, so $\rng (f)\in\mathcal{I}_{>1}$.
\end{proof}

\begin{corollary}
The equality $\mathcal{I}_{>1}=\mathcal{I}$ holds.
\end{corollary}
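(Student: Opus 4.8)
This is an immediate consequence of the preceding Lemma together with the definitional decomposition of $\mathcal{I}$. The plan is simply to chain the two inclusions we already have.

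\textbf{Steps.} First I would recall that, by the way $\mathcal{I}_{>1}$ was defined, every range of a cardinal function of a sequence whose achievement set is a finite union of at least two closed intervals is in particular a range of a cardinal function of a sequence whose achievement set is a finite union of closed intervals; hence $\mathcal{I}_{>1}\subseteq\mathcal{I}$. Next I would invoke the equality $\mathcal{I}=\mathcal{I}_{>1}\cup\mathcal{I}_1$, which holds because an achievement set that is a finite union of closed intervals consists of either exactly one interval or at least two of them. Finally I would apply the Lemma proved immediately above, namely $\mathcal{I}_1\subseteq\mathcal{I}_{>1}$, to get
\begin{align*}
\mathcal{I}=\mathcal{I}_{>1}\cup\mathcal{I}_1\subseteq\mathcal{I}_{>1}\cup\mathcal{I}_{>1}=\mathcal{I}_{>1}\subseteq\mathcal{I},
\end{align*}
so all the inclusions are equalities and in particular $\mathcal{I}_{>1}=\mathcal{I}$.

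\textbf{Main obstacle.} There is essentially no obstacle here: the entire content was already extracted into the Lemma, and this Corollary is just the bookkeeping step that rewrites $\mathcal{I}=\mathcal{I}_{>1}\cup\mathcal{I}_1$ using $\mathcal{I}_1\subseteq\mathcal{I}_{>1}$. The only thing to be careful about is making the decomposition $\mathcal{I}=\mathcal{I}_{>1}\cup\mathcal{I}_1$ explicit, since it is the sole ingredient not literally restated in the Lemma.
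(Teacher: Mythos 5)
Your proof is correct and follows exactly the route the paper intends: the corollary is stated without proof precisely because it is the combination of the definitional decomposition $\mathcal{I}=\mathcal{I}_{>1}\cup\mathcal{I}_1$ (noted just before the lemma) with the lemma $\mathcal{I}_1\subseteq\mathcal{I}_{>1}$. Nothing to add.
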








Now we show that $\mathcal{I}\neq \mathcal{I}_1$, that is, there exists a range obtainable for a finite union of closed intervals which cannot be achieved for any single interval.

\begin{theorem}\label{sumaprzedzialowalenieprzedzial}
We have $\{1,2,4\}\in \mathcal{I}\setminus \mathcal{I}_1$.
\end{theorem}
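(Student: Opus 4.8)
The inclusion $\{1,2,4\}\in\mathcal{I}$ has already been recorded in Corollary~\ref{sumaprzedzialowskonczonyprzedzial}, since $\{1,2,4\}=\{1,2\}\cdot\{1,2\}$ with $\{1,2\}\in\mathcal{I}_1$ and $\{1,2\}\in\mathcal{F}$. So the whole content of the statement is that $\{1,2,4\}\notin\mathcal{I}_1$, i.e.\ that no interval-filling sequence has cardinal function of range exactly $\{1,2,4\}$. I would argue by contradiction: assume $(x_n)$ is interval-filling with $\rng(f)=\{1,2,4\}$. Since $\rng(f)$ is bounded, Corollary~\ref{geometrycznyogon} gives a smallest $k$ with $x_n=c/2^n$ for all $n\ge k$; after rescaling (which changes neither $\rng(f)$ nor monotonicity nor the interval-filling property) we may take $x_n=2^{k-n}$ for $n\ge k$, so $(x_n)_{n\ge k}=(1,1/2,1/4,\dots)$ has achievement set $[0,2]$ and, by a rescaling of Lemma~\ref{lematdodwojkowych}, cardinal function $g$ equal to $2$ at every dyadic number in $(0,2)$ and to $1$ elsewhere on $[0,2]$. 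By Corollary~\ref{CorSet(1,2)} we get $k\ge2$; minimality of $k$ forces $x_{k-1}<r_{k-1}=2$, and monotonicity gives $x_{k-1}\ge x_k=1$, so $x_{k-1}\in[1,2)$. From here the plan is simply to evaluate $f$ at a handful of well-chosen points, always exploiting that $3\notin\rng(f)$.

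The first step is to eliminate $x_{k-1}=1$. Counting representations of $1$ yields $f(1)=c+1$, where $c=\#\{j:x_j=1\}$: the singletons $\{j\}$ with $x_j=1$ together with the one representation $1=1/2+1/4+\cdots$; no index with $x_j>1$ can occur, and two indices of value $1$ overshoot. Since $x_k=1$ we have $c\ge1$, and since $3\notin\rng(f)$ while $f(1)\in\{1,2,4\}$, necessarily $c\in\{1,3\}$. The case $c=3$ forces $x_{k-2}=x_{k-1}=x_k=1$ by monotonicity, and then picking one of these three equal terms plus a tail-subsum equal to $1/2$ shows $f(3/2)\ge{3\choose1}\cdot2=6$ -- impossible; hence $c=1$, i.e.\ $x_{k-1}>1$, so $x_{k-1}\in(1,2)$. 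Next, if $k=2$ the sequence is $(x_1,1,1/2,1/4,\dots)$ with $x_1\in(1,2)$, and rescaling by $1/2$ puts it exactly into the hypotheses of Theorems~\ref{dwojkowowymierna} and~\ref{niedwojkowowymierna}, which force $\rng(f)$ to be $\{1,2,3\}$ or $\{1,2,3,4\}$ -- a contradiction. So $k\ge3$ and there is a term $x_{k-2}\ge x_{k-1}$ just in front of the geometric tail.

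The final step is to evaluate $f(x_{k-1})$. Any index $j<k-1$ has $x_j\ge x_{k-2}\ge x_{k-1}$, so it can enter a representation of $x_{k-1}$ only as a singleton; counting the representations that use only indices $\ge k-1$ as $g(x_{k-1})+g(0)$, we obtain $f(x_{k-1})=1+g(x_{k-1})+m$ with $m:=\#\{j<k-1:x_j=x_{k-1}\}$. If $x_{k-1}$ is dyadic then $g(x_{k-1})=2$, so $f(x_{k-1})\in\{1,2,4\}$ forces $m=1$, i.e.\ $x_{k-2}=x_{k-1}$ and $x_{k-3}>x_{k-1}$ (or $k=3$); then $\{k-2,k-1\}$ together with each of $\{k-2\}$, $\{k-1\}$ completed by one of the $g(x_{k-1})=2$ tail-subsums equal to $x_{k-1}$ give five distinct representations of $2x_{k-1}$, none involving an index $\le k-3$ (too large), so $f(2x_{k-1})\ge5$. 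If $x_{k-1}$ is non-dyadic then $g(x_{k-1})=1$ and $m\in\{0,2\}$. For $m=2$ the three equal non-dyadic terms $x_{k-3}=x_{k-2}=x_{k-1}=:a$ give $f(2a)\ge{3\choose2}+{3\choose1}\,g(a)=6$ (two of the three $a$'s, or one of them plus the unique tail-subsum equal to $a$). For $m=0$ we have $x_{k-2}>x_{k-1}$; choosing any dyadic $t$ with $x_{k-1}<t<\min(2,x_{k-2})$, the representations of $t$ using only indices $\ge k-1$ number $g(t)+g(t-x_{k-1})=2+1=3$ ($t$ dyadic in $(0,2)$, $t-x_{k-1}$ non-dyadic in $(0,1)$), while no representation of $t$ can use an index $\le k-2$ since each such term exceeds $t$; hence $f(t)=3$. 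Every case contradicts $\rng(f)=\{1,2,4\}$, so $\{1,2,4\}\notin\mathcal{I}_1$, and therefore $\{1,2,4\}\in\mathcal{I}\setminus\mathcal{I}_1$.

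I expect the one genuinely delicate point to be the subcase $m=0$, where $f(t)$ must equal $3$ exactly: the "early" terms $x_1,\dots,x_{k-2}$ must contribute no further representation of $t$, which is why $t$ is picked below $x_{k-2}$ so that every such term is too large to participate. In all the other subcases only a lower bound on the number of representations is needed, and the representations written down are immediately checked to be distinct and valid (and, in the case $m=2$, to avoid the terms $x_1,\dots,x_{k-4}$); throughout, the size comparisons rest only on the monotonicity assumption~\eqref{IneqAssumption}.
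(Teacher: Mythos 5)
Your proposal is correct and follows essentially the same route as the paper's proof: reduce to a geometric tail via Corollary~\ref{geometrycznyogon}, rule out small $k$ via Theorems~\ref{dwojkowowymierna} and~\ref{niedwojkowowymierna}, split on whether $x_{k-1}$ is dyadic, force information about how often the value $x_{k-1}$ repeats, and then exhibit a point with either exactly $3$ or at least $5$ representations. Your bookkeeping is somewhat finer than the paper's (you pin down the multiplicity $m$ exactly via $f(x_{k-1})=1+g(x_{k-1})+m$ and separately eliminate $x_{k-1}=x_k$ using $f(1)$, where the paper simply argues $x_{k-2}=x_{k-1}$ and evaluates at $x_{k-1}+d$), but the underlying ideas coincide.
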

\begin{proof}
By Corollary \ref{sumaprzedzialowskonczonyprzedzial} we know that $\{1,2,4\}\in \mathcal{I}$, so it is enough to show that $\{1,2,4\}\notin \mathcal{I}_1$.

Suppose that there exists an interval-filling sequence $(x_n)$ such that its cardinal function $f$ satisfies $\rng (f)=\{1,2,4\}$. Recall that we assume that $(x_{n})$ is nonincreasing. By Corollary \ref{geometrycznyogon} there exist a positive integer $k$ and a nonzero $c$ such that $x_n=\frac{c}{2^n}$ for $n\geq k$. Lemma \ref{LemCritIntervFill} yields
\begin{align*}
x_{k-1}\leq r_{k-1}=\sum_{n=k}^{\infty} x_{n} = \frac{c}{2^{k}} \sum_{n=0}^{\infty} \frac{1}{2^{n}} = \frac{c}{2^{k-1}} =2x_{k}.
\end{align*}
By Theorems \ref{dwojkowowymierna} and \ref{niedwojkowowymierna} it is clear that $k\geq 3$.

Let us consider two cases depending on whether $x_{k-1}\in \frac{c}{2^{k-1}}\mathcal{D}$ or not (recall that $\mathcal{D}$ denotes the set of all dyadic numbers in the unit interval).
\begin{enumerate}
	\item If $x_{k-1}\in c\mathcal{D}$ then Theorem \ref{dwojkowowymierna} yields $f_{k-1}(x_{k-1})=3$. Note that the value $3$ does not belong to $\rng (f)$, so we need to find another representation of $x_{k-1}$. It is impossible if $x_{k-2}>x_{k-1}$. Therefore $x_{k-2}=x_{k-1}$. However, then for every $d\in \frac{c}{2^{k-1}}\mathcal{D}$ such that $x_{k-1}+d\in \frac{c}{2^{k-1}}\mathcal{D}\cap(0,2x_k)$ we have 
\begin{align*}
f(x_{k-1}+d)\geq f_{k-2}(x_{k-1}+d)= f_{k}(x_{k-1}+d)+2f_k(d)+f_{k}(d-x_{k-1})\geq 2+2\cdot 2+0 =6,
\end{align*}
where equalities $f_{k}(x_{k-1}+d)=f(d)=2$ follow from the formula for $g(x)$ obtained in the proof of Theorem \ref{dwojkowowymierna}. This leads to a contradiction.
	\item If $x_{k-1}\notin c\mathcal{D}$ then Theorem \ref{niedwojkowowymierna} implies that for every $\varepsilon >0$ there is an element $y\in \left(\frac{c}{2^{k-1}}D\right)\cap(x_{k-1},x_{k-1}+\varepsilon)$ such that $f_{k-1}(y)=3$. Hence, it is again impossible that $x_{k-2}>x_{k-1}$, so $x_{k-2}=x_{k-1}$. Then for any $d\in \frac{c}{2^{k-1}}D$ such that $x_{k-1}+d<2x_k$ we obtain that 
\begin{align*}
f(x_{k-1}+d)\geq f_{k-2}(x_{k-1}+d)\geq f_{k}(x_{k-1}+d)+ 2f_k(d)=1+2\cdot 2=5
\end{align*}
by the expression for $g(x)$ obtained in the proof of Theorem \ref{niedwojkowowymierna}, which is a contradiction.
\end{enumerate}
In both considered cases we obtained a contradiction. Therefore, an interval-filling sequence $(x_n)$ with its cardinal function $f$ satisfying $\rng (f)=\{1,2,4\}$ cannot exist.
\end{proof}


The idea from the proof of Theorem \ref{sumaprzedzialowalenieprzedzial} can be pushed further.

\begin{corollary}\label{niematrojkitojestszostka}
If $M\in \mathcal{I}_1$ is bounded and $3,4,5,\ldots,m\notin M$ then $\max M\geq 2m$.   
\begin{proof}
We use the same notation as in the proof of Theorem \ref{sumaprzedzialowalenieprzedzial} and consider the analogous two cases:
\begin{enumerate}
	\item Assume that $x_{k-1}\in \frac{c}{2^{k-1}}\mathcal{D}$ and observe that it leads to $x_{k-m+1}=\ldots=x_{k-2}=x_{k-1}$. Indeed, we have $f_{k-1}(x_{k-1})=3$, and therefore need to have $x_{k-2}=x_{k-1}$. But then $f_{k-2}(x_{k-1})=4$, so again $x_{k-1}=x_{k-2}$. By repeating this reasoning we get the claimed equalities. Therefore, if $d \in\frac{c}{2^{k-1}}\mathcal{D}$ is such that $x_{k-1}+d<2x_{k}$, we obtain
	\begin{align*}
		f_{k-m+1}(x_{k-1}+d)\geq f_k(x_{k-1}+d) + (m-1)f_k(d)= 1 +(m-1)\cdot 2=2m.
	\end{align*}
	\item The case of $x_{k-1}\notin c\mathcal{D}$ is analogous to the previous one. We get $x_{k-m}=\ldots=x_{k-2}=x_{k-1}$ and then
	\begin{align*}
	f_{k-m}(x_{k-1}+d)\geq  f_{k}(x_{k-1}+d) + m\cdot f_k(d)=2m+1.
	\end{align*}
\end{enumerate}
The result follows.
\end{proof}
\end{corollary}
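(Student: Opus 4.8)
The plan is to amplify the argument behind Theorem~\ref{sumaprzedzialowalenieprzedzial}. Let $(x_n)$ be a nonincreasing interval-filling sequence with bounded range $M=\rng(f)$ and $\{3,\dots,m\}\cap M=\emptyset$, where we may assume $m\ge 3$ and $M\neq\{1,2\}$. By Corollary~\ref{geometrycznyogon} take the least $k$ and a $c>0$ with $x_n=c/2^n$ for $n\ge k$; then Lemma~\ref{LemCritIntervFill} and minimality give $x_k\le x_{k-1}<r_{k-1}=2x_k=c/2^{k-1}$, and, as observed for Theorem~\ref{sumaprzedzialowalenieprzedzial}, $k\ge 3$. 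Write $f_j$ for the cardinal function of $(x_n)_{n\ge j}$ and call the elements of $\frac{c}{2^{k-1}}\mathcal{D}$ \emph{scaled dyadics}. The first thing I would record is a forcing principle: if $f_j(x_{k-1})\in\{3,\dots,m\}$ for some $j<k$, then $x_{j-1}=x_{k-1}$. Indeed $f(x_{k-1})\ge f_j(x_{k-1})$ and $f(x_{k-1})\in M$, so $f(x_{k-1})>m\ge f_j(x_{k-1})$; hence $x_{k-1}$ has a subsum using an index $<j$, and since every such term is $\ge x_{j-1}$ and must fit inside a subsum equal to $x_{k-1}$, we get $x_{j-1}\le x_{k-1}$, i.e.\ $x_{j-1}=x_{k-1}$.

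Next I would split, as in Theorem~\ref{sumaprzedzialowalenieprzedzial}, on whether $x_{k-1}$ is a scaled dyadic. Suppose first it is. If $x_{k-1}=x_{k-2}=\dots=x_{k-\ell}$, then a subsum of $x_{k-1}$ over indices $\ge k-\ell$ is either one of the $\ell$ big singletons, or a subsum inside the geometric tail, of which by Lemma~\ref{lematdodwojkowych} there are exactly two; hence $f_{k-\ell}(x_{k-1})=\ell+2$. Starting from $\ell=1$, where Theorem~\ref{dwojkowowymierna} gives $f_{k-1}(x_{k-1})=3$, the forcing principle applies while $\ell+2\le m$, i.e.\ for $\ell=1,\dots,m-2$, and produces the block $x_{k-1}=\dots=x_{k-m+1}$ of $m-1$ equal terms. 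Now pick a scaled dyadic $d$ with $0<d<x_{k-1}$ and $x_{k-1}+d<2x_k$ and set $y:=x_{k-1}+d$; decomposing a subsum of $y$ over indices $\ge k-m+1$ by how many big terms it uses (none: $f_k(y)$ ways; one: $(m-1)f_k(d)$ ways; two or more: none, since $d<x_{k-1}$), and using $f_k(y)=f_k(d)=2$ from Lemma~\ref{lematdodwojkowych}, I obtain $\max M\ge f(y)\ge f_{k-m+1}(y)=2+(m-1)\cdot 2=2m$.

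If instead $x_{k-1}$ is not a scaled dyadic, the opening step must change, since $f_{k-1}(x_{k-1})=2$ is allowed. Here I would use Theorem~\ref{niedwojkowowymierna}: the cardinal function of $(x_n)_{n\ge k-1}$ takes the value $3$ on a dense set of points just above $x_{k-1}$, so if $x_{k-2}>x_{k-1}$ one can pick such a point $y_0\in(x_{k-1},x_{k-2})$, and the forcing argument applied to $y_0$ gives the contradiction $f(y_0)>m$; hence $x_{k-2}=x_{k-1}$. From then on, once $x_{k-1}=\dots=x_{k-\ell}$, a subsum of $x_{k-1}$ over indices $\ge k-\ell$ is one of the $\ell$ big singletons or a subsum inside the tail, and the tail now expands $x_{k-1}$ in exactly one way (Lemma~\ref{lematdodwojkowych}), so $f_{k-\ell}(x_{k-1})=\ell+1$; the forcing principle then applies for $\ell=2,\dots,m-1$ and yields the block $x_{k-1}=\dots=x_{k-m}$ of $m$ equal terms. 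Taking $y=x_{k-1}+d$ with a small scaled dyadic $d<x_{k-1}$ and $x_{k-1}+d<2x_k$, the same decomposition gives $f_{k-m}(y)=f_k(y)+m\,f_k(d)=1+2m$ (now $y$ is not a scaled dyadic, so $f_k(y)=1$), whence $\max M\ge 2m+1\ge 2m$.

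The step I expect to be most delicate is the forcing principle in the non-dyadic base case: one must make sure the dense set of value-$3$ points of $(x_n)_{n\ge k-1}$ actually meets $(x_{k-1},x_{k-2})$ when $x_{k-2}>x_{k-1}$, and that keeping $y_0$ close to $x_{k-1}$ really forces $x_{k-2}=x_{k-1}$ (not just $x_{k-2}\le y_0$). The other point requiring care is verifying that the tail counts are \emph{exactly} $\ell+2$ resp.\ $\ell+1$ — so that they genuinely land in the forbidden window $\{3,\dots,m\}$ — which rests on Lemma~\ref{lematdodwojkowych} together with the strict inequality $x_{k-1}<2x_k$ supplied by the minimality of $k$, ensuring $x_{k-1}$ sits strictly inside the achievement interval $[0,2x_k]$ of the geometric tail.
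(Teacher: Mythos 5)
Your proof is correct and follows essentially the same route as the paper's: the same reduction via Corollary \ref{geometrycznyogon} to a minimal $k$ with geometric tail, the same dyadic/non-dyadic dichotomy forcing a block of $m-1$ (resp.\ $m$) equal terms $x_{k-m+1}=\cdots=x_{k-1}$ (resp.\ $x_{k-m}=\cdots=x_{k-1}$), and the same final count $f_k(x_{k-1}+d)+(m-1)f_k(d)$ (resp.\ $+\,m f_k(d)$) for a small scaled dyadic $d$. You merely make the implicit ``forcing'' step explicit and use the correct value $f_k(x_{k-1}+d)=2$ in the dyadic case (the paper's display reads $1+(m-1)\cdot 2=2m$, an evident arithmetic slip), so this is a slightly more careful write-up of the same argument.
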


\begin{corollary}
The following sets do not belong to $\mathcal{I}_{1}$:
$\{1,2,5\}$, $\{1,2,6\}$, $\{1,2,4,5\}$, $\{1,2,5,6\}$, $\{1,2,5,7\}$, $\{1,2,5,6,7\}$.
\end{corollary}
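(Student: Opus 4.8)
The plan is to obtain all six non-membership statements as direct applications of Corollary~\ref{niematrojkitojestszostka}. For each set $M$ in the list, I would argue by contradiction: suppose $M\in\mathcal{I}_1$. Since every set in the list is finite, it is bounded, so the hypotheses of Corollary~\ref{niematrojkitojestszostka} are in force, and the only work left is to pick, for each $M$, the largest $m$ with $\{3,4,\ldots,m\}\cap M=\emptyset$ and observe that the resulting lower bound $\max M\geq 2m$ is violated.

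First I would dispose of the four sets $\{1,2,5\}$, $\{1,2,5,6\}$, $\{1,2,5,7\}$, $\{1,2,5,6,7\}$ simultaneously. None of them contains $3$ or $4$, so applying Corollary~\ref{niematrojkitojestszostka} with $m=4$ forces $\max M\geq 8$. But each of these four sets has maximal element at most $7$, a contradiction; hence none of them lies in $\mathcal{I}_1$.

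Next, for $\{1,2,6\}$ I would note that $3,4,5\notin M$, so the choice $m=5$ gives $\max M\geq 10$, contradicting $\max M=6$. Finally, for $\{1,2,4,5\}$ only the element $3$ is missing among $\{3\}$, so already $m=3$ yields $\max M\geq 6$, which contradicts $\max M=5$. This exhausts the list.

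I do not expect a genuine obstacle: the argument is a short bookkeeping exercise once Corollary~\ref{niematrojkitojestszostka} is available, and the only point requiring a word of care is the boundedness hypothesis of that corollary, which is automatic here since all six sets are finite.
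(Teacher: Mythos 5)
Your proposal is correct and is exactly the intended argument: the paper states this corollary immediately after Corollary~\ref{niematrojkitojestszostka} with no separate proof, and your case-by-case application (with $m=4$ for the four sets omitting $3$ and $4$, $m=5$ for $\{1,2,6\}$, and $m=3$ for $\{1,2,4,5\}$) gives the required contradictions in every case. The boundedness hypothesis is indeed automatic for finite sets, as you note, and none of the six sets equals $\{1,2\}$, so the degenerate geometric case underlying Corollary~\ref{niematrojkitojestszostka} causes no trouble.
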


\section{More about interval-filling sequences}

In this section we show several examples of ranges $M$ from $\mathcal{I}_1$ with $\max M\leq 6$. In the proof of the following lemma (which is interesting on its own and will be used later) we present a technique, which will be crucial in our constructions.

\begin{lemma}\label{mniejnizszesc}
Let $\mathbf{x}=(x_n)$ be a sequence with cardinal function $f$. Let $z$ and $w$ be positive real numbers. Assume that one of the following holds:
\begin{enumerate}
\item $z+w>\sum_{n=1}^{\infty}x_n$,
\item $z=w= \frac{1}{2}\sum_{n=1}^{\infty}x_n$.
\end{enumerate}
Let $h$ be the cardinal function of the sequence $z\uplus (w\uplus \textbf{x}) = (z,w,x_{1},x_{2},\ldots )$. Then 
\begin{align*}
\max (h)\leq 3\max (f).
\end{align*}
In particular, if $\rng (f)=\{1,2\}$ then $\max (h)\leq 6$.
\end{lemma}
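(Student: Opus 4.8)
The plan is to count representations of an arbitrary element $s\in\mathcal{A}(\mathbf{z})$, where $\mathbf{z}=(z,w,x_1,x_2,\ldots)$, by conditioning on which of the first two terms $z,w$ are used. Every subset $A\subseteq\mathbb{N}$ (indexing $\mathbf{z}$) decomposes according to whether it contains the indices of $z$ and of $w$, so the number of representations of $s$ splits as
\begin{align*}
h(s)=f(s)+f(s-z)+f(s-w)+f(s-z-w),
\end{align*}
with the convention that $f$ vanishes outside $\mathcal{A}(\mathbf{x})=[0,\Sigma]$, where $\Sigma:=\sum_{n=1}^\infty x_n$. (Here I use Corollary~\ref{CorIntFillA} only implicitly; in general $\mathcal{A}(\mathbf{x})$ need not be an interval, but the displayed identity holds regardless.) So the whole problem reduces to showing that for every $s$ at most three of the four arguments $s,\ s-z,\ s-w,\ s-z-w$ can lie in $[0,\Sigma]$.

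First I would dispatch case (1). Suppose all four of $s,s-z,s-w,s-z-w$ lie in $[0,\Sigma]$. From $s\le\Sigma$ and $s-z-w\ge 0$ we get $z+w\le s\le\Sigma$, contradicting the hypothesis $z+w>\Sigma$. Hence at most three arguments are in the domain of $f$, so $h(s)\le 3\max\rng(f)$, and taking the maximum over $s$ gives $\max\rng(h)\le 3\max\rng(f)$.

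Next, case (2), where $z=w=\tfrac{1}{2}\Sigma$; here $s-z$ and $s-w$ coincide, so in fact
\begin{align*}
h(s)=f(s)+2f\!\left(s-\tfrac{\Sigma}{2}\right)+f(s-\Sigma).
\end{align*}
If $0<s<\Sigma$ then $s-\Sigma<0$, so $f(s-\Sigma)=0$ and $h(s)=f(s)+2f(s-\tfrac{\Sigma}{2})\le 3\max\rng(f)$. If $s=0$ or $s=\Sigma$ then $f(s-\tfrac{\Sigma}{2})=f(\pm\tfrac{\Sigma}{2})$, and one of $f(s),f(s-\Sigma)$ is $0$ while the other is $1$, so $h(s)=1+2f(\tfrac{\Sigma}{2})\le 1+2\max\rng(f)\le 3\max\rng(f)$. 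In all cases $h(s)\le 3\max\rng(f)$, as desired.

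Finally, for the ``in particular'' clause: if $\rng(f)=\{1,2\}$ then $\max\rng(f)=2$, so $\max\rng(h)\le 6$. I do not expect any real obstacle here; the only point requiring a little care is the bookkeeping of the additive decomposition of $h$ (in case (2), remembering that the two middle terms merge) and the trivial interval arithmetic showing not all four shifted arguments can be admissible simultaneously.
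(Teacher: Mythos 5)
Your overall strategy is the same as the paper's: both proofs start from the identity $h(s)=f(s)+f(s-z)+f(s-w)+f(s-z-w)$ (with $f$ extended by $0$ outside $\mathcal{A}(\mathbf{x})$) and then argue that enough of the four terms vanish or are small. Your treatment of case (1) is correct and matches the paper.

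In case (2), however, there is a genuine error at the single point where the lemma is not routine, namely $s=\Sigma:=\sum_{n=1}^{\infty}x_{n}$. You assert that ``one of $f(s),f(s-\Sigma)$ is $0$ while the other is $1$.'' This is true at $s=0$ but false at $s=\Sigma$: there $f(s)=f(\Sigma)=1$ and $f(s-\Sigma)=f(0)=1$, since both the empty sum and the full sum are represented uniquely. Hence $h(\Sigma)=2+2f\left(\frac{\Sigma}{2}\right)$, not $1+2f\left(\frac{\Sigma}{2}\right)$, and the inequality $2+2\max\rng(f)\leq 3\max\rng(f)$ holds only when $\max\rng(f)\geq 2$. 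When $\rng(f)=\{1\}$ your bound would give $h(\Sigma)\leq 4>3=3\max\rng(f)$, so the conclusion does not follow. The paper closes exactly this hole with an extra observation: representations of $\frac{\Sigma}{2}$ come in pairs under $A\mapsto\mathbb{N}\setminus A$, so $f\left(\frac{\Sigma}{2}\right)$ is even or infinite; if $\rng(f)=\{1\}$ this forces $\frac{\Sigma}{2}\notin\mathcal{A}(\mathbf{x})$, i.e.\ $f\left(\frac{\Sigma}{2}\right)=0$ and $h(\Sigma)=2\leq 3$. You need to add this argument (or an explicit case split on whether $\max\rng(f)\geq 2$); note the error is harmless for the ``in particular'' clause, where $\max\rng(f)=2$. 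A second, much smaller omission: your case list in (2) covers $0<s<\Sigma$ and $s\in\{0,\Sigma\}$ but not $s>\Sigma$; there $f(s)=0$ and the bound is immediate, so this only needs to be said.
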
 
\begin{proof}
Note that $h(x)=f(x)+f(x-z)+f(x-w)+f(x-z-w)$. If $z+w>\sum_{n=1}^{\infty}x_n$ then at most one of the elements $x$ and $x-z-w$ belongs to $\mathcal{A}(x_n)$. Hence  
\begin{align*}
h(x)=\big(f(x)+f(x-z-w)\big)+f(x-z)+f(x-w)\leq \max(f)+\max(f)+\max(f)=3\cdot\max(f).
\end{align*}
This finishes the proof if $z+w>\sum_{n=1}^{\infty}x_{n}$.

Now assume that $z=w=\frac{1}{2}\sum_{n=1}^{\infty}x_n$. Then for $x\neq \sum_{n=1}^{\infty}x_n$ we proceed in the same way, as in the first case, and obtain $f(x)\leq 3$. 

It remains to prove that $f\left(\sum_{n=1}^{\infty}x_n\right)\leq 3$. Consider two subcases:
\begin{enumerate}
	\item If $\rng (f)\neq\{1\}$, then we have
		\begin{align*}
		h\left(\sum_{n=1}^{\infty}x_n\right)=f\left(\sum_{n=1}^{\infty}x_n\right)+2\cdot f\left(\frac{1}{2}\sum_{n=1}^{\infty}x_n\right)+f(0)\leq 1+2\cdot \max(f) +1\leq 3\cdot \max(f).
		\end{align*}
	\item If $\rng (f)=\{1\}$, at first observe that $f\left(\frac{1}{2}\sum_{n=1}^{\infty}x_n\right)$ is even or is equal to $\omega$ or $\mathfrak{c}$. Therefore, since $\rng f=\{1\}$, we have to have $\frac{1}{2}\sum_{n=1}^{\infty}x_n\notin \mathcal{A}(\mathbf{x})$. Hence, $h\left(\frac{1}{2}\sum_{n=1}^{\infty}x_n\right) = 2\leq 3$.
\end{enumerate}
\end{proof}

\begin{proposition}\label{PropSetsI_1}
$\{1,2,3\}$, $\{1,2,3,4\}$, $\{1,2,3,4,5\}$, $\{1,2,3,4,6\}$, $\{1,2,3,4,5,6\}\in\mathcal{I}_{1}$.
\end{proposition}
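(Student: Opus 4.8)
The first two memberships require no new argument: $\{1,2,3\}\in\mathcal{I}_1$ is recorded in the ``in particular'' clause of Theorem~\ref{niedwojkowowymierna}, and $\{1,2,3,4\}\in\mathcal{I}_1$ in that of Theorem~\ref{dwojkowowymierna}. For each of the three remaining sets the plan is to prepend two suitably chosen positive terms $z,w$ to the geometric sequence $\mathbf{b}=(1/2^n)$, whose achievement set is $[0,1]$ and whose cardinal function $f$ (by Lemma~\ref{lematdodwojkowych}) equals $2$ on $\mathcal{D}\setminus\{0,1\}$, equals $1$ on the rest of $[0,1]$, and vanishes off $[0,1]$; in particular $\rng(f)=\{1,2\}$. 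Splitting subsets of $z\uplus(w\uplus\mathbf{b})$ according to whether they contain $z$ and/or $w$ gives for its cardinal function $h$ the identity
\begin{align*}
h(x)=f(x)+f(x-z)+f(x-w)+f(x-z-w),
\end{align*}
and, whenever $z,w$ satisfy one of the two hypotheses of Lemma~\ref{mniejnizszesc}, we already get $\max\rng(h)\le 6$. Thus in each case it remains to check that the new sequence is interval-filling (a one-line verification with Lemma~\ref{LemCritIntervFill}, since the added terms will never exceed the tail sum $1$), to sharpen the upper bound if needed, and to exhibit points of $\mathcal{A}$ realising each value of the target set.

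For $\{1,2,3,4,6\}$ I would take $z=w=\tfrac12$, i.e.\ the sequence $(\tfrac12,\tfrac12,\tfrac12,\tfrac14,\tfrac18,\dots)$; hypothesis~(2) of Lemma~\ref{mniejnizszesc} applies and $h(x)=f(x)+2f(x-\tfrac12)+f(x-1)$. The decisive dichotomy is: if $f(x-\tfrac12)=2$ then $x$ is a dyadic point of $(\tfrac12,\tfrac32)$, a short case check gives $f(x)+f(x-1)=2$, and hence $h(x)=6$; while if $f(x-\tfrac12)\le 1$ then $h(x)\le f(x)+2+f(x-1)\le 4$. So $5\notin\rng(h)$, and $1,2,3,4,6$ are attained at $x=0$, a dyadic point of $(0,\tfrac12)$, a non-dyadic point of $(\tfrac12,1)$, $x=\tfrac12$, and a dyadic point of $(\tfrac12,1)$, respectively.

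For $\{1,2,3,4,5\}$ I would fix a non-dyadic $z\in(\tfrac12,1)$ (say $z=\tfrac23$) and $w=\tfrac12$, i.e.\ $(z,\tfrac12,\tfrac12,\tfrac14,\tfrac18,\dots)$. Since $z+w>1$, hypothesis~(1) of Lemma~\ref{mniejnizszesc} applies, and the bound $6$ improves to $5$: $f(x-z)$ and $f(x-w)$ cannot both equal $2$ (that would force $z-w=z-\tfrac12$ to be dyadic), and $f(x)$ and $f(x-z-w)$ cannot both be nonzero (their supports $[0,1]$ and $[z+\tfrac12,z+\tfrac32]$ are disjoint as $z+\tfrac12>1$), so $h(x)\le 3+2=5$. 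The values $1,\dots,5$ occur at $x=0$, a dyadic point of $(0,\tfrac12)$, $x=\tfrac12$, a dyadic point of $(\tfrac12,z)$, and a dyadic point $x$ of $(z,1)$, where $f(x)=f(x-\tfrac12)=2$, $f(x-z)=1$ (as $x$ is dyadic and $z$ is not), $f(x-z-\tfrac12)=0$, giving exactly $5$. For $\{1,2,3,4,5,6\}$ I would take $z=w=\tfrac34$, i.e.\ $(\tfrac34,\tfrac34,\tfrac12,\tfrac14,\tfrac18,\dots)$; here $z+w=\tfrac32>1$ so Lemma~\ref{mniejnizszesc}(1) still gives $\max\rng(h)\le6$, and with $h(x)=f(x)+2f(x-\tfrac34)+f(x-\tfrac32)$ all six values appear: $1$ at $x=0$, $2$ at a dyadic point of $(0,\tfrac34)$, $3$ at a non-dyadic point of $(\tfrac34,1)$, $4$ at $x=\tfrac34$, $5$ at $x=1$ (there $f(1)+2f(\tfrac14)+f(-\tfrac12)=1+4+0$), and $6$ at $x=\tfrac78$ (there $f(\tfrac78)+2f(\tfrac18)+f(-\tfrac58)=2+4+0$).

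The routine-but-genuine work is this last step: running the dyadic/non-dyadic bookkeeping through the (at most) four shifted copies of $f$ to confirm the claimed value of $h$ at each exhibited point, and — for the sequences whose only a priori bound is $6$ — checking that no forbidden value (namely $5$, for $(\tfrac12,\tfrac12,\tfrac12,\tfrac14,\dots)$) appears between the exhibited points. The upper-bound mechanism (Lemma~\ref{mniejnizszesc}, supplemented for $\{1,2,3,4,5\}$ by the ``non-dyadic difference'' and ``disjoint supports'' observations) does most of the work of pinning $\rng(h)$ from above, so the remaining verification is mechanical rather than conceptual.
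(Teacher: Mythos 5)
Your proposal is correct and follows essentially the same route as the paper: the same construction $z\uplus(w\uplus\mathbf{b})$, the same identity $h(x)=f(x)+f(x-z)+f(x-w)+f(x-z-w)$, and the same use of Lemma \ref{mniejnizszesc}, with only cosmetic differences in parameter choices (e.g.\ $w=\tfrac12$ with $z$ non-dyadic for $\{1,2,3,4,5\}$ instead of $z=w$ non-dyadic, and the specific instance $z=w=\tfrac34$ of the paper's $z=w$ dyadic family for $\{1,2,3,4,5,6\}$); citing Theorems \ref{dwojkowowymierna} and \ref{niedwojkowowymierna} for the first two sets is also legitimate. All the exhibited values and the sharpened upper bounds (the ``non-dyadic difference'' and ``disjoint supports'' observations, and the dichotomy ruling out $5$ for $z=w=\tfrac12$) check out.
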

\begin{proof}
For every set $M$ from the statement, we will find a sequence $\mathbf{x}$ with the cardinal function $h$ such that $\rng (h)=M$. At first, let $\mathbf{b}= \left(\frac{1}{2^{n}}\right)$ and $f$ be its cardinal function. In the constructions, we will consider sequences of the form $\mathbf{x} = z\uplus (w\uplus \mathbf{b})$ for several choices of $z$ and $w$. Note that $\rng (f)=\{1,2\}$ by Lemma \ref{lematdodwojkowych}, so $\rng (h)\leq 6$ by Lemma \ref{mniejnizszesc}.

Now let us proceed case by case.
\begin{enumerate}
	\item For $\{1,2,3\}$: let $z>w$ be non-dyadic numbers such that $z\in (1,1+w)$, $w\in(\frac{1}{2},1)$, and $z-w$ is non-dyadic. Then clearly $\mathcal{A}(\mathbf{x}) = [0,1+z+w]$ and $1,2\in \rng (h)$. Moreover, observe that for every $x$:
	\begin{align*}
	h(x) = f(x) + f(x-z) + f(x-w) + f(x-z-w),
	\end{align*}		
	and at most one of the numbers $x$, $x-z$, $x-w$, $x-z-w$ is dyadic. Moreover, the relations between sizes of $z$ and $w$ imply that always at most two of these these numbers lie in the interval $[0,1]$. Indeed,
	\begin{itemize}
	\item if $x\in [0,1]$ then $x-z<0$ and $x-z-w<0$,
	\item if $x\in (1,1+w]$ then $x-z-w<0$ and $x>1$,
	\item if $x\in [1+w,1+z+w]$ then $x>1$ and $x-w>1$.
	\end{itemize}	
	 Therefore, Lemma \ref{lematdodwojkowych} yields $h(x)\leq 3$. Observe, that also $h(x)=3$ for every dyadic $x\in (0,1)$. Hence, $\rng (h)=\{1,2,3\}$.
	\item For $\{1,2,3,4\}$: here we use a similar choice, as in the previous construction. The only change is that now we require $z-w$ to be dyadic. Then at most two of the numbers $x$, $x-z$, $x-w$, $x-z-w$ can be dyadic, so $h(x)\leq 4$. Moreover, $h(x)=3$ for every dyadic $x\in (w,1)$ and $h(x)=4$ for example for every $x=z+\frac{1}{2^{m}}$, where $m$ is any number such that $z+\frac{1}{2^{m}} < 1+w$. Hence, $\{1,2,3,4\}\in\mathcal{I}_{1}$.
	\item For $\{1,2,3,4,5\}$: let us take $z=w\in (\frac{1}{2},1)$ be any non-dyadic number. Then $\mathcal{A}(\mathbf{x})=[0,1+2z]$, $1,2\in\rng (h)$, and
	\begin{align*}
	h(x)=f(x)+2\cdot f(x-z)+f(x-2z).
	\end{align*}
	At most on of the values $x$, $x-z$, $x-2z$, so
	\begin{align*}
	h(x)\leq \max\{2+2\cdot 1, 1 +2\cdot 2\}=5.
	\end{align*}
	It remains to show, that $h(x)$ can be equal to $3$, $4$ and $5$.
	\begin{itemize}
		\item $h(x)=3$ for $x=z$.
		\item $h(x)=4$ for $x=1-\frac{1}{2^{m}}$, where $m$ is any positive integer such that $z<1-\frac{1}{2^{m}}$,
		\item $h(x)=5$ for $x=z+\frac{1}{2^{m}}$, where $m$ is such that $z+\frac{1}{2^{m}} < 1$.
	\end{itemize}
	\item For $\{1,2,3,4,6\}$: let $z=w=\frac{1}{2}$. Clearly $\mathcal{A}(\mathbf{x})=[0,2]$. Moreover, for every $x$:
	\begin{align*}
		h(x) = f(x)+2\cdot f\left(x-\frac{1}{2}\right)+f(x-1).
	\end{align*}
	Using the above equality, we can write down a formula for $h(x)$. For $x\in[0,1]$ we have:
	\begin{align*}
	h(x) = \left\{
	\begin{array}{ll}
		1, & \textrm{ if } x=0, \\
		2, & \textrm{ if } x\in (0,\frac{1}{2})\cap \mathcal{D},\\
		3, & \textrm{ if } x\in [0,1]\setminus \mathcal{D},\\
		4, & \textrm{ if } x=\frac{1}{2},\\
		6, & \textrm{ if } x\in (\frac{1}{2},1]\cap\mathcal{D}.
	\end{array}	
	\right.
	\end{align*}
	If $x\in (1,\frac{3}{2})$ then $h(x)=2f\left(x-\frac{1}{2}\right) +f(x-1)$. We have that either both, $x-\frac{1}{2}$ and $x-1$, are dyadic, or non of them. Therefore, in this case $h(x)\in \{3,6\}$. Then $h\left(\frac{3}{2}\right)= 2f(1) + f\left(\frac{1}{2}\right) = 4$. Finally, if $x\in (\frac{3}{2},2]$ then $h(x) = f(x-1)\in\{1,2\}$. Therefore, $\rng (h) =\{1,2,3,4,6\}$.
	\item For $\{1,2,3,4,5,6\}$: we take $z=w\in\ (\frac{1}{2},1)$ to be any dyadic number. Lemma \ref{mniejnizszesc} yields $\max (h)\leq 6$. Moreover, using the same reasoning, as in the proof of the case $\{1,2,3,4,5\}$, we get
	\begin{itemize}
		\item $h(0)=1$,
		\item $h(x)=2$ for every dyadic $x\in (0,z)$,
		\item $h(x)=3$ for every non-dyadic $x\in (z,1)$, 
		\item $h(z)=4$,
		\item $h(1)=5$,
		\item $h(x)=6$ for $x=z+\frac{1}{2^{m}}$, where $m$ is a positive integer such that $z+\frac{1}{2^{m}} < 1$.
	\end{itemize}
	Thus $\rng (h) = \{1,2,3,4,5,6\}$.
\end{enumerate}
\end{proof}

\begin{remark}\label{RemarkIntFilSetsConstructions}
In the cases of $\{1,2,3,4,5\}$ and $\{1,2,3,4,5,6\}$, one can use a more general construction: instead of taking $z=w\in (\frac{1}{2},1)$, we can take $\frac{1}{2}<w<z<1$ and assume that:
\begin{enumerate}
	\item for $\{1,2,3,4,5\}$: $z$ and $w$ are non-dyadic, $z-w$ is dyadic,
	\item for $\{1,2,3,4,5,6\}$: $z$ and $w$ are dyadic.
\end{enumerate}

Similarly, in order to obtain $\{1,2,3,4\}$ one can consider $\frac{1}{2}< w\leq z <1$ such that all $z$, $w$ and $z-w$ are non-dyadic. 
\end{remark}

\bigskip

Now we prove the following improvement of Corollary \ref{niematrojkitojestszostka} in the case of $m=3$.

\begin{theorem}\label{conajmniejsiedem}
Assume that $M\in \mathcal{I}_1$ is bounded, $M\neq \{1,2\}$ and $3\notin M$. Then $\max M\geq 7$.
\begin{proof}
Let $(x_n)$ be an (nonincreasing) interval-filling sequence with cardinal function $f$ with range $M$ satisfying all the given assumptions. 
Since $M$ is bounded, we know from Corollary \ref{geometrycznyogon} that $x_n=\frac{c}{2^n}$ for all $n\geq k$, where $k$ is some positive integer. Multiplying all the terms $x_{n}$ does not change $M$, so we can assume that $c=1$. Assume further that $k$ is the minimal value such that the tail $(x_n)_{n\geq k}$ is geometric.  Note that $M\neq\{1,2\}$, so $k\geq 2$ by Lemma \ref{lematdodwojkowych} and from Theorems \ref{dwojkowowymierna} and \ref{niedwojkowowymierna} we know that also $k\geq 3$. Let us consider two cases depending on whether $x_{k-1}$ is in the set $\mathcal{D}$ or not.

If $x_{k-1}=d\in \mathcal{D}$ (note that $x_{k-1}\leq r_{k-1}=1$ by Lemma \ref{LemCritIntervFill}) then $f_{k-1}(d)=3$. Moreover, if $x_{k-2}>d$ then also $f(d)=3$, which is impossible since we assume that $3\notin M$. Therefore, $x_{k-2}=d$. Hence, $(x_{n})_{n\leq k-2} = d\uplus (d\uplus \mathbf{b})$. By the proof of Proposition \ref{PropSetsI_1}, we know that for every $e\in (d,1)\setminus\mathcal{D}$ we have $f_{k-2}(e)=3$. Hence, if $k=3$ then $3\in M$, a contradiction. Hence, $k\geq 4$. If $x_{k-3}>d$, we can find $e\in (d,1)\setminus \mathcal{D}$ such that $d<e<x_{k-3}$. For such an $e$ we have $f (e)= f_{k-3}(e)=3$, a contradiction. Therefore, $x_{k-3}=d$. 

Let $m$ be such that $d+\frac{1}{2^{m}} < 2x_{k} = \frac{1}{2^{k-1}}$. We get
\begin{align*}
f\left(d+\frac{1}{2^m}\right)\geq f_{k-3}\left(d+\frac{1}{2^m}\right)=f_{k}\left(d+\frac{1}{2^m}\right)+3f_{k}\left(\frac{1}{2^m}\right)=2+3\cdot 2 = 8.
\end{align*}

Let us consider the second case, that is, $x_{k-1}=e\notin \mathcal{D}$. Then for every $x$ we have
\begin{align*}
f_{k-1}(x) = f_{k}(x) + f_{k}(x-e).
\end{align*}
In particular, if $m$ is such that $e+\frac{1}{2^{m}} < 2x_{k}$, we have $f_{k-1}\left(e + \frac{1}{2^{m}}\right) = 3$ by Lemma \ref{dwojkowowymierna}. If $x_{k-2}>e$ then we can find an $m$ so big that $e+\frac{1}{2^{m}} < x_{k-2}$. But then $f(e)=f_{k-1}(e)=3$, a contradiction. Therefore, $x_{k-2}=e$. However, then $f_{k-2}(e)=3$ and we can repeat the reasoning, and get $x_{k-3}=e$. 
\begin{align*}
f\left(e+\frac{1}{2^m}\right)\geq f_{k-3}\left(e+\frac{1}{2^m}\right)=f_{k}\left(e+\frac{1}{2^m}\right)+3f_{k}\left(\frac{1}{2^m}\right)=1+3\cdot 2 = 7.
\end{align*}

In both cases we obtained the required lower bounds.
\end{proof}
\end{theorem}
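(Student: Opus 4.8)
The plan is to reduce the problem to one very concrete family of sequences and then count representations directly. First I would invoke Corollary \ref{geometrycznyogon}: since $M$ is bounded and $(x_{n})$ is interval-filling, there is a least index $k$ such that the tail $(x_{n})_{n\ge k}$ is geometric with ratio $1/2$, and because rescaling the whole sequence does not change $M$, I may assume $(x_{n})_{n\ge k}=\mathbf b=(1/2,1/4,1/8,\dots)$, so $r_{k-1}=1$. Since $M\neq\{1,2\}$, Lemma \ref{lematdodwojkowych} gives $k\ge 2$, and Theorems \ref{dwojkowowymierna} and \ref{niedwojkowowymierna} --- which compute the range in the case $k=2$ to be $\{1,2,3\}$ or $\{1,2,3,4\}$ --- rule out $k=2$, as both contain $3$; hence $k\ge 3$. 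Minimality of $k$ together with Lemma \ref{dwasasiednie} forces the strict inequality $x_{k-1}<r_{k-1}=1$, while monotonicity gives $x_{k-1}\ge x_{k}=1/2$; set $a:=x_{k-1}\in[1/2,1)$. The argument then splits on whether $a$ is dyadic, and throughout I write $f_{j}$ for the cardinal function of $(x_{n})_{n\ge j}$ (so $f=f_{1}$), using repeatedly the identity $f_{j}(x)=f_{j+1}(x)+f_{j+1}(x-x_{j})$ and the fact that $f(x)=f_{j}(x)$ whenever $x<x_{j-1}$.

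The heart of the proof is a collapsing step forcing three consecutive terms in front of the geometric tail to coincide. In each branch I first locate a point at which $f_{k-1}$ already takes the value $3$, lying at or just above $a$: if $a$ is dyadic, Theorem \ref{dwojkowowymierna} gives $f_{k-1}(a)=3$; if $a$ is non-dyadic, Theorem \ref{niedwojkowowymierna} gives $f_{k-1}(a+1/2^{m})=3$ for every large $m$. Were $x_{k-2}>a$, such a point would lie strictly below $x_{k-2}$, so $f$ would inherit the forbidden value $3$; hence $x_{k-2}=a$. Now $(x_{n})_{n\ge k-2}=a\uplus(a\uplus\mathbf b)$ has exactly the shape analysed in Proposition \ref{PropSetsI_1}, from whose proof I would extract that the corresponding cardinal function equals $3$ at every non-dyadic point of $(a,1)$ when $a$ is dyadic, while the splitting identity gives $f_{k-2}(a)=f_{k-1}(a)+f_{k-1}(0)=2+1=3$ when $a$ is non-dyadic. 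This already forces $k\ge 4$, since for $k=3$ the whole sequence would be $a\uplus(a\uplus\mathbf b)$, whose range contains $3$. Repeating the argument one level lower --- a value-$3$ point of $f_{k-2}$ sits strictly below $x_{k-3}$ unless $x_{k-3}=a$ --- yields $x_{k-3}=x_{k-2}=x_{k-1}=a$. Note that the strict inequality $a<1$ coming from the minimality of $k$ is precisely what keeps the open intervals $(a,1)$ and $(a,\min\{x_{k-3},1\})$ nonempty, so that non-dyadic witnesses exist.

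Finally, with three copies of $a$ preceding the $1/2$-geometric tail I would evaluate $f_{k-3}$ at $y=a+1/2^{m}$ for $m$ so large that $y<1$. Any representation of $y$ uses some $j\in\{0,1,2,3\}$ of the three copies of $a$ and expresses $y-ja$ inside $\mathcal A(\mathbf b)=[0,1]$; since $a>1/2^{m}$ only $j\in\{0,1\}$ is feasible, whence
\begin{align*}
f_{k-3}(y)=\binom{3}{0}f_{\mathbf b}(a+1/2^{m})+\binom{3}{1}f_{\mathbf b}(1/2^{m})=f_{\mathbf b}(a+1/2^{m})+6,
\end{align*}
which by Lemma \ref{lematdodwojkowych} equals $2+6=8$ if $a$ is dyadic and $1+6=7$ if $a$ is non-dyadic. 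Since $f(y)\ge f_{k-3}(y)$, this gives $\max M\ge 7$ in both cases, as required.

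The main obstacle I anticipate is the bookkeeping in the collapsing step: one has to be sure that each detected value-$3$ point is not merely in the achievement set of the relevant tail but genuinely smaller than the preceding term, so that $f$ itself --- not just $f_{k-1}$ or $f_{k-2}$ --- takes the value $3$; that the collapse really does propagate through three consecutive indices; and that $k\ge 4$, so that those indices exist. Once this combinatorial skeleton is fixed, the remaining computations are routine applications of the splitting identity together with Lemma \ref{lematdodwojkowych}, Theorems \ref{dwojkowowymierna} and \ref{niedwojkowowymierna}, and the formulas established in the proof of Proposition \ref{PropSetsI_1}.
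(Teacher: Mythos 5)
Your proposal is correct and follows essentially the same route as the paper: reduce via Corollary \ref{geometrycznyogon} to a sequence with geometric tail, rule out $k\leq 2$ using Theorems \ref{dwojkowowymierna} and \ref{niedwojkowowymierna}, split on whether $x_{k-1}$ is dyadic, force $x_{k-3}=x_{k-2}=x_{k-1}$ by propagating a forbidden value $3$, and then count representations of $x_{k-1}+2^{-m}$ to get $7$ or $8$. Your write-up is in fact slightly more careful than the paper's in a few spots (the explicit normalization $r_{k-1}=1$, the justification via minimality of $k$ that $x_{k-1}<1$, and using the witness $a+2^{-m}$ rather than $a$ in the non-dyadic collapsing step), but these are presentational refinements of the same argument rather than a different proof.
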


\begin{corollary}
$\{1,2,4,6\}$, $\{1,2,4,5,6\}\notin \mathcal{I}_1$.
\end{corollary}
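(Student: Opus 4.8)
The plan is to derive this directly from Theorem \ref{conajmniejsiedem}, which is precisely the tool built for this purpose. Both candidate sets, $\{1,2,4,6\}$ and $\{1,2,4,5,6\}$, are finite (hence bounded), are visibly different from $\{1,2\}$, and do not contain the element $3$. These are exactly the three hypotheses of Theorem \ref{conajmniejsiedem}, so if either of them lay in $\mathcal{I}_1$, the theorem would force its maximum to be at least $7$.

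First I would observe that $\max\{1,2,4,6\} = \max\{1,2,4,5,6\} = 6 < 7$, which immediately contradicts the conclusion $\max M \geq 7$ supplied by Theorem \ref{conajmniejsiedem}. Hence neither set can belong to $\mathcal{I}_1$, and the corollary follows. No further case analysis or computation is needed — the work has already been done inside the proof of Theorem \ref{conajmniejsiedem}, where the two cases $x_{k-1}\in\mathcal{D}$ and $x_{k-1}\notin\mathcal{D}$ produce the bounds $8$ and $7$ respectively.

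There is essentially no obstacle here: the statement is a routine specialisation of the preceding theorem to two concrete sets. The only point worth double-checking is that the hypotheses really are met, i.e. that $3$ is genuinely absent from each listed set and that neither set equals $\{1,2\}$; both verifications are immediate by inspection. One could, if desired, also record in the same breath the companion facts that $\{1,2,4,6\}$ and $\{1,2,4,5,6\}$ are nonetheless elements of $\mathcal{I}$ via Corollary \ref{sumaprzedzialowskonczonyprzedzial} applied to suitable products, but that is not required for the statement as phrased.

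\begin{proof}
Both sets $\{1,2,4,6\}$ and $\{1,2,4,5,6\}$ are finite, hence bounded, are distinct from $\{1,2\}$, and do not contain $3$. If either of them were an element of $\mathcal{I}_1$, then Theorem \ref{conajmniejsiedem} would imply that its maximum is at least $7$. However, both sets have maximum equal to $6$, a contradiction. Therefore $\{1,2,4,6\}\notin \mathcal{I}_1$ and $\{1,2,4,5,6\}\notin \mathcal{I}_1$.
\end{proof}
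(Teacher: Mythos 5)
Your proof is correct and matches the paper's intent exactly: the corollary is stated in the paper immediately after Theorem \ref{conajmniejsiedem} with no separate argument, precisely because it is the routine specialisation you describe. Your verification of the hypotheses (bounded, not equal to $\{1,2\}$, $3\notin M$) and the contradiction with $\max M = 6 < 7$ is all that is needed.
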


\begin{lemma}\label{codolozycdoniedwojkowowymiernej}
Let $(x_n)$ be an interval-filling sequence such that there exists $k$ for which $x_n=\frac{1}{2^{n-k+1}}$ for $n\geq k$, $x_{k-1}\notin \mathcal{D}$, $x_{k-2}\geq 1$ and $x_n\geq 1+\sum_{i=n+1}^{k-2}x_i$ for every $n\leq k-3$. Denote the cardinal function of $(x_n)$ by $f$. Then $\max(f)\leq 4$. 
\begin{proof}
Let $g$ be the cardinal function of $(x_n)_{n\neq k-1}$. Then $\rng(g)=\{1,2\}$. Indeed, note that $(x_n)_{n<k-1}$ is quickly convergent, so each element of $\mathcal{A}((x_n)_{n<k-1})$ is obtained uniquely by a subsum of first $k-2$ terms, see \cite[Proposition 3.1]{cardfun}. Hence, if $d\in \mathcal{A}((x_n)_{n<k-1})+\mathcal{D}$ then $g(d)=2$ and if $x_n=1+\sum_{i=n+1}^{k-2}x_i$ then $g(x_n)=2$. The function $g$ evaluated on the other elements of $\mathcal{A}((x_n)_{n\neq k-1})$ is constantly equal to $1$. To conclude, observe that $f(x)=g(x)+g(x-e)\leq 2+2=4$. 
\end{proof}
\end{lemma}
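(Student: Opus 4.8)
The plan is to collapse the whole problem onto the much simpler sequence obtained by deleting the single non-dyadic term $x_{k-1}$. Write $\mathbf{y}$ for the sequence $(x_n)_{n\neq k-1}$, let $g$ be its cardinal function, and extend $g$ by $g(t)=0$ for $t\notin\mathcal{A}(\mathbf{y})$. Splitting every representation $x=\sum_{n\in A}x_n$ according to whether the index $k-1$ lies in $A$ yields
\[
f(x)=g(x)+g(x-x_{k-1})\qquad\text{for every }x,
\]
so $\max(f)\le 2\max(g)$, and it is enough to prove $\max(g)\le 2$. I would actually aim for the sharper $\rng(g)=\{1,2\}$, which costs nothing more and is cleaner to state.

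Next I would analyse $\mathbf{y}$ head-on. It is the finite block $x_1,\dots,x_{k-2}$ followed by the geometric tail $x_k,x_{k+1},\dots=\tfrac12,\tfrac14,\dots$, whose achievement set is $[0,1]$ and whose cardinal function takes only the values $1,2$ by Lemma \ref{lematdodwojkowych}. The hypotheses $x_{k-2}\ge 1$ and $x_n\ge 1+\sum_{i=n+1}^{k-2}x_i$ for $n\le k-3$ say exactly that each term of the finite block is at least the total mass of everything that follows it in $\mathbf{y}$; in particular $(x_1,\dots,x_{k-2})$ is quickly convergent, with each term strictly exceeding the sum of the later block terms, so by \cite[Proposition 3.1]{cardfun} its $2^{k-2}$ subsums are pairwise distinct and each is represented uniquely inside the block. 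Hence every $x\in\mathcal{A}(\mathbf{y})$ can be written $x=s+t$ with $s$ a block subsum and $t\in[0,1]$ a tail subsum, and the value of $s$ is forced unless some term satisfies the equality $x_n=1+\sum_{i=n+1}^{k-2}x_i$, in which case $x_n$ may be traded for the indices $n+1,\dots,k-2$ together with the entire geometric tail. Going through these finitely many ``trade'' values, I would check that each produces at most one alternative representation and cannot coexist at the same point with the factor-of-two coming from a dyadic $t$: a trade shifts $t$ by the full tail mass $1$, which leaves no room inside $[0,1]$. This gives $g(x)\le 2$, with $g(x)=2$ precisely on the dyadic tail values and the trade values and $g(x)=1$ elsewhere.

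Combining the two steps, for every $x$ we get $f(x)=g(x)+g(x-x_{k-1})\le 2+2=4$, as claimed. The only delicate point is the middle step: confirming that the block is genuinely quickly convergent and that the two independent sources of non-uniqueness for $g$ — the dyadic ambiguity inside the geometric tail and the ``trade'' ambiguity created by an equality in the hypothesis — are never active at the same point. Once the single non-dyadic term $x_{k-1}$ has been removed, every sum that occurs is governed by the dyadic and quickly-convergent structure, so this bookkeeping is routine rather than genuinely hard.
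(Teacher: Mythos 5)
Your proposal is correct and follows essentially the same route as the paper: delete the term $x_{k-1}$, use the splitting $f(x)=g(x)+g(x-x_{k-1})$, and show that the cardinal function $g$ of the remaining sequence has range $\{1,2\}$ via the quick convergence of the block $x_1,\ldots,x_{k-2}$ over the geometric tail. Your ``trade'' analysis just spells out in more detail the step the paper compresses into the observation that $g(x_n)=2$ when $x_n=1+\sum_{i=n+1}^{k-2}x_i$, including the check that a trade forces the tail part into $\{0,1\}$ and hence cannot compound with the dyadic ambiguity.
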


\begin{theorem}\label{niemaczworkitosiedem}
Assume that $M\in \mathcal{I}_1$ is bounded, $4\notin M$ and $\max M\geq 5$. Then $\max M\geq 7$.
\end{theorem}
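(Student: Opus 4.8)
The plan is to run the kind of descent on the initial, non-geometric part of the sequence that drives the proofs of Corollary~\ref{niematrojkitojestszostka} and Theorem~\ref{conajmniejsiedem}, but now exploiting the forbidden value $4$ in place of $3$. First I would dispose of the case $3\notin M$: since $\max M\ge 5$ we have $M\ne\{1,2\}$, so Theorem~\ref{conajmniejsiedem} already gives $\max M\ge 7$. Hence assume $\{1,2,3\}\subseteq M$. Take a nonincreasing interval-filling $(x_n)$ with $\rng(f)=M$; by Corollary~\ref{geometrycznyogon} rescale so that $x_n=2^{-n}$ for all $n\ge k$ with $k$ minimal. Lemma~\ref{LemCritIntervFill}, Lemma~\ref{dwasasiednie} and minimality give $x_{k-1}<r_{k-1}=2^{-(k-1)}$; Theorems~\ref{dwojkowowymierna}--\ref{niedwojkowowymierna} and the purely geometric case force $k\ge 3$; and $2\in M$ by Theorem~\ref{musibycdwojka}. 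By Corollary~\ref{CorIntFillA} every tail $(x_n)_{n\ge j}$ is interval-filling. Throughout I use that $f(y)=f_j(y)$ whenever $y<x_{j-1}$, so that if $4\in\rng(f_j)$ is attained at some such ``protected'' point then $4\in M$, a contradiction; and that $f_j(y)=\sum_{\varepsilon}f_k\bigl(y-\sum_i\varepsilon_i x_i\bigr)$, where $\varepsilon$ runs over the finitely many front terms of $(x_n)_{n\ge j}$ and $f_k$ is the cardinal function of the geometric tail, described by Lemma~\ref{lematdodwojkowych}.

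\textbf{Case A ($x_{k-1}$ dyadic at scale $2^{k-1}$).} By Theorem~\ref{dwojkowowymierna} applied to $(x_n)_{n\ge k-1}$ we get $\rng(f_{k-1})=\{1,2,3,4\}$ with $f_{k-1}=4$ on a dense subset of $(x_{k-1},r_{k-1})$; since $4\notin M$, protecting such a point forces $x_{k-2}=x_{k-1}=:d$. A direct count gives $f_{k-2}(d)=4$ ($d$ alone, and $d$ realised inside the geometric tail in two ways), so $x_{k-3}=d$ as well; if $k=3$ this already says $f(d)=4\in M$, impossible, so $k\ge 4$. With at least three consecutive copies of $d$, for a sufficiently small dyadic $\varepsilon$ with $d+\varepsilon<r_{k-1}$ we get $f_{k-3}(d+\varepsilon)\ge f_k(d+\varepsilon)+3f_k(\varepsilon)=2+6=8$; taking $\varepsilon$ small enough that $d+\varepsilon$ also lies below the first front term not equal to $d$ (or, if all of $x_1,\dots,x_{k-1}$ equal $d$, using $f=f_{k-3}$ directly) gives $\max M\ge 8$.

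\textbf{Case B ($x_{k-1}=:e$ non-dyadic).} Now $\rng(f_{k-1})=\{1,2,3\}$ by Theorem~\ref{niedwojkowowymierna}, which is useless since $\{1,2,3\}\subseteq M$, so the argument must descend one more level. If $x_{k-2}\ge r_{k-1}$ and $x_n\ge r_{k-1}+\sum_{i=n+1}^{k-2}x_i$ for every $n\le k-3$, then Lemma~\ref{codolozycdoniedwojkowowymiernej} gives $\max M\le 4$, contradicting $\max M\ge 5$; hence this configuration is excluded, leaving either $x_{k-2}<r_{k-1}$, or a front index $n_0\le k-3$ with $x_{n_0}<r_{k-1}+\sum_{i=n_0+1}^{k-2}x_i$ (the latter yields, via $x_{n_0}<\sum_{i>n_0}x_i$, extra representations fed back into the descent). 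In the principal case $x_{k-2}<r_{k-1}$ I split on whether $x_{k-2}$ is dyadic. If it is (so $x_{k-2}>e$, as $e\notin\mathcal{D}$), then $f_{k-2}(x_{k-2})=4$ ($x_{k-2}$ alone; $e$ plus the unique tail-representation of the non-dyadic number $x_{k-2}-e$; and the dyadic $x_{k-2}$ realised in the tail, twice), forcing $x_{k-3}=x_{k-2}$, after which $f_{k-3}(x_{k-2}+\varepsilon)=f_k(x_{k-2}+\varepsilon)+2f_k(\varepsilon)+f_k(x_{k-2}+\varepsilon-e)=2+4+1=7$ for small dyadic $\varepsilon$. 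If $x_{k-2}$ is non-dyadic one treats $x_{k-2}=e$ (then the sequence consisting of two copies of $e$ followed by the geometric tail realises $4$ at every dyadic point of $(e,r_{k-1})$, forcing $x_{k-3}=e$ and then $f_{k-3}(e+\varepsilon)=1+6=7$) and $x_{k-2}>e$ similarly, distinguishing whether $x_{k-2}-e\in\mathcal{D}$; in each branch one first exhibits a point where the appropriate tail's cardinal function equals $4$ — forcing the next front term into the run — and a run of two or three equal front terms then pushes a nearby point to $\ge 7$ representations, the small-$k$ degeneracies again collapsing to $f(y)=4\in M$. Thus $\max M\ge 7$ in all cases.

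The main obstacle is the bookkeeping in Case~B: for each of the finitely many configurations of $x_{k-1}$ and $x_{k-2}$ (relative to $\mathcal{D}$, to each other, and to $r_{k-1}$) one must locate precisely where the cardinal function of the relevant tail equals $4$ (to drive the descent) and where a short run of equal front terms produces $7$ or more; these are routine evaluations of $f_j(y)=\sum_\varepsilon f_k\bigl(y-\sum_i\varepsilon_i x_i\bigr)$ via Lemma~\ref{lematdodwojkowych}, but there are many of them, and the branch $x_{k-2}\ge r_{k-1}$ additionally requires Lemma~\ref{codolozycdoniedwojkowowymiernej} together with a separate estimate on the front part of $(x_n)$.
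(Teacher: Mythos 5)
Your overall strategy is the paper's: split on whether $x_{k-1}$ is dyadic, force runs of equal front terms by "protecting" points where a tail cardinal function equals $4$, and then count at least $7$ representations of a point near the end of the run. Your Case A coincides with the paper's first case, your preliminary reduction of the subcase $3\notin M$ to Theorem \ref{conajmniejsiedem} is valid (though the paper does not need it), and the portion of Case B with $x_{k-2}<r_{k-1}$ is essentially the paper's argument specialised to a trivial front part.

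The genuine gap is the branch you defer at the end of Case B: $x_{k-2}\geq r_{k-1}$ but the quick-convergence chain $x_{n}\geq r_{k-1}+\sum_{i=n+1}^{k-2}x_{i}$ breaks at some $n_{0}\leq k-3$. Saying that this "yields extra representations fed back into the descent" and that the remaining evaluations are "routine" is not an argument, and this is precisely where most of the work in the paper's proof happens. There one takes $m$ minimal with the chain condition holding on $\{m,\ldots,k-2\}$, sets $\alpha:=x_{m-1}$ and $\mathcal{G}:=\mathcal{A}((x_{n})_{n=m}^{k-2})$, and the relevant dichotomy is no longer whether $x_{m-1}$ is dyadic but whether $\alpha\in\mathcal{G}+\mathcal{D}$ and whether $\alpha\in\mathcal{G}+e+\mathcal{D}$ --- four subcases, each requiring its own evaluation of $f_{m-1}$ through the auxiliary cardinal function $h$ of $(x_{n})_{n\geq m,\,n\neq k-1}$. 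Two features have no analogue in your $\mathcal{G}=\{0\}$ computation: first, $h$ equals $2$ not only on $\mathcal{G}+\mathcal{D}$ but also on $\mathcal{G}\cap(\mathcal{G}+1)$ (front-part points with two representations), and one must separately rule out interference from this set (e.g.\ show $\alpha-e\notin\mathcal{G}\cap(\mathcal{G}+1)$, or choose the perturbation outside countably many bad translates); second, in the subcase $\alpha\in(\mathcal{G}+\mathcal{D})\cap(\mathcal{G}+e+\mathcal{D})$ the forcing mechanism is not used at all and one must directly exhibit a point with $7$ representations. Until this branch is actually carried out, the theorem is not proved.
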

\begin{proof}
Parts of the following proof are very similar to the proof of Theorem \ref{conajmniejsiedem}. For this reason we omit some details.

Let $(x_n)$ be an interval-filling sequence with cardinal function $f$ with range $M$ satisfying all given assumptions.
Let  $x_n=\frac{1}{2^{n-k-1}}$ for all $n\geq k$ for some positive integer $k$, and $x_{k-1}< 2x_k$. We need to consider the following cases: 

\vspace{0.15cm}

\noindent
{\bf 1.} $x_{k-1}=d$ for some $d\in \mathcal{D}\cap [x_{k},2x_{k})$. Then $f_{k-1}(d+\frac{c}{2^p})=4$ for any integer $p$ such that $d+\frac{1}{2^p}<2x_{k}$. Hence $x_{k-2}=d$. But then $f_{k-2}(d)=4$ and we need to have $x_{k-3}=d$. Then $f_{k-3}(d+\frac{1}{2^p})=8$.

\vspace{0.15cm}

\noindent 
{\bf 2.} $x_{k-1}=e$ for some $e\in  (x_{k},2x_{k})\setminus \mathcal{D}$. Let $m\leq k-1$ be the minimal index such that $x_n\geq 1+\sum_{i=n+1}^{k-2}x_i$ holds for every $n\in \{m,m+1,\ldots,k-2\}$ (note that $x_n\leq 1+e+\sum_{i=n+1}^{k-2}x_i$ since $(x_n)$ is interval-filling).

From Lemma \ref{codolozycdoniedwojkowowymiernej} we know that the cardinal function $f_m$ of $\mathcal{A}((x_n)_{n\geq m})$ satisfies $\max(f_m)\leq 4$. Since $\max(f)\geq 5$ we get that $(x_n)_{n\geq m}$ is not equal to the whole sequence $(x_n)$, that is $m>1$.

Let us consider all of the possibilities for value of 
\begin{align*}
\alpha:=x_{m-1}< 1+\sum_{u=m}^{k-2}x_u.
\end{align*}
We use the notation $\mathcal{G}:=\mathcal{A}((x_n)_{n=m}^{k-2})$ and let $h$ be the cardinal function of $(x_n)_{n\geq m,n\neq k-1}$. Then clearly
\begin{align*}
f_{m-1}(x)=h(x)+h(x-e)+h(x-\alpha)+h(x-\alpha-e).
\end{align*}
Moreover, by Lemma \ref{codolozycdoniedwojkowowymiernej} we know that $h$ achieves only values $1$ and $2$. More precisely, $h(x)=2$ if and only if $x\in \mathcal{G}+\mathcal{D}$ or $x\in \mathcal{G}\cap(\mathcal{G}+c)$, and $h(x)=1$ otherwise. Furthermore, $\mathcal{G}\cap(\mathcal{G}+c)\subseteq (x_n)_{n=k-2}^{m-1}$ since by the construction of $((x_n)_{n=m}^{k-2})$ the distance between two different elements of $\mathcal{G}$ is at least equal to $1$.

Let us now consider four subcases.

\vspace{0.15cm}

\noindent
{\bf 2.1.} First we assume that $\alpha\notin \mathcal{G}+\mathcal{D}$ and $\alpha\notin \mathcal{G}+e+\mathcal{D}$. Since $\mathcal{G}\cap(\mathcal{G}+1)$ is finite, for any large enough $p$ we get $\alpha+\frac{1}{2^p}\notin \mathcal{G}\cap(\mathcal{G}+1)$, and then
\begin{align*}
f_{m-1}\left(\alpha+\frac{1}{2^p}\right)=h\left(\alpha+\frac{1}{2^p}\right)+h\left(\alpha-e+\frac{1}{2^p}\right)+h\left(\frac{1}{2^p}\right)+h\left(\frac{1}{2^p}-e\right)=1+1+2+0=4.
\end{align*}
Since $\alpha+\frac{1}{2^p}$ can be arbitrarily close to $\alpha$ and we need to avoid $f\left(\alpha+\frac{c}{2^p}\right)=f_{m-1}\left(\alpha+\frac{c}{2^p}\right)=4$, we need to have $x_{m-2}=\alpha$.

Since the sets $\mathcal{G}+\mathcal{D}$,  $\mathcal{G}+e+\mathcal{D}$, $\mathcal{G}\cap(\mathcal{G}+c)$ are at most countable, one can find arbitrarily small $t\notin \mathcal{D}$ such that $\alpha+t\notin (\mathcal{G}+\mathcal{D})\cup ( \mathcal{G}+e+\mathcal{D})\cup (\mathcal{G}\cap(\mathcal{G}+1))$. Then 
\begin{align*}
f_{m-2}(\alpha+t)=f_{m-1}(\alpha+t)+f_{m-1}(t)=h(\alpha+t)+h(\alpha+t-e)+h(t)+h(t-e)+1=1+1+1+0+1=4.
\end{align*}
Similarly as before, we conclude that $x_{m-3}=\alpha$. 
Thus
\begin{align*}
f_{m-3}\left(\alpha+\frac{1}{2^p}\right)=f_{m-1}\left(\alpha+\frac{1}{2^p}\right)+2f_{m-1}\left(\frac{1}{2^p}\right)+f_{m-1}\left(\frac{1}{2^p}-\alpha\right)=4+2\cdot 2+0=8.
\end{align*}
This means that $f\left(\alpha+\frac{c}{2^p}\right)\geq f_{m-3}\left(\alpha+\frac{c}{2^p}\right)=8\geq 7$.

\vspace{0.15cm}

\noindent
{\bf 2.2.} Let $\alpha\in \mathcal{G}+\mathcal{D}$ and $\alpha\notin \mathcal{G}+e+\mathcal{D}$. At first suppose that $\alpha-e\in \mathcal{G}\cap(\mathcal{G}+1)$. Then we get $\alpha=x_m+e$, so $\alpha=z+d$ for some $z\in \mathcal{G}$, $d\in \mathcal{D}$. Clearly $z\neq x_m$, so we obtain $1\leq x_m-z=d-e<1$, which is impossible. Hence $\alpha-e\notin \mathcal{G}\cap(\mathcal{G}+1)$. We have
\begin{align*}
f_{m-1}(\alpha)=h(\alpha)+h(\alpha-e)+h(0)+h(-e)=2+1+1+0=4,
\end{align*}
which implies that $x_{m-2}=\alpha$. Hence, for large enough $p$ we get 
\begin{align*}
f_{m-2}\left(\alpha+\frac{1}{2^p}\right) & =f_{m-1}\left(\alpha+\frac{1}{2^p}\right)+f_{m-1}\left(\frac{1}{2^p}\right)=f_{m-1}\left(\alpha+\frac{1}{2^p}\right)+2 \\
& =h\left(\alpha+\frac{1}{2^p}\right)+h\left(\alpha+\frac{1}{2^p}-e\right)+h\left(\frac{1}{2^p}\right)+h\left(\frac{1}{2^p}-e\right)+2=2+1+2+0+2=7,
\end{align*}
which gives $f\left(\alpha+\frac{1}{2^p}\right)\geq f_{m-2}\left(\alpha+\frac{1}{2^p}\right)=7$.

\vspace{0.15cm}

\noindent
{\bf 2.3.} In the third case we consider $\alpha\notin \mathcal{G}+\mathcal{D}$ and $\alpha\in \mathcal{G}+e+\mathcal{D}$. At first, we show that $\alpha\in \mathcal{G}\cap(\mathcal{G}+1)$ is not possible. Indeed, take $\alpha=z+e+d$, where $z\in \mathcal{G}$, $d\in \mathcal{D}$. Suppose that $\alpha\in \mathcal{G}+1$, that is, $\alpha=w+1$ for some $w\in \mathcal{G}$. Clearly $w\neq z$ and we obtain $1\leq w-z=e+d-1<1+1-1=1$. Hence, $\alpha\notin \mathcal{G}\cap(\mathcal{G}+1)$. We get 
\begin{align*}
f_{m-1}(\alpha)=h(\alpha)+h(\alpha-e)+h(0)+h(-e)=1+2+1+0=4,
\end{align*}
so $x_{m-2}=\alpha$. Then for large enough $p$ we get 
\begin{align*}
f_{m-2}\left(\alpha+\frac{1}{2^p}\right) & =f_{m-1}\left(\alpha+\frac{1}{2^p}\right)+f_{m-1}\left(\frac{1}{2^p}\right)=f_{m-1}\left(\alpha+\frac{1}{2^p}\right) \\
& =h\left(\alpha+\frac{1}{2^p}\right)+h\left(\alpha+\frac{1}{2^p}-e\right)+h\left(\frac{1}{2^p}\right)+h\left(\frac{1}{2^p}-e\right)+2 \\
& =1+2+2+0+2=7.
\end{align*}
This implies that $f(\alpha+\frac{1}{2^p})\geq f_{m-2}(\alpha+\frac{1}{2^p})=7$.

\vspace{0.15cm}

\noindent
{\bf 2.4.} In the last case we have $\alpha\in \mathcal{G}+\mathcal{D}$ and $\alpha\in \mathcal{G}+e+\mathcal{D}$.
Since $\alpha\in \mathcal{G}+\mathcal{D}$, we get $\alpha+e\in \mathcal{G}+e+\mathcal{D}$. Then for large enough $p$ we have $\alpha+\frac{1}{2^p}\in \mathcal{G}+\mathcal{D}$, so $\alpha+e+\frac{1}{2^p}\in \mathcal{G}+e+\mathcal{D}.$ Hence, 
\begin{align*}
f_{m-1}\left(\alpha+e+\frac{1}{2^p}\right)=h\left(\alpha+e+\frac{1}{2^p}\right)+h\left(\alpha+\frac{1}{2^p}\right)+h\left(e+\frac{1}{2^p}\right)+h\left(\frac{1}{2^p}\right)=2+2+1+2=7.
\end{align*}

In all the cases we obtained $\max M\geq 7$ as desired.
\end{proof}

\begin{corollary}
$\{1,2,3,5\}$, $\{1,2,3,5,6\}\notin \mathcal{I}_1$.
\end{corollary}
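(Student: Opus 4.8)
The plan is to derive both non-membership statements directly from Theorem \ref{niemaczworkitosiedem}, so the argument is essentially a bookkeeping consequence of that theorem. First I would argue by contradiction for the first set: suppose $\{1,2,3,5\}\in\mathcal{I}_1$ and put $M=\{1,2,3,5\}$. Being a finite subset of $\mathbb{N}$, $M$ is bounded; moreover $4\notin M$ and $\max M=5\geq 5$, so all hypotheses of Theorem \ref{niemaczworkitosiedem} are satisfied. That theorem then forces $\max M\geq 7$, which contradicts $\max M=5$. Hence $\{1,2,3,5\}\notin\mathcal{I}_1$.

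The second case is handled identically. Put $M=\{1,2,3,5,6\}$ and note that it is finite, hence bounded, that $4\notin M$, and that $\max M=6\geq 5$; Theorem \ref{niemaczworkitosiedem} again yields $\max M\geq 7$, contradicting $\max M=6$, so $\{1,2,3,5,6\}\notin\mathcal{I}_1$. There is no genuine obstacle here — the only subtlety worth spelling out is that membership in $\mathcal{I}_1$ does not by itself guarantee boundedness of the range, but both sets in question are finite and therefore automatically bounded, so the boundedness hypothesis of Theorem \ref{niemaczworkitosiedem} comes for free in each instance.
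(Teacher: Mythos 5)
Your proposal is correct and is exactly the intended argument: the paper states this as an immediate corollary of Theorem \ref{niemaczworkitosiedem}, and your verification of the hypotheses (boundedness for free since the sets are finite, $4\notin M$, $\max M\geq 5$) followed by the contradiction with $\max M\geq 7$ is precisely what is meant. Nothing is missing.
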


\section{Remarks about $\mathcal{F}$ and $\mathcal{C}$}

In this section we study families $\mathcal{F}$ and $\mathcal{C}$. At first, let us focus on $\mathcal{F}$. Before we proceed, we show that all the sets in $\mathcal{F}$ can be obtained as ranges of cardinal functions of sequences consisting of positive integers only.

\begin{theorem}\label{ThmNaturalneStarcza}
Suppose that $M\in \mathcal{F}$. Then there exist positive integers $n_{1},\ldots ,n_{k}$ such that $A$ is the range of the cardinal function of $(n_{1},\ldots ,n_{k})$.
\end{theorem}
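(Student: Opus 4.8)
The plan is to reduce the problem to a statement about linear algebra over $\mathbb{Q}$. The key point is that for a \emph{finite} sequence $\mathbf{z}=(z_1,\ldots,z_k)$ of positive reals, the cardinal function is determined entirely by the combinatorial pattern of coincidences among subsums: define an equivalence relation $\sim_{\mathbf{z}}$ on $\mathcal{P}(\{1,\ldots,k\})$ by $A\sim_{\mathbf{z}}B\iff\sum_{n\in A}z_n=\sum_{n\in B}z_n$; then $f_{\mathbf{z}}(t)$ equals the size of the corresponding $\sim_{\mathbf{z}}$-block, and hence $\rng(f_{\mathbf{z}})$ is exactly the set of cardinalities of $\sim_{\mathbf{z}}$-blocks. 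Consequently it suffices, starting from a finite sequence $\mathbf{x}=(x_1,\ldots,x_k)$ with $\rng(f_{\mathbf{x}})=M$, to produce a sequence $\mathbf{y}=(y_1,\ldots,y_k)$ of positive \emph{rational} numbers with $\sim_{\mathbf{y}}=\sim_{\mathbf{x}}$; then clearing denominators (multiplication of all terms by a common positive integer, which does not change $\sim$, hence does not change the range of the cardinal function) gives a sequence of positive integers realising $M$.

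For $A\subseteq\{1,\ldots,k\}$ write $\mathbf{1}_A\in\{0,1\}^k$ for its indicator vector, so that $A\sim_{\mathbf{x}}B$ if and only if $\mathbf{1}_A-\mathbf{1}_B\in\{-1,0,1\}^k$ is orthogonal to $\mathbf{x}$. Let $W\subseteq\mathbb{Q}^k$ be the $\mathbb{Q}$-span of all vectors $\mathbf{1}_A-\mathbf{1}_B$ with $A\sim_{\mathbf{x}}B$, and set $W^{\perp}=\{y\in\mathbb{R}^k:y\cdot w=0\text{ for all }w\in W\}$. Since $W$ is spanned by integer vectors, $W^{\perp}$ is the solution set of a homogeneous linear system with rational coefficients, so it has a basis of rational vectors and $W^{\perp}\cap\mathbb{Q}^k$ is dense in $W^{\perp}$; also $\mathbf{x}\in W^{\perp}$. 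The sequence $\mathbf{y}$ will be chosen as a suitable point of $W^{\perp}\cap\mathbb{Q}^k$: then automatically $\mathbf{y}\perp W$, so $A\sim_{\mathbf{x}}B$ implies $A\sim_{\mathbf{y}}B$, and it remains to guarantee both positivity of the coordinates of $\mathbf{y}$ and that $\sim_{\mathbf{y}}$ is no coarser than $\sim_{\mathbf{x}}$.

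Here finiteness is used decisively. There are only finitely many pairs $(A,B)$ with $A\not\sim_{\mathbf{x}}B$, hence only finitely many ``forbidden'' difference vectors $v=\mathbf{1}_A-\mathbf{1}_B$; for each such $v$ we have $v\notin W$ (else $\mathbf{x}\perp W$ would give $v\cdot\mathbf{x}=0$, i.e.\ $A\sim_{\mathbf{x}}B$), so $\{y\in W^{\perp}:v\cdot y=0\}$ is a \emph{proper} subspace of $W^{\perp}$. The set of $y\in W^{\perp}$ with all coordinates positive is nonempty (it contains $\mathbf{x}$) and open in $W^{\perp}$, while the union of the finitely many proper subspaces $v^{\perp}\cap W^{\perp}$ is closed with empty interior in $W^{\perp}$; hence their difference is a nonempty open subset of $W^{\perp}$ and, by density, contains a point $\mathbf{y}\in W^{\perp}\cap\mathbb{Q}^k$ with all $y_i>0$. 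By construction $\mathbf{y}\perp W$ and $\mathbf{y}\cdot v\neq0$ for every forbidden $v$, so $\sim_{\mathbf{y}}=\sim_{\mathbf{x}}$, whence $\rng(f_{\mathbf{y}})=M$; clearing denominators then yields the required positive integers $n_1,\ldots,n_k$.

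The part requiring care — and the reason the argument is organised this way — is that over $\mathbb{Q}$ a positive-dimensional vector space \emph{can} be a countable union of proper subspaces, so one cannot naively ``avoid every integer vector not in $W$''. The resolution is precisely that the index set is finite, so only finitely many difference vectors $\mathbf{1}_A-\mathbf{1}_B$ ever occur; dodging finitely many proper subspaces while remaining in the (nonempty, open) positive region of the rational subspace $W^{\perp}$ is then a routine density argument.
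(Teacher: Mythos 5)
Your proof is correct, and it reaches the key intermediate step --- a finite sequence of positive \emph{rationals} with exactly the same pattern of subsum coincidences, after which clearing denominators is immediate --- by a genuinely different mechanism than the paper. The paper fixes a $\mathbb{Q}$-basis $b_{1},\ldots ,b_{s}$ of the $\mathbb{Q}$-span of the terms and substitutes $b_{j}\mapsto K^{j}$ for a single sufficiently large $K$, verifying by a bounded-coefficient (base-$K$) argument that equalities of subsums are both preserved and reflected; positivity of the resulting integers is then obtained by appealing to \cite[Section 2]{cardfun}. You instead encode the realized coincidences as the rational subspace $W$ spanned by the difference vectors $\mathbf{1}_{A}-\mathbf{1}_{B}$ with $A\sim_{\mathbf{x}}B$ and pick a generic rational point of $W^{\perp}$: since the index set is finite, only finitely many difference vectors are unrealized, so you need only avoid finitely many proper subspaces of $W^{\perp}$ while staying in the positive orthant (open and nonempty in $W^{\perp}$, as it contains $\mathbf{x}$), and density of $W^{\perp}\cap\mathbb{Q}^{k}$ in $W^{\perp}$ finishes the job. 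Your route is softer and shorter, handles positivity directly rather than by citation, and correctly isolates where finiteness is indispensable; the paper's route is more explicit (it writes down the new sequence) at the cost of the bookkeeping around the choice of $K$. One small point of precision: to conclude that $\{y\in W^{\perp}:v\cdot y=0\}$ is proper in $W^{\perp}$ for an unrealized $v$, the relevant fact is not merely $v\notin W$ but that $v\cdot\mathbf{x}\neq 0$ while $\mathbf{x}\in W^{\perp}$ --- which is exactly what your parenthetical argument establishes, so nothing is actually missing.
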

\begin{proof}
In this proof we will denote by $f_{\mathbf{y}}$ the cardinal function of a sequence $\mathbf{y}$.

Let $M$ be the set from the statement. Let $\mathbf{x}$ be a finite sequence of real numbers such that $\rng (f_{\mathbf{x}})=M$. 
At first we will show that there exists a finite sequence $\mathbf{r}$ of rational numbers such that $\rng (f_{\mathbf{r}})=\rng (f_{\mathbf{r}})$. Let us consider the linear space $\mathcal{H}$ over $\mathbb{Q}$ spanned by $\mathbf{x}$. Let $B=\{b_{1},\ldots ,b_{s}\}$ be a basis of $\mathcal{H}$. Then every element $h\in\mathcal{H}$ (and in particular every element of $\mathbf{x}$) can be expressed as
\begin{align*}
h = \sum_{j=1}^{s}z_{h,j}b_{j},
\end{align*}
where all the numbers $z_{h,j}$ are rational.

Let $K$ be large enough (to be specified later), and consider the following map:
\begin{align*}
\varphi : \mathcal{H}\ni \sum_{j=1}^{s}z_{h,j}b_{j} \mapsto \sum_{j=1}^{s}z_{h,j}K^{j} \in \mathbb{Q}.
\end{align*}

Let $\mathbf{r}:=\varphi (\mathbf{x})$. Then of course $\mathbf{r}\subseteq \mathbb{Q}$. We need to prove that if $K$ is large enough, $\varphi (\mathcal{A}(\mathbf{x}))=\mathcal{A}(\mathbf{r})$. That is, we want to have that if $x_{1},\ldots ,x_{n}\in\mathbf{x}$ then
\begin{align*}
\varphi\left(\sum_{i=1}^{n} x_{i}\right) = \sum_{i=1}^{n} \varphi (x_{i}).
\end{align*}
The above is true if we take for example $K:= 2+ 2\sum_{j=1}^{s} \sum_{x\in\mathbf{x}} |z_{x,j}|$. Moreover, one can check that with the same choice of $K$, the function $\varphi\mid_{\mathbf{x}}:\mathbf{x}\to \mathbf{r}$ is an injection, so a bijection because $\mathbf{x}$ and $\mathbf{r}$ are finite.

It remains to show that $\rng (f_{\mathbf{x}})=\rng(f_{\mathbf{r}})$. Let us consider two sequences $(\alpha_{1}',\ldots ,\alpha_{n}')$ and $(\beta_{1}',\ldots ,\beta_{m}')$ of elements of $\mathbf{r}$ such that
\begin{align}\label{EqNaturalneStarcza1}
\sum_{i=0}^{n}\alpha_{i}' = \sum_{i=0}^{m}\beta_{i}'.
\end{align}
For every $i$ and $j$ there exist unique elements $\alpha_{i}$ and $\beta_{j}$ from $\mathbf{x}$ such that $\alpha_{i}' = \varphi (\alpha_{i})$ and $\beta_{j}'=\varphi (\beta_{j})$. Then equality \eqref{EqNaturalneStarcza1} is equivalent to
\begin{align*}
\sum_{i=1}^{n}\sum_{j=1}^{s}z_{\alpha_{i},j} K^{j} = \sum_{i=1}^{m}\sum_{j=1}^{s}z_{\beta_{i},j} K^{j},
\end{align*}
that is,
\begin{align*}
\sum_{j=1}^{s}\left( \sum_{i=1}^{n} z_{\alpha_{i},j}\right) K^{j} = \sum_{j=1}^{s} \left( \sum_{i=1}^{m} z_{\beta_{i},j} \right) K^{j}.
\end{align*}
The value of $K$ is large enough that the above condition is equivalent to
\begin{align*}
\sum_{i=1}^{n} z_{\alpha_{i},j} = \sum_{i=1}^{m} z_{\beta_{i},j}
\end{align*}
for every $j$. This condition is further equivalent to
\begin{align*}
\sum_{j=1}^{s}\left( \sum_{i=1}^{n} z_{\alpha_{i},j}\right) b_{j} = \sum_{j=1}^{s} \left( \sum_{i=1}^{m} z_{\beta_{i},j} \right) b_{j},
\end{align*}
which after simple computations is the same, as
\begin{align*}
\sum_{i=0}^{n}\alpha_{i} = \sum_{i=0}^{m}\beta_{i}.
\end{align*}
In other words, we have just proved that
\begin{align*}
\sum_{i=0}^{n}\alpha_{i}' = \sum_{i=0}^{m}\beta_{i}' \hspace{1.5cm} \textrm{ if and only if } \hspace{1.5cm} \sum_{i=0}^{n}\alpha_{i} = \sum_{i=0}^{m}\beta_{i}.
\end{align*}
In particular, he ranges of the cardinal functions of sequences $\mathbf{x}$ and $\mathbf{r}$ are the same. 

Let $c$ be equal to the product of the denominators of the elements of $\mathbf{r}$. Then $\rng (f_{\mathbf{r}}) = \rng (f_{c\mathbf{r}})$, so we can assume without lose of generality, that $\mathbf{r}$ consists of integers only. Then by \cite[Section 2]{cardfun} we know that there exists a sequence $\mathbf{n}$ of positive integers such that $\rng (f_{\mathbf{r}}) = \rng (f_{\mathbf{n}})$. This finishes the proof.

\end{proof}

Note here, that it is proved in \cite{cardfun} that the following sets: 
\begin{align*}
& \{1\},\ \{1,2\},\ \{1,3\},\  \{1,2,3\},\ \{1,2,4\},\ \{1,3,4\},\ \{1,2,3,4\},\ \{1,2,3,5\},\\ & 
\{1,2,4,5\},\ \{1,3,6\},\ \{1,4,6\},\ \{1,2,3,6\},\ \{1,2,4,6\},\ \{1,3,4,6\},\ \{1,4,5,6\}, \\
& \{1,2,3,4,6\},\ \{1,2,4,5,6\},\ \{1,3,4,5,6\},\ \{1,2,3,4,5,6\}
\end{align*}
are in $\mathcal{F}$.

Now let us present results that provide some necessary conditions for a set $M$ to be long to $\mathcal{F}$. We apply our results to the case of ranges $M$ of cardinal functions satisfying $\max M\leq 6$. 



\begin{theorem}\label{Thm1,A}
Let $k\geq 4$ be an integer and $A\subseteq \left\{k,k+1,k+2,\ldots, {k \choose \lfloor k/2 \rfloor}-1\right\}$. Then $\{1\}\cup A\notin \mathcal{F}$.
\end{theorem}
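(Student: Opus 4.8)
The plan is to show that if $\mathbf{x}=(x_1,\ldots,x_\ell)$ is any finite sequence whose cardinal function $f$ has range contained in $\{1\}\cup A$, then in fact $\rng(f)=\{1\}$, which is impossible since a finite achievement set with only unique representations must be a singleton (and then its range cannot meet $A$, whose minimal element is $k\geq 4$). The key point is that for a finite sequence the value $f\bigl(\tfrac12\sum x_n\bigr)$ at the midpoint of $\mathcal{A}(\mathbf{x})$ is forced to be large whenever any repetition or coincidence among partial sums occurs. More precisely, I would first recall the symmetry of finite achievement sets: $x\in\mathcal{A}(\mathbf{x})$ iff $\sum x_n-x\in\mathcal{A}(\mathbf{x})$, with $f(x)=f(\sum x_n-x)$, so in particular at the centre point $c:=\tfrac12\sum x_n$ the quantity $f(c)$ counts subsets $A\subseteq\{1,\ldots,\ell\}$ with $\sum_{n\in A}x_n=c$, and these come in complementary pairs; hence $f(c)$ is even (or the midpoint is not attained at all).

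The heart of the argument is a counting/pigeonhole estimate at the midpoint. Suppose $\rng(f)\neq\{1\}$; then $\mathcal{A}(\mathbf{x})$ is not a Cantor-type singleton, so there is some point with at least two representations, and by a standard argument (splitting off the largest term on which two representations differ) one deduces that for \emph{every} index $j$ the sequence obtained by removing $x_j$ still has some point with $\geq 2$ representations, or — more usefully — that the number of distinct subset-sums is strictly less than $2^\ell$. The cleanest route: among the $2^\ell$ subsets of $\{1,\ldots,\ell\}$ there are at most $\#\mathcal{A}(\mathbf{x})$ distinct sums, and since $\mathcal{A}(\mathbf{x})$ sits inside an interval on which a midpoint symmetry acts, one can push the multiplicities to concentrate. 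I would instead argue directly that if $\max\rng(f)<k$ then the total number of representations satisfies $\sum_{x\in\mathcal{A}(\mathbf{x})}f(x)=2^\ell$, while $\#\mathcal{A}(\mathbf{x})\leq$ something forcing some $f(x)\geq\binom{\ell}{\lfloor \ell/2\rfloor}$ by an Erdős–Ko–Rado / LYM-type bound — indeed the number of subsets summing to the most popular value is at least $\binom{\ell}{\lfloor\ell/2\rfloor}$ when all $x_n$ are equal, and in general one reduces to a "union of antichains" estimate. The quantity $\binom{k}{\lfloor k/2\rfloor}$ appearing in the statement is exactly the middle binomial coefficient, which strongly suggests this is the intended mechanism: if the range avoids $\{k,\ldots,\binom{k}{\lfloor k/2\rfloor}-1\}$ and also contains no value $\geq\binom{k}{\lfloor k/2\rfloor}$, then effectively $\max\rng(f)<k$, and one derives a contradiction by exhibiting a point with $\geq\binom{k}{\lfloor k/2\rfloor}$ representations.

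Concretely, the steps I would carry out are: (1) reduce to integer sequences by Theorem~\ref{ThmNaturalneStarcza}; (2) observe that if $\{1\}\cup A$ with $A\subseteq\{k,\ldots,\binom{k}{\lfloor k/2\rfloor}-1\}$ were a range, then since $A$ contains no value in $\{2,3,\ldots,k-1\}$ and none $\geq\binom{k}{\lfloor k/2\rfloor}$, the function $f$ takes only the value $1$ and values in $[k,\binom{k}{\lfloor k/2\rfloor})$; (3) show that $\mathcal{A}(\mathbf{x})$ is genuinely non-trivial, so some $f(x)\geq k\geq 4$, pick a point $p$ with $f(p)=t\geq k$ maximal, and study the $t$ representing subsets $S_1,\ldots,S_t$; (4) the crucial combinatorial step — the symmetric differences $S_i\triangle S_j$ give relations $\sum_{n\in S_i\setminus S_j}x_n=\sum_{n\in S_j\setminus S_i}x_n$, and by translating by a common part and invoking the structure of these "exchange" relations (a Sperner-type argument on the family $\{S_i\}$), one produces a new point with at least $\binom{t}{\lfloor t/2\rfloor}\geq\binom{k}{\lfloor k/2\rfloor}$ representations, contradicting the assumed upper bound. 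The main obstacle is step (4): controlling how the $t$ representations of a single value can be "recombined" to force the middle-binomial many representations of some other value. I expect this requires the observation that one can add a fresh dominating term $y$ equal to $\sum_{n\in S_1}x_n$ (or repeat the configuration), after which subsets of an $\ell'$-element "block" all summing to the same value appear, and the number of such subsets is at least the width of the largest antichain among $\{S_i\}$, which by Dilworth/Sperner is $\geq\binom{t}{\lfloor t/2\rfloor}$; making this precise while keeping the achievement set finite is the delicate bookkeeping, but it is routine once the combinatorial core is identified.
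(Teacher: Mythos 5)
Your proposal correctly identifies the combinatorial mechanism behind the bound $\binom{k}{\lfloor k/2\rfloor}$ (pairwise disjoint representing sets whose $j$-fold unions all represent the same value $jy$, maximised at $j=\lfloor k/2\rfloor$), but the actual argument you sketch for step (4) has a genuine gap and would not go through as written. The missing idea is \emph{where} to locate the point with many representations: the paper takes $y$ to be the \emph{smallest} element of $\mathcal{A}(\mathbf{x})$ with $f(y)>1$ (so $f(y)\geq k$ since $\min A=k$), and then minimality immediately forces the representing sets $A_1,\ldots,A_k$ to be pairwise disjoint --- if $A_p\cap A_q\neq\emptyset$ one subtracts $x=\sum_{n\in A_p\cap A_q}x_n>0$ and finds that $y-x<y$ already has two representations via $A_p\setminus A_q$ and $A_q\setminus A_p$, a contradiction. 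With disjointness in hand, $f(jy)\geq\binom{k}{j}$ is immediate and the proof is three lines. Your route instead picks a point of \emph{maximal} multiplicity $t$ and appeals to a Sperner/Dilworth bound on the family $\{S_1,\ldots,S_t\}$; but an antichain inside a family of $t$ sets has size at most $t$, not $\binom{t}{\lfloor t/2\rfloor}$, so the inequality you invoke is not available, and nothing in your setup forces the $S_i$ to be disjoint. Moreover, the device of ``adding a fresh dominating term $y$'' modifies the sequence, which is not permitted when deriving a contradiction about the assumed sequence $\mathbf{x}$. The opening discussion of midpoint symmetry and the reduction to integer sequences are also unnecessary detours. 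In short: right target quantity, but the decisive step (minimality $\Rightarrow$ disjointness) is absent and the substitute argument fails.
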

\begin{proof}
Suppose that $A\subseteq \{k,k+1,k+2,\ldots, {k \choose \lfloor k/2 \rfloor}-1\}$ is such that  $\{1\}\cup A$ the range of the cardinal function $f$ of some finite sequence $\textbf{x}=(x_n)_{n=1}^{N}$. Let $y\in\mathcal{A}(\textbf{x})$ be the smallest number that can be represented in more than one way as a sum of elements of $\textbf{x}$. In other words, $y$ is the smallest such that $f(y)>1$, and from the fact, that $\min A= k$, we have $f(y)\geq k$. Hence, there exist distinct sets $A_{1},A_{2},\ldots ,A_{k}\subseteq \{1,\ldots ,N\}$ such that $y=\sum_{n\in A_i}x_n$ for each $i\in\{1,\ldots,k\}$.

Suppose that $A_p\cap A_q\neq\emptyset$ for some $p\neq q$, $p,q\in\{1,\ldots,k\}$. Let $x=\sum_{n\in A_p\cap A_q}x_n$. But then $y-x<y$ and 
\begin{align*}
y-x = \sum_{n\in A_p\setminus A_q}x_n = \sum_{n\in A_q\setminus A_p}x_n,
\end{align*}
so $f(y-x)>1$, contradicting the minimality of $y$. Therefore $A_p\cap A_q=\emptyset$ for every $p\neq q$, $p,q\in\{1,\ldots,k\}$.

It follows from the above discussion, that for every $j\in\{1,\ldots ,k\}$, number $jy$ has at least $\binom{k}{j}$ different expressions as a sum of elements of $\textbf{x}$. Indeed, every expression of the form
\begin{align*}
jy = \sum_{n\in A_{i_{1}}\cup \cdots \cup A_{i_{j}}} x_{n}
\end{align*} 
for some $1\leq i_{1}<\cdots <i_{j}\leq k$ is valid. Thus $f(jy)\geq \binom{k}{j}$, which is maximised by $j=\lfloor\frac{k}{2}\rfloor$, so 
\begin{align*}
f\left(\left\lfloor\frac{k}{2}\right\rfloor y\right) \geq \binom{k}{\lfloor\frac{k}{2}\rfloor}.
\end{align*}
However, we assume that  $\max A<{k \choose \lfloor k/2 \rfloor}$, a contradiction.
\end{proof}

\begin{corollary}
Suppose that one of the following holds:
\begin{enumerate}
\item $A\subseteq \{1,4,5\}$,
\item $A\subseteq \{1,5,6,7,8,9\}$,
\item $A\subseteq \{1,6,7,8,\ldots,19\}$.
\end{enumerate}
Then $A\notin \mathcal{F}$.
\end{corollary}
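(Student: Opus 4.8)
The plan is to read off all three cases directly from Theorem \ref{Thm1,A}; the only real content is choosing the parameter $k$ correctly in each case. First I would make the trivial reduction: since $1\in M$ for every $M\in\mathcal{R}\supseteq\mathcal{F}$, any $A$ with $1\notin A$ already fails to belong to $\mathcal{F}$, so I may assume $1\in A$ and write $A=\{1\}\cup A'$ with $A':=A\setminus\{1\}$ (tacitly setting aside the degenerate range $A=\{1\}$, which itself lies in $\mathcal{F}$). The task then reduces to the following: in each of the three cases, produce an integer $k\geq 4$ for which $A'\subseteq\{k,k+1,\ldots,\binom{k}{\lfloor k/2\rfloor}-1\}$; once this containment is in place, Theorem \ref{Thm1,A} immediately gives $A\notin\mathcal{F}$.

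Carrying this out amounts to three one-line checks. In case (1) I take $k=4$: since $\binom{4}{2}-1=5$, the forbidden block is exactly $\{4,5\}$, which contains $A'$. In case (2) I take $k=5$: since $\binom{5}{2}-1=9$, the block is $\{5,6,7,8,9\}\supseteq A'$. In case (3) I take $k=6$: since $\binom{6}{3}-1=19$, the block is $\{6,7,\ldots,19\}\supseteq A'$. In all three cases $k\geq 4$ and the required containment holds, so Theorem \ref{Thm1,A} applies and yields the conclusion $A\notin\mathcal{F}$.

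I do not expect any genuine obstacle: the statement is engineered precisely so that the upper endpoint $\binom{k}{\lfloor k/2\rfloor}-1$ of the forbidden block equals $5$, $9$, $19$ for $k=4,5,6$, and the three candidate sets are literally $\{1\}$ together with these blocks. The only subtlety worth a line of care is the trivial case $A=\{1\}$, which is why I separate it at the outset; apart from that, the corollary is a bookkeeping consequence of Theorem \ref{Thm1,A}.
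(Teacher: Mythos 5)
Your proof is correct and matches the paper's, which simply states that the corollary is an immediate consequence of Theorem \ref{Thm1,A}; your choices $k=4,5,6$ with $\binom{4}{2}-1=5$, $\binom{5}{2}-1=9$, $\binom{6}{3}-1=19$ are exactly the intended instantiations. Your explicit handling of the degenerate cases ($1\notin A$, and $A=\{1\}$, which genuinely does lie in $\mathcal{F}$ and must be excluded) is more careful than the paper's one-line proof, but the argument is the same.
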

\begin{proof}
This is an immediate consequence of Theorem \ref{Thm1,A}.
\end{proof}

Note that the number $\binom{k}{k/2}-1$ in the statement of Theorem \ref{Thm1,A} cannot be improved for all $k$ because for example $\{1,4,6\}$, $\{1,5,10\}$, $\{1,6,15,20\}\in \mathcal{F}$.

Let us move to the set $\mathcal{C}$. In the paper \cite{cardfun} it is proved that in fact $\mathcal{F}\subseteq \mathcal{C}$ (the proof uses the fact that $\{1\}\in \mathcal{C}$ and a method similar to the one used in the proof of Lemma \ref{dolozskonczony}). Unfortunately, no other method of constructing Cantor set with the prescribed range of cardinal function is known.

Let $\Fin$ denote the ideal of all finite subsets of $\mathbb{N}$. Clearly $\mathcal{F}\subseteq \mathcal{C}$ and $\mathcal{F}\subseteq \Fin$. The following question seems to be very interesting.

\begin{problem}
Does the equality $\mathcal{F}=\mathcal{C}\cap \Fin$ hold?
\end{problem}

\section{Remarks about $\Cv$}
The case of Cantorval is the least studied for cardinal functions. 
Note that the achievement sets was mostly studied for a multigeometric sequences \cite{BBFS}, \cite{BBGS}, \cite{BFS}, that is of the form 
$$(x_n)=(a_1,a_2,\ldots,a_m;q):=(a_1q,a_2q,\ldots,a_mq,a_1q^2, a_2q^2,\ldots,a_mq^2,a_1q^3,\ldots).$$
If we denote
\begin{align*}
\Sigma:=\mathcal{A}(a_1,\ldots,a_m)=  \left\{\ \sum_{n=1}^{m}\varepsilon_na_n \ \Bigg|\ (\varepsilon_n)\in\{0,1\}^{m}\ \right\}
\end{align*}
then
\begin{align*}
\mathcal{A}(a_1,a_2,\ldots,a_m;q)=\left\{\ \sum_{n=1}^{\infty}x_nq^n\ \Bigg| \ (x_n)\in\Sigma^{\infty}\ \right\}.
\end{align*}

The only known result regarding ranges of cardinal function in $\Cv$ comes from \cite{BPW}, where the Guthrie--Nymann Cantorval $\GN:=\mathcal{A}\left(3,2;\frac{1}{4}\right)$ was considered. The result \cite[Theorem 6.1]{BPW} implies that $\{1,2\}\in\Cv$. More precisely, a point $x\in \GN$ has two representations
\begin{align*}
x=\sum_{n=1}^{\infty}\frac{a_n}{4^n}=\sum_{n=1}^{\infty}\frac{b_n}{4^n}
\end{align*}
if and only if there exists the finite or infinite sequence $n_0<n_1<\ldots$ such that 
\begin{enumerate}
\item $a_k=b_k$ for $0<k<n_0$;
\item $a_{n_0}=2$ and $b_{n_0}=3$;
\item $a_{n_k}=5$ and $b_{n_k}=0$ for odd $k$;
\item $a_{n_k}=0$ and $b_{n_k}=5$ for even $k>0$;
\item  $a_{i}\in\{3,5\}$ and $a_i-b_{i}=3$, whenever $n_{2k}<i<n_{2k+1}$;
\item  $a_{i}\in\{0,2\}$ and $b_i-a_{i}=3$, whenever $n_{2k+1}<i<n_{2k+2}$.
\end{enumerate} 
Otherwise, a point has a unique representation.

Our Lemma \ref{dolozskonczony} allows us to immediately produce more examples of sets from $\Cv$. In particular, we get the following sets with the maximal element at most $6$.

\begin{proposition}
$\{1,2,4\}$, $\{1,2,3,6\}$, $\{1,2,3,4,6\}\in \Cv$.
\end{proposition}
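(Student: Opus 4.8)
The plan is to obtain all three memberships as direct applications of Lemma~\ref{dolozskonczony}, in the form $\mathcal{F}\cdot \Cv=\Cv$. The only genuinely nontrivial ingredient is that $\Cv$ is nonempty and in fact contains $\{1,2\}$; this is precisely what the description of multiple representations in the Guthrie--Nymann Cantorval $\GN=\mathcal{A}\!\left(3,2;\frac14\right)$ recalled above (from \cite[Theorem~6.1]{BPW}) provides, since that description shows that every point of $\GN$ has either one or two representations and that both values are attained, whence $\rng$ of the corresponding cardinal function equals $\{1,2\}$.

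Next I would record three elements of $\mathcal{F}$ together with explicit finite witnessing sequences, exactly as in the proof of Corollary~\ref{sumaprzedzialowskonczonyprzedzial}: $\{1,2\}=\rng(f_{(1,1)})$, $\{1,3\}=\rng(f_{(1,1,1)})$, and $\{1,2,3\}=\rng(f_{(1,1,2,3)})$. Then I would carry out the elementary set-product computations
\begin{align*}
\{1,2\}\cdot\{1,2\}=\{1,2,4\},\qquad \{1,2\}\cdot\{1,3\}=\{1,2,3,6\},\qquad \{1,2\}\cdot\{1,2,3\}=\{1,2,3,4,6\}.
\end{align*}

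Finally, taking the factor $\{1,2\}$ from $\Cv$ and the other factor from $\mathcal{F}$ in each of the three products, Lemma~\ref{dolozskonczony} gives that each product lies in $\mathcal{F}\cdot\Cv=\Cv$, which is exactly the claim. I do not expect a real obstacle here: the argument is routine bookkeeping of set products together with a citation, and the only point requiring a little care is that one must use $\{1,2\}$ as the $\Cv$-factor (which is legitimate, since $\{1,2\}\in\Cv$), so that the remaining, possibly non-Cantorval sets $\{1,3\}$ and $\{1,2,3\}$ only need to be known to lie in $\mathcal{F}$.
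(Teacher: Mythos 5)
Your proposal is correct and follows exactly the paper's own route: it invokes $\{1,2\}\in\Cv$ via the Guthrie--Nymann Cantorval and then applies Lemma~\ref{dolozskonczony} with the finite factors $\{1,2\}$, $\{1,3\}$, $\{1,2,3\}$ to produce the three products. No gaps; the set-product computations and the choice of which factor lies in $\Cv$ are handled as in the paper.
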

\begin{proof}
As we mentioned, $\{1,2\}\in \Cv$. Hence, Lemma \ref{dolozskonczony} yields $F\cdot \{1,2\}\in \Cv$ for every $F\in\mathcal{F}$. We get the statement by choosing $F=\{1,2\}$, $\{1,3\}$ and $\{1,2,3\}$.
\end{proof}

Note that if we want to construct $M\in \Cv\cap \Fin$ from a multigeometric sequence, the shape of the set $\Sigma$ is crucial. More precisely, the function 
\begin{align*}
\{0,1\}^{m}\ni (\varepsilon_n)\rightarrow \sum_{n=1}^{m}\varepsilon_na_n\in \Sigma
\end{align*}
needs to be a bijection onto $\Sigma$. Otherwise $\mathfrak{c}\in M$, and thus $M\notin\Fin$.


In order to find other sets in $\Cv$, we will need to perform more computations.

\begin{proposition}
$\{1,2,3,4\}$, $\{1,2,3,4,5,6\}\in \Cv$.
\end{proposition}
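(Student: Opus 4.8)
The plan is to exhibit, for each of $\{1,2,3,4\}$ and $\{1,2,3,4,5,6\}$, an explicit multigeometric sequence whose achievement set is a Cantorval and whose cardinal function has exactly the prescribed range. The natural template is the Guthrie--Nymann type construction: one starts from the Cantorval $\GN = \mathcal{A}(3,2;\tfrac14)$, whose cardinal function has range $\{1,2\}$ by \cite[Theorem 6.1]{BPW}, and modifies the digit set $\Sigma$ and the ratio $q$ so that the new achievement set is still a Cantorval (this requires the multigeometric inequalities guaranteeing interval overlaps of the right kind, i.e. the Kakeya-type conditions from \cite{MarMis} applied to the blocks, but not full interval-filling) while forcing a few controlled extra coincidences among representations. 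For $\{1,2,3,4\}$ the idea is to take a sequence of the form $(a_1,\ldots,a_m;q)$ where $\Sigma$ contains a small number of non-unique digit sums (one value with multiplicity two), producing via the self-similar structure points with $1$, $2$, $3$ and $4$ representations but nothing higher; the value $4$ arises from combining a digit-level non-uniqueness with a tail-level non-uniqueness in the manner of the $f(x)=f_1(x)+\text{(shift)}$ decomposition used repeatedly in Section~4.

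First I would fix the two candidate sequences. A concrete guess for $\{1,2,3,4\}$ is something like $\mathbf{x} = (a_1,\dots,a_m;q)$ with $\Sigma$ having a single repeated value and $q$ chosen in the Cantorval regime; then I would compute $\mathcal{A}(\mathbf{x})$ and verify it is a Cantorval by checking the relevant inequalities between $a_iq^n$ and the tails $r_n$ (finitely many strict ones only among the block structure, so that the set has nonempty interior but is not a finite union of intervals). For $\{1,2,3,4,5,6\}$ one wants the maximal multiplicity pushed up to $6$; a plausible route is to overlay the $\{1,2\}$-Cantorval structure with a finite factor: by Lemma~\ref{dolozskonczony}, $\mathcal{F}\cdot\Cv = \Cv$, so if one can realize $\{1,2,3\}$ as a range in $\Cv$ then $\{1,2\}\cdot\{1,2,3\} = \{1,2,3,4,6\}$ is in $\Cv$ — but that is not quite $\{1,2,3,4,5,6\}$, so a direct construction giving the value $5$ as well (an odd multiplicity, hence necessarily coming from a non-symmetric coincidence) is needed. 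The cleanest approach is therefore to design $\Sigma$ directly so that the induced tree of representations has leaf-counts hitting every value in $\{1,\dots,6\}$.

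Next I would carry out the bookkeeping of representation counts. This is the technical heart: one partitions $\mathcal{A}(\mathbf{x})$ according to the first digit block, writes $f(x) = \sum_{\sigma\in\Sigma} f\big(\tfrac{x-\sigma q}{q}\big)$ (with the convention that terms outside the achievement set contribute $0$), and then analyzes which points can simultaneously lie in several shifted copies. Because the construction is self-similar, this reduces to a finite combinatorial check: identify the finitely many "overlap patterns" of the scaled copies $\sigma q + q\mathcal{A}(\mathbf{x})$, as in the explicit digit conditions (1)--(6) quoted from \cite{BPW}, and read off the maximal number of compatible digit choices at each level. I would then confirm (a) that each value $1,\dots,4$ (resp. $1,\dots,6$) is actually attained — this is the easy direction, done by exhibiting a specific point — and (b) that no larger value occurs, which is the genuine obstacle, since one must rule out accidental extra coincidences propagating through infinitely many blocks.

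The main obstacle I anticipate is precisely part (b): proving the upper bound $\max\rng(f)\le 4$ (resp. $\le 6$). Unlike the interval-filling case, where tails are genuine intervals and one can use density arguments freely, here the tail is a Cantorval, so one needs a careful induction on the block index showing that the "excess multiplicity" cannot accumulate — essentially a finite-state-automaton argument on the overlap patterns, verifying that the transition structure never creates more than the claimed number of branches. Establishing that the automaton has the right maximal branching, and that the Cantorval property (nonempty interior, not a finite union of intervals) genuinely holds for the chosen parameters rather than degenerating to an interval union or a Cantor set, will require the bulk of the computation; everything else is routine.
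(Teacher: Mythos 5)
There are two genuine gaps here, one of execution and one of strategy.

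First, the proposal is a plan rather than a proof: you never write down either sequence, never verify the Cantorval property for your candidates, and you explicitly defer the upper bounds $\max\rng(f)\leq 4$ and $\leq 6$ (``the main obstacle I anticipate is precisely part (b)''). Since attaining each value is easy and the whole content of the proposition is the pair (explicit sequence, verified range), nothing checkable has actually been established.

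Second, and more seriously, the specific strategy you propose for $\{1,2,3,4\}$ --- a multigeometric sequence $(a_1,\ldots,a_m;q)$ in which ``$\Sigma$ contains a small number of non-unique digit sums (one value with multiplicity two)'' --- cannot work. If some value $\sigma\in\Sigma$ has two distinct digit representations at the block level, then the self-similarity you invoke propagates this choice independently through every block: a point such as $\sum_{n}\sigma q^{n}$ acquires continuum many representations, so $\mathfrak{c}$ enters the range and the set cannot lie in $\Fin$. (The paper makes exactly this observation immediately before the proposition: for $M\in\Cv\cap\Fin$ the map $\{0,1\}^{m}\to\Sigma$ must be a bijection.) The paper's actual route sidesteps all of this: it keeps the Guthrie--Nymann sequence $\left(3,2;\frac{1}{4}\right)$, whose cardinal function $f$ has range $\{1,2\}$, untouched, and merely prepends one term $\frac{3}{4}$ (respectively two terms $\frac{11}{12},\frac{11}{12}$) chosen to overlap the achievement set. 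Adding finitely many terms preserves the Cantorval property, and the decomposition $g(x)=f(x)+f\left(x-\frac{3}{4}\right)$ gives $\max g\leq 4$ for free, while Lemma \ref{mniejnizszesc} (with $z=w=\frac{11}{12}$ and $z+w>\frac{5}{3}=\sum x_{n}$) gives $\max h\leq 6$; the remaining work is only to exhibit points attaining each value, using the characterisation of double representations in $\GN$ from \cite{BPW}. You gesture at the shift decomposition and at $\mathcal{F}\cdot\Cv=\Cv$, but you dismiss the finite-modification idea without noticing that a single prepended term \emph{inside} the Cantorval (rather than far above it, as in the product construction) is exactly what produces the odd values $3$ and $5$ that the product $\{1,2\}\cdot\{1,2,3\}$ cannot reach.
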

\begin{proof}
We consider both cases separately. We construct required sequences by adding finitely many terms to the sequence $\left(3,2;\frac{1}{4}\right)$. Hence, their achievement sets are Cantorvals.

In order to achieve $\{1,2,3,4\}$, let us consider $\mathbf{x} := \frac{3}{4}\uplus \left(3,2;\frac{1}{4}\right)$. Let $f$ be the cardinal function of the sequence $\left(3,2;\frac{1}{4}\right)$, and $g$ be the cardinal function of $\mathbf{x}$. Then
\begin{align*}
g(x)=f(x)+f\left(x-\frac{3}{4}\right)\leq 2+2=4.
\end{align*}

In order to conclude the proof in this case, observe that
\begin{itemize}
\item $g(0)=1$,
\item $g\left(\frac{3}{16}\right)=f\left(\frac{3}{16}\right)=2$,
\item $g\left(\frac{3}{4}\right)=f\left(\frac{3}{4}\right)+f(0)=2+1=3$,
\item $g\left(\frac{3}{4}+\frac{3}{16}\right)=f\left(\frac{3}{4}+\frac{3}{16}\right)+f\left(\frac{3}{16}\right)=2+2=4$.
\end{itemize}
Hence, $\rng (f)=\{1,2,3,4\}$.


The case of $\{1,2,3,4,5,6\}$ is more complicated. Consider $\mathbf{x}:= \frac{11}{12}\uplus \left(\frac{11}{12}\uplus \left(3,2;\frac{1}{4}\right)\right)$. By Lemma \ref{mniejnizszesc} we get that $\mathcal{A}(\mathbf{x})\subseteq \{1,2,3,4,5,6\}$.

Let $h$, $g$, $f$ denote the cardinal functions of $\mathbf{x}$, $\frac{11}{12}\uplus \left(3,2;\frac{1}{4}\right)$ and $\left(3,2;\frac{1}{4}\right)$, respectively. In particular, $g(x)=f(x)+f\left(x-\frac{11}{12}\right)$. Therefore,
\begin{align*}
h(x) & =g(x)+g\left(x-\frac{11}{12}\right)=f(x)+f\left(x-\frac{11}{12}\right)+f\left(x-\frac{11}{12}\right)+f\left(x-\frac{11}{12}-\frac{11}{12}\right) \\
& =f(x)+2f\left(x-\frac{11}{12}\right)+f\left(x-\frac{11}{6}\right).
\end{align*}

We have $\rng(f)=\{1,2\}$. Since $h(x)=f(x)$ for all $x\in \GN\cap[0,x_1)$, we get $1,2\in \rng(h)$.

Note that $\frac{11}{12}\in [\frac{2}{3},1]\subseteq \GN$. We have
\begin{align*}
\frac{11}{12}=\frac{3}{4}+\sum_{n=2}^{\infty}\frac{2}{4^n}=\frac{2}{4}+\sum_{n=2}^{\infty}\frac{3+2}{4^n},
\end{align*}
so $f(\frac{11}{12})=2$. Therefore, the following equalities hold:
\begin{align*}
h\left(\frac{11}{6}\right) & = f\left(\frac{11}{6}\right)+2\cdot f\left(\frac{11}{12}\right)+f(0)=0+2\cdot 2+1=5, \\
h\left(\frac{11}{12}\right) & =f\left(\frac{11}{12}\right)+2\cdot f(0)+f\left(-\frac{11}{12}\right)=2+2\cdot 1+0=4.
\end{align*}
Hence, $4,5\in \rng (h)$. 

Now observe that
\begin{align*}
\frac{7}{6}=\frac{3}{4}+\sum_{n=2}^{\infty}\frac{5}{4^n}\in \GN.
\end{align*}
By \cite[Theorem 6.1]{BPW} we obtain $f\left(\frac{7}{6}\right)=1$. Moreover, 
\begin{align*}
\frac{7}{6}-\frac{11}{12}=\frac{1}{4}=\sum_{n=2}^{\infty}\frac{3}{4^n},
\end{align*}
so $f\left(\frac{1}{4}\right)=1$. Therefore, we have 
\begin{align*}
h\left(\frac{7}{6}\right)=f\left(\frac{7}{6}\right)+2\cdot f\left(\frac{1}{4}\right)+f\left(-\frac{2}{3}\right)=1+2\cdot 1 + 0 = 3.
\end{align*} 

In order to finish the calculations, we need to show that $6\in \rng (h)$. 
We have 
\begin{align*}
\frac{3}{64}=\frac{2}{64}+\sum_{n=4}^{\infty}\frac{3}{4^n},
\end{align*}
so $f\left(\frac{3}{64}\right)=2$. What is more, 
\begin{align*}
\frac{185}{192}=\frac{11}{12}+\frac{3}{64}=\frac{3}{4}+\frac{2}{16}+\frac{5}{64}+\sum_{n=4}^{\infty}\frac{2}{4^n}=\frac{3}{4}+\frac{3}{16}+\sum_{n=4}^{\infty}\frac{5}{4^n},
\end{align*}
which implies that $f\left(\frac{185}{192}\right)=2$. Therefore,
\begin{align*}
h\left(\frac{361}{192}\right)=h\left(\frac{11}{6}+\frac{3}{64}\right)=f\left(\frac{11}{6}+\frac{3}{64}\right)+2\cdot f\left(\frac{11}{12}+\frac{3}{64}\right)+f\left(\frac{3}{64}\right)=0+2\cdot 2+2=6.
\end{align*}
We obtained $\rng (h)=\{1,2,3,4,5,6\}$ and the proof is finished.
\end{proof}

\newpage
\section{Summary}
In this chapter we sum up  known results connected with ranges $M$ with $\max M\leq 6$ and for $M=\{1,3,5,7\}$. All of them comes from our paper and the paper \cite{cardfun}. We use symbol \ding{52}  to denote that it is possible to construct a sequence with the given type of achievement set and range of cardinal function, while symbol \ding{56} means that it has been proved that it can not be constructed. The blank spaces are left for unknown cases.
\\\begin{center}
\begin{tabular}{|c||c|c|c|c|c|c|}
\hline
\hline
range $\backslash$ type  & $\mathcal{I}_1$ & $\mathcal{I}$ & $\mathcal{F}$ & $\mathcal{C}$ & $\Cv$ & $\mathcal{R}$\\
\hline
\hline
$\{1\}$ & \ding{56}& \ding{56} & \ding{52} & \ding{52} & \ding{56} & \ding{52}\\
\hline
$\{1,2\}$ & \ding{52} & \ding{52} & \ding{52} & \ding{52} & \ding{52} & \ding{52}\\
\hline
$\{1,3\}$ & \ding{56} & \ding{56} & \ding{52} & \ding{52} & & \ding{52}\\
\hline
$\{1,2,3\}$ & \ding{52} & \ding{52} & \ding{52} & \ding{52}& & \ding{52}\\
\hline
$\{1,4\}$ & \ding{56} & \ding{56} & \ding{56} &  & & \\
\hline
$\{1,2,4\}$ & \ding{56} & \ding{52} & \ding{52} & \ding{52} & \ding{52} & \ding{52}\\
\hline
$\{1,3,4\}$ & \ding{56}  & \ding{56} & \ding{52} & \ding{52} & & \ding{52}\\
\hline
$\{1,2,3,4\}$  & \ding{52}  & \ding{52} & \ding{52} & \ding{52} & \ding{52} & \ding{52}\\
\hline
$\{1,5\}$ & \ding{56} & \ding{56} & \ding{56} &  & & \\
\hline
$\{1,2,5\}$  & \ding{56} &  &  &  & & \\
\hline
$\{1,3,5\}$  & \ding{56} & \ding{56} &  &  & & \\
\hline
$\{1,4,5\}$   & \ding{56} & \ding{56} &  \ding{56} &  & & \\
\hline
$\{1,2,3,5\}$   & \ding{56}  &  &  \ding{52}  & \ding{52}  & & \ding{52} \\
\hline
$\{1,2,4,5\}$  & \ding{56} &  &  \ding{52}  & \ding{52}  & & \ding{52} \\
\hline
$\{1,3,4,5\}$   & \ding{56} & \ding{56} &  &  & & \\
\hline
$\{1,2,3,4,5\}$   & \ding{52}  & \ding{52}  &  &  & &  \ding{52} \\
\hline
$\{1,6\}$   &  \ding{56} & \ding{56} &   \ding{56} &  & & \\
\hline
$\{1,2,6\}$   &  \ding{56} &  &  &  & & \\
\hline
$\{1,3,6\}$   & \ding{56} & \ding{56} & \ding{52} & \ding{52} & & \ding{52} \\
\hline
$\{1,4,6\}$   & \ding{56} & \ding{56} & \ding{52} & \ding{52} & & \ding{52} \\
\hline
$\{1,5,6\}$   & \ding{56} & \ding{56} & \ding{56} &  & & \\
\hline
$\{1,2,3,6\}$  & \ding{56} &  \ding{52} & \ding{52} & \ding{52} & \ding{52} & \ding{52} \\
\hline
$\{1,2,4,6\}$  & \ding{56} &  & \ding{52} & \ding{52} &  & \ding{52} \\
\hline
$\{1,2,5,6\}$   &  \ding{56} &  &  &  & & \\
\hline
$\{1,3,4,6\}$   & \ding{56} & \ding{56} & \ding{52} & \ding{52} & & \ding{52} \\
\hline
$\{1,3,5,6\}$   & \ding{56} & \ding{56} &  &  & & \\
\hline
$\{1,4,5,6\}$   & \ding{56} & \ding{56} & \ding{52} & \ding{52} & & \ding{52} \\
\hline
$\{1,2,3,4,6\}$   & \ding{52}  &   \ding{52} & \ding{52} & \ding{52} & \ding{52} & \ding{52} \\
\hline
$\{1,2,3,5,6\}$   & \ding{56} & &  &  & & \\
\hline
$\{1,2,4,5,6\}$   & \ding{56} &  & \ding{52}  & \ding{52} & & \ding{52}\\
\hline
$\{1,3,4,5,6\}$   & \ding{56} & \ding{56} & \ding{52} & \ding{52} & & \ding{52}\\
\hline
$\{1,2,3,4,5,6\}$   &  \ding{52} &  \ding{52} & \ding{52} & \ding{52} & \ding{52} & \ding{52}\\
\hline
$\{1,3,5,7\}$   &  \ding{56} &  \ding{56} & \ding{52} & \ding{52} &  & \ding{52} \\
\hline
\end{tabular}
\end{center}

\section*{Acknowledgements}
The authors are very grateful to Bartosz Sobolewski for many valuable comments to the previous version of the paper.


\begin{thebibliography}{a,b,c,d}
\bibitem{BBFS} T. Banakh, A. Bartoszewicz, M. Filipczak, E. Szymonik, Topological and measure properties of some self-similar sets, Topol. Methods Nonlinear Anal., 46(2) (2015), 1013--1028
\bibitem{BBGS} T. Banakh, A. Bartoszewicz, S. G\l \c{a}b, E. Szymonik,  Algebraic and topological properties of some sets in $\ell_1$, Colloq. Math. 129 (2012), 75--85. 
\bibitem{BFS} A. Bartoszewicz, M. Filipczak, E. Szymonik, Multigeometric sequences and Cantorvals, Cent. Eur. J. Math. 12(7) (2014), 1000--1007.
\bibitem{BPW}  W. Bielas, S. Plewik, M. Walczy\'nska, On the center of distances, Eur. J. Math. 4 (2018), no. 2, 687--698.
\bibitem{DJK} Z. Dar\'oczy, A. J\'arai, I. Kat\'ai, Intervallfullende Folgen und volladditive Funktionen, Acta Sci. Math., \textbf{50} (1986), 337-350.
\bibitem{Erdos} P. Erd\"os, M. Horv\'ath, I. Jo\'o, On the uniqueness of the expansions $1=\sum q^{-n_i}$, Acta Math. Hung. 58 (3-4) (1991), 333--342.
\bibitem{F} C. Ferens, On the range of purely atomic probability
measures, Studia Math., \textbf{77(3)} (1984), 261--263.
\bibitem{cardfun} S. G\l\c{a}b, J. Marchwicki, Cardinal functions of purely atomic measures, Results Math. 75:141 (2020).
\bibitem{GN} J.A. Guthrie, J.E. Nymann, The topological structure of the set of subsums of an infinite series, Colloq. Math. 55:2 (1988), 323--327.
\bibitem{Kakeya} S. Kakeya, On the partial sums of an infinite series, T\^{o}hoku Sic. Rep. 3 (1914), 159--164.
\bibitem{MarMis} J. Marchwicki, P. Miska, On Kakeya conditions for achievement sets. Results Math. \textbf{76} (2021), no. 4, Paper No. 181, 22 pp.
\bibitem{Nymann} J. E. Nymann, The sum of the Cantor set with itself, L'Enseignement Math\'ematique, t. 39 (1993), p. 177-178
\bibitem{NS0} J.E. Nymann, R.A. S\'{a}enz, On the paper of Guthrie and Nymann on subsums of infinite series, Colloq. Math. 83 (2000), 1--4.  
\bibitem{WS} A.D. Weinstein, B.E. Shapiro, On the structure of a set of $\overline{\alpha}$-representable numbers, Izv. Vys\v{s}. U\v{c}ebn. Zaved. Matematika. 24 (1980), 8--11. 
\end{thebibliography}
\end{document}